\theoremstyle{plain}
\newtheorem{theorem}{Theorem}[section]
\newtheorem{proposition}[theorem]{Proposition}
\newtheorem{lemma}[theorem]{Lemma}
\newtheorem{assumption}{Assumption}
\theoremstyle{definition}
\newtheorem{definition}[theorem]{Definition}
\theoremstyle{remark}
\newtheorem{remark}[theorem]{Remark}
\newcommand{\VERT}{\vert\kern-0.4ex\vert\kern-0.4ex\vert}
\newcommand{\angles}[2]{\langle #1, #2 \rangle}
\newcommand{\real}{{\mathbb R}}
\newcommand{\com}{{\mathbb C}}
\newcommand{\intgr}{{\mathbb Z}}
\def\BV{\mathrm{BV}}
\def\Xint#1{\mathchoice
	{\XXint\displaystyle\textstyle{#1}}%
	{\XXint\textstyle\scriptstyle{#1}}%
	{\XXint\scriptstyle\scriptscriptstyle{#1}}%
	{\XXint\scriptscriptstyle\scriptscriptstyle{#1}}%
\!\int}
\def\XXint#1#2#3{{\setbox0=\hbox{$#1{#2#3}{\int}$ } \vcenter{\hbox{$#2#3$ }}\kern-.56\wd0}}
\def\dashint{\Xint-}
\definecolor{darkred}{rgb}{0.9,0.1,0.1}
\definecolor{darkblue}{rgb}{0,0,0.7}
\definecolor{darkgreen}{rgb}{0,0.5,0}
\let\eps\varepsilon
\begin{document}

\title{Optimal rate of convergence for stochastic \\ Burgers-type equations}
\author{M. Hairer, K. Matetski}
\institute{Mathematics Department, University of Warwick}
\maketitle

\begin{abstract}
Recently, a solution theory for one-dimensional stochastic PDEs of Burgers type driven by space-time white noise was developed. In particular, it was shown that natural numerical approximations of these equations converge and that their convergence rate in the uniform topology (in probability) is arbitrarily close to $\frac{1}{6}$. In the present article we improve this result in the case of additive noise by proving that the optimal rate of convergence is arbitrarily close to $\frac{1}{2}$.
\end{abstract}


\section{Introduction}
The goal of this article is to study numerical approximations of stochastic PDEs of Burgers type on the circle $\mathbb{T} = \real / (2\pi \intgr)$ given by
\begin{equ}[eq:burgers]
\begin{array}{rcl}
du = \left[\nu \Delta u + F(u) + G(u) \partial_{x} u \right] dt + \sigma d W(t)\;, \qquad u(0) = u^{0}\;.
\end{array}
\end{equ}
Here, $u : \real_+ \times \mathbb{T} \times \Omega \rightarrow \real^n$, where $\left( \Omega, \mathcal{F}, \mathbb{P} \right)$ is a probability space, $\Delta=\partial^2_x$ is the Laplace operator on the circle $\mathbb{T}$, the derivative $\partial_x$ is understood in the sense of distributions, the function $F:\real^n \rightarrow 
\real^n$ is of class $\mathcal{C}^1$, the function $G:\real^n \rightarrow \real^{n \times n}$ is of class $\mathcal{C}^{\infty}$, and $\nu, \sigma \in \real_+$ are positive constants. Finally, $W$ is an $L^2$-cylindrical Wiener process \cite{DPZ}, i.e.\ equation \eqref{eq:burgers} is driven by space-time white noise. The product appearing in the term 
$G(u) \partial_{x} u$ is matrix-vector multiplication.

The difficulty in dealing with \eqref{eq:burgers} comes from the nonlinearity $G(u) \partial_{x} u$ and is caused by the low space-time regularity of the driving noise. Indeed, it is well-known that the pairing 
\begin{equ}
\mathcal{C}^{\alpha} \times \mathcal{C}^\beta \ni (v,u) \mapsto v\, \partial_x u
\end{equ}
is well defined if and only if $\alpha + \beta > 1$ (see Appendix \ref{sec:distributions} and \cite{BCD11}). On the other hand, 
one expects solutions to \eqref{eq:burgers} to have 
the spatial regularity of the solution of the linearised equation
\begin{equ}[eq:linearised]
dX(t) = \nu \Delta X dt + \sigma d W(t)\;.
\end{equ}
For any fixed time $t > 0$, the solution to the stochastic heat equation \eqref{eq:linearised} 
has almost surely H\"{o}lder regularity $\alpha < \frac{1}{2}$, 
but is \textit{not} $\frac{1}{2}$-H\"older continuous (see \cite{Walsh,DPZ,Hai09}). This implies in particular that the product $G(X) \partial_x X$ is not well-defined in this case, and it is not a priori clear how to define a solution to the equation \eqref{eq:burgers}.

In the case $G \equiv 0$ this problem does of course not occur. Equations of this type and their numerical approximations were well studied and the results can be found in \cite{Gyo98b, Gyo99}. Moreover, it was shown in \cite{DG01} that the optimal rate of uniform convergence in this case is $\frac{1}{2} - \kappa$, for every $\kappa > 0$, as the spatial discretisation tends to zero.

For non-zero $G$, the difficulty can be easily overcome in the gradient case, i.e.\ when $G = \nabla \mathcal{G}$ for some smooth function $\mathcal{G} : \real^n \rightarrow \real^n$. In this case, postulating the chain rule, the nonlinear term can be rewritten as
\begin{equ}[e:chainRule]
G\left(u(t,x)\right) \partial_{x} u(t,x) = \partial_{x} \mathcal{G}\left(u(t,x)\right),
\end{equ}
which is a well-defined distribution as soon as $u$ is continuous. 
The existence and uniqueness results in the gradient case can be found in \cite{Gyo98a, DPDT94}. In the 
article \cite{AG06}, the finite difference scheme was studied for the case $G(u)=u$, and $L^2$-convergence was  shown with rate $\gamma$, for every $\gamma < \frac{1}{2}$. The same rate of convergence was obtained in \cite{BJ13} in the $L^\infty$ topology for Galerkin approximations.

For a general sufficiently smooth function $G$, a notion of solution was given in \cite{Hai11a}. The key idea of the approach was to test the nonlinearity with a smooth test function $\varphi$ and to formally rewrite it as
\begin{equ}[eq:test_nonlin]
\int_{-\pi}^{\pi} \varphi(x) G\left(u(t,x)\right) \partial_{x} u(t,x)\, dx = \int_{-\pi}^{\pi} \varphi(x) G\left(u(t,x)\right) d_x u(t,x)\;.
\end{equ}
As it was stated above, we expect $u$ to behave locally like the solution to the linearised equation 
\eqref{eq:linearised}. It was shown in \cite{Hai11a} that the latter can be viewed in a canonical way as
a process with values in a space of rough paths.
This correctly suggests that the theory of controlled rough paths \cite{Gub04, Gub10} 
could be used to deal with the integral \eqref{eq:test_nonlin} in the pathwise sense. 
The quantity \eqref{eq:test_nonlin} is uniquely defined up to a choice of the iterated 
integral which represents the integral of $u$ with respect to itself. This implies that for 
different choices of the iterated integral we obtain different solutions, which is similar to 
the choice between It\^{o} and Stratonovich stochastic integrals in the theory of SDEs.
In the present situation however, there is a unique choice for the iterated integral 
which respects the symmetry of
the linearised equation under the substitution $x \mapsto -x$, and this corresponds to the
``Stratonovich solution''. This natural choice is also the one for which 
the chain rule \eqref{e:chainRule} holds in the particular case when $G$ is a gradient.

Using the rough path approach, numerical approximations to \eqref{eq:burgers} in the gradient case 
without using the chain rule were studied in \cite{HM12}. It was shown that the corresponding approximate solutions converge in suitable Sobolev spaces to a 
limit which solves \eqref{eq:burgers} with an additional correction term, which can be computed 
explicitly. This term is an analogue to the It\^{o}-Stratonovich correction term in the classical 
theory of SDEs.

In \cite{HW10}, the solution theory was extended to Burgers-type equations with 
multiplicative noise (i.e.\ when the multiplier of the noise term is a nonlinear 
local function $\theta(u)$ of the solution). 
Analysis of numerical schemes approximating the equation in the multiplicative case was performed in 
\cite{HMW14}, where the appearance of a correction term was observed and the rate of convergence in 
the uniform topology was shown to be of order $\frac{1}{6}-\kappa$, for every $\kappa > 0$.

In this article, we prove that in the case of additive noise the rate of convergence in 
the supremum norm is $\frac{1}{2}-\kappa$, for every $\kappa > 0$. Actually, it turns out 
to be technically advantageous to consider convergence in H\"{o}lder spaces with
H\"{o}lder exponent very close to zero. The 
main difference to \cite{HMW14} is that we cannot use the classical theory of controlled 
rough paths which applies only in the H\"{o}lder spaces of regularity from 
$\left(\frac{1}{3}, \frac{1}{2}\right]$, to approximate the rough integral 
\eqref{eq:test_nonlin}. To show the convergence in the H\"{o}lder spaces of lower 
regularity, we use the results from \cite{Gub10}, which generalize the theory of controlled rough paths for 
 functions of any positive regularity.


\subsection{Assumptions and statement of the main result}

As before we assume that $F \in \mathcal{C}^1$ and $G \in \mathcal{C}^{\infty}$ in 
\eqref{eq:burgers}. For $\varepsilon > 0$ we consider the approximate stochastic PDEs on the circle $\mathbb{T}$ given by
\begin{equ}[eq:SPDE_approx]
\begin{array}{rcl}
d u_{\varepsilon} = \left[\nu \Delta_{\varepsilon} u_{\varepsilon} + F(u_{\varepsilon}) + G(u_{\varepsilon}) D_{\varepsilon} u_{\varepsilon} \right] dt + \sigma H_{\varepsilon}dW\;, \quad u_{\varepsilon}(0) = u^{0}_{\varepsilon}\;.
\end{array}
\end{equ}
Here, the operators $\Delta_{\varepsilon}$, $D_{\varepsilon}$ and $H_{\varepsilon}$ are defined as Fourier multipliers providing approximations of $\Delta$, $\partial_{x}$ and the identity operator respectively, and are given by
\begin{equs}
\widehat{\Delta_{\varepsilon} u}(k) = -k^{2} m(\varepsilon k) \widehat{u}(k)\;, \enspace \widehat{D_{\varepsilon} u}(k) = i k g(\varepsilon k) \widehat{u}(k)\;, \enspace \widehat{H_{\varepsilon} W}(k) = h(\varepsilon k) \widehat{W}(k)\;.
\end{equs}
Below we provide the assumptions on the functions $m$, $g$ and $h$. We start with the assumptions on $m$.

\begin{assumption}
\label{as:first}
\label{as:second}
The function $m: \real \rightarrow (0,\infty]$ is even, satisfies $f(0)=1$, is continuously differentiable on the interval $[-\delta, \delta]$ for some $\delta > 0$, and there exists $c_{m} \in (0,1)$ such that $m \geq c_{m}$.

Furthermore, the functions $b_t$ given by $b_{t}(x) := \exp\left( -x^{2}m(x)t \right)$
are uniformly bounded in $t > 0$ in the bounded variation norm, i.e. $\sup_{t > 0} |b_{t}|_{\BV} < \infty$.
\end{assumption}

Our next assumption concerns $g$, which defines the approximation to the spatial derivative.

\begin{assumption}
\label{as:third}
There exists a signed Borel measure $\mu$ on $\real$ such that
$\int_{\real} e^{ikx} \mu(dx) = ik g(k)$, and such that
\begin{equ}
\mu(\real)=0\;,\qquad|\mu|(\real) < \infty\;,\qquad\int_{\real} x \mu(dx) = 1\;.
\end{equ}
Moreover, the measure $\mu$ has all finite moments, i.e.
$\int_{\real} |x|^{k} |\mu|(dx) < \infty$,
for any integer $k \geq 1$. 
\end{assumption}

In particular, the approximate derivative can be expressed as 
\[
\left(D_{\varepsilon}u\right)(x)= \frac{1}{\varepsilon} \int_{\real} u(x+\varepsilon y) \mu(dy)\;, 
\]
where we identify $u: \mathbb{T} \rightarrow \real$ with its periodic extension to all $\real$. Our last assumption is on the function $h$, which defines the approximation of noise.

\begin{assumption}
\label{as:fourth}
The function $h$ is even, bounded, and such that $h^{2}/m$ and $h/(m + 1)$ are of bounded variation. Furthermore, $h$ is twice differentiable at the origin with $h(0)=1$ and $h'(0)=0$.
\end{assumption}

The difference with the assumptions in \cite{HMW14} is that we require in Assumption \ref{as:third} all the moments of the measure $\mu$ to be finite and in Assumption \ref{as:fourth} the function $h/(m + 1)$ to be of bounded variation. We use the latter assumption in Lemma \ref{lem:X_X_eps} in order to use the bounds on lifted rough paths obtained in \cite{FGGR13}. All the examples of approximations provided in \cite{HM12} (including finite difference schemes) still satisfy our assumptions.

Let $\bar{u}$ be the solution to the modified equation \eqref{eq:burgers},
\begin{equ}[eq:burgers_modif]
d \bar{u} = \left[\nu \Delta \bar{u} + \bar{F}(\bar{u}) + G(\bar{u}) \partial_{x} \bar{u} \right] dt + \sigma dW\;,\qquad
\bar{u}(0) = u^{0}\;,
\end{equ}
where, for $i = 1, \ldots, n$, the modified reaction term is given by
\begin{equ}
\bar{F}_{i} := F_{i} - \Lambda ~\mathrm{div} G_{i}\;.
\end{equ}
Here, we denote by $G_i$ the $i$th row of the matrix-valued function $G$, and the correction constant is defined by
\begin{equ}
\Lambda := \frac{\sigma^2}{2\pi \nu} \int_{\real_{+}} \int_{\real} \frac{(1-\cos(yt)) h^{2}(t)}{t^{2} m(t)} \mu(dy) dt\;.
\end{equ}
It follows from the assumptions that $\Lambda$ is well-defined. In fact, the Assumption \ref{as:fourth} says that $|h^2 / m|$ is bounded, and by the Assumption \ref{as:third} the measure $\mu$ has a finite second moment, what yields the existence of $\Lambda$.

As we do not assume boundedness of the functions $F$ and $G$, and their derivatives, the solution 
can blow up in finite time. To overcome this difficulty we consider solutions 
only up to some stopping times. More precisely, for any $K > 0$ we define the stopping times
\begin{equ}
\tau^*_K := \inf \{ t > 0: \Vert \bar{u}(t)\Vert_{\mathcal{C}^0} \geq K \}\;,
\end{equ}
where $\Vert \cdot \Vert_{\mathcal{C}^{0}}$ is the supremum norm. The blow-up time of $\bar{u}$ is then defined as $\tau^* := \lim_{K \uparrow \infty} \tau^*_K$ in probability.

Our main theorem gives the convergence rate of the solutions of the approximate equations \eqref{eq:SPDE_approx} to the solution of the modified equation \eqref{eq:burgers_modif}.

\begin{theorem}
\label{thm:first}
Let for every $\eta \in \bigl(0,\frac{1}{2}\bigr]$ the initial values satisfy
\begin{equ}
\mathbb{E} \Vert u^{0}\Vert_{\mathcal{C}^{\eta}} < \infty\;, ~~~\sup_{0 < \varepsilon \leq 1} \mathbb{E} \Vert u_{\varepsilon}^{0}\Vert_{\mathcal{C}^{\eta}} < \infty\;.
\end{equ}
Then, there exists $\alpha_0 > 0$ such that if, for some $\alpha \in (0,\alpha_0]$ and some constant $C > 0$ independent of $\eps$, one has
\begin{equ}
\mathbb{E} \Vert u^{0} - u_{\varepsilon}^{0} \Vert_{\mathcal{C}^{\alpha}} \leq C\varepsilon^{\frac{1}{2} - \alpha}\;,
\end{equ}
then there exists a family of stopping times $\tau_{\varepsilon}$ satisfying $\lim_{\varepsilon \downarrow 0} \tau_{\varepsilon} = \tau^*$ in probability such that
\begin{equ}
\lim_{\varepsilon \downarrow 0} \mathbb{P}\Big[\sup_{t \in [0, \tau_{\varepsilon}]} \Vert \bar{u}(t) - u_{\varepsilon}(t) \Vert_{\mathcal{C}^{0}} \geq \varepsilon^{\frac{1}{2} - \alpha}\Big] = 0\;.
\end{equ}
\end{theorem}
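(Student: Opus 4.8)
The plan is to compare $u_\varepsilon$ and $\bar u$ through their mild formulations and to control the difference by a Gronwall-type argument on a suitable stopping-time interval. First I would split the solutions into their Gaussian parts and remainders: write $u_\varepsilon = X_\varepsilon + v_\varepsilon$ and $\bar u = \bar X + \bar v$, where $X_\varepsilon$ solves the linearised approximate equation $dX_\varepsilon = \nu\Delta_\varepsilon X_\varepsilon\,dt + \sigma H_\varepsilon\,dW$ (started from $0$) and $\bar X$ solves \eqref{eq:linearised}. The linear part is Gaussian, so $\mathbb{E}\,\Vert \bar X(t) - X_\varepsilon(t)\Vert_{\mathcal{C}^\alpha}$ and the corresponding bound on the lifted rough paths can be estimated directly; this is exactly where Assumptions~\ref{as:first}--\ref{as:fourth} (boundedness of $b_t$ in BV, $h^2/m$ and $h/(m+1)$ of bounded variation, all moments of $\mu$ finite) enter, via Lemma~\ref{lem:X_X_eps} and the rough-path bounds of \cite{FGGR13,Hai11a}. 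I expect to get $\mathbb{E}\,\Vert \bar X(t) - X_\varepsilon(t)\Vert_{\mathcal{C}^\alpha} \lesssim \varepsilon^{1/2-\alpha}$ uniformly in $t$ on bounded intervals, together with a matching bound (in the appropriate rough-path metric, using the results of \cite{Gub10} valid for low regularity) on the second-level objects, since these are the correct building blocks for controlling the rough integral \eqref{eq:test_nonlin}.

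Next I would set up the equation for the remainders. Both $v_\varepsilon$ and $\bar v$ satisfy PDEs with right-hand sides that are, respectively, $F(u_\varepsilon) + G(u_\varepsilon)D_\varepsilon u_\varepsilon$ and $\bar F(\bar u) + G(\bar u)\partial_x \bar u$, driven by the smoother-in-time semigroups $e^{\nu t\Delta_\varepsilon}$ and $e^{\nu t\Delta}$. The delicate term is the rough product: I would use the controlled-rough-path structure, with $u_\varepsilon$ controlled by $X_\varepsilon$ and $\bar u$ controlled by $\bar X$, to rewrite $G(u_\varepsilon)D_\varepsilon u_\varepsilon$ and $G(\bar u)\partial_x\bar u$ and then estimate their difference. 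The crucial point — and the source of the correction term $\bar F_i = F_i - \Lambda\,\mathrm{div}\,G_i$ — is that the discrete derivative $D_\varepsilon$ tested against the Gaussian part produces, in the limit, not the naive rough integral but that integral plus the It\^o--Stratonovich-type constant $\Lambda$ times $\mathrm{div}\,G$; this must be isolated as in \cite{HMW14,HM12}, and one shows the discrepancy between $D_\varepsilon$-integral and ($\partial_x$-integral plus $\Lambda$-correction) is of size $\varepsilon^{1/2-\alpha}$. This is the step I expect to be the main obstacle: obtaining the sharp exponent $\tfrac12$ rather than $\tfrac16$ requires exploiting cancellations in the Gaussian computation of the correction (using $h'(0)=0$, $h(0)=1$, $\int x\,\mu(dx)=1$, $\mu(\real)=0$ and finiteness of all moments of $\mu$) far more carefully than the soft controlled-rough-path estimate of \cite{HMW14}, and it forces the use of the low-regularity calculus of \cite{Gub10} because the relevant H\"older exponent $\alpha$ must be taken close to $0$.

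With these ingredients in place I would close the argument by a fixed-point / Gronwall estimate. Define $\tau_\varepsilon$ as the first time that either $\Vert u_\varepsilon(t)\Vert_{\mathcal{C}^0}$ exceeds $2K$ or the relevant rough-path norms of $X_\varepsilon$ exceed a large threshold, minimised with $\tau^*_K$; on $[0,\tau_\varepsilon]$ all nonlinear coefficients $F,\bar F,G$ and their derivatives are bounded, so the map sending the remainder difference to itself via the Duhamel formula is a contraction on a short time interval, and iterating gives a bound $\mathbb{E}\sup_{t\le \tau_\varepsilon}\Vert \bar u(t)-u_\varepsilon(t)\Vert_{\mathcal{C}^\alpha} \lesssim \varepsilon^{1/2-\alpha}$ (absorbing the initial-data discrepancy $\mathbb{E}\Vert u^0-u^0_\varepsilon\Vert_{\mathcal{C}^\alpha}\le C\varepsilon^{1/2-\alpha}$, the linear-part estimate, and the correction-term estimate). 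Since $\mathcal{C}^\alpha \hookrightarrow \mathcal{C}^0$, Markov's inequality then gives $\mathbb{P}[\sup_{t\le\tau_\varepsilon}\Vert\bar u-u_\varepsilon\Vert_{\mathcal{C}^0}\ge \varepsilon^{1/2-\alpha}] \to 0$; finally, standard arguments (using that $u_\varepsilon\to\bar u$ and that $\bar u$ does not explode before $\tau^*$) show $\tau_\varepsilon\to\tau^*$ in probability, completing the proof. The choice $\alpha_0$ is dictated by the requirement that the parabolic smoothing gain $\tfrac12-\alpha$ beats all the losses incurred in the rough-path and Schauder estimates simultaneously.
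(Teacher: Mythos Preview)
Your overall strategy matches the paper's: split into Gaussian part plus remainder, use stopping times to localise, compare mild formulations term by term, and close by a short-time absorption argument followed by Markov's inequality. The paper also uses the three-term decomposition $\bar u = \bar v + X + U$ with $U(t) = S_t(u^0 - X(0))$ (and similarly for $u_\varepsilon$), which cleanly separates the initial-data contribution; your two-term split would work but makes the bookkeeping of the singularity at $t=0$ less transparent, and the paper accordingly works in the weighted norm $\Vert\cdot\Vert_{\mathcal{C}^\alpha_t} + \Vert\cdot\Vert_{\mathcal{C}^1_{(1-\alpha)/2,t}}$ to accommodate this blow-up.

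There is, however, one genuine gap. You refer to ``second-level objects'' as the building blocks for the rough integral. For $\alpha$ close to $0$ this is not enough: with $p = \lfloor 1/\alpha\rfloor$, one needs the full $p$-step lift $\mathbf{X}\colon \mathbb{T}^2 \to T^{(p)}(\real^n)$, and the controlled rough paths must be expanded to order $p-1$. This is precisely what distinguishes the present argument from \cite{HMW14} and is the mechanism that upgrades the rate from $\tfrac16$ to $\tfrac12$. Concretely, the paper does \emph{not} directly compare $G(u_\varepsilon)D_\varepsilon X_\varepsilon$ with the limiting rough integral. Instead it adds and subtracts, for \emph{every} word $w\in\mathcal{A}_{p-1}$, the term $C_w\,D^wG_{ij}(u_\varepsilon)\,\langle D_\varepsilon\mathbf{X}_\varepsilon,e_w\otimes e_j\rangle$, thereby manufacturing an approximate rough integral $Z_\varepsilon$ (whose integrand now has the full order-$(p-1)$ expansion matching \eqref{e:defInt}) plus leftover terms. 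The leftover with $|w|=1$ converges to the $\Lambda\,\mathrm{div}\,G$ correction (via Lemma~\ref{lem:DX_convergence}), while the leftovers with $|w|\ge 2$ vanish like $\varepsilon^{3\alpha_\star -1}$ (Proposition~\ref{prop:U_bar}), since $\langle D_\varepsilon\mathbf{X}_\varepsilon,e_w\rangle$ scales like $\varepsilon^{|w|\alpha_\star -1}$. Without inserting these higher-order terms, $Z_\varepsilon$ and $Z$ would not be close in $\mathcal{C}^\alpha$ for small $\alpha$, because the rough-integral remainder in \eqref{eq:RI_approx_bound} is only of order $|t-s|^{(p+1)\alpha}$ and one needs $p$ large to make this beat the $\varepsilon^{-1}$ from $D_\varepsilon$. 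Your sketch should make this explicit rather than stopping at level two.
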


\begin{remark}
The rate of convergence obtained in \cite{HMW14} was ``almost" $\frac{1}{6}$, in the sense that it is $\frac{1}{6} - \kappa$ for any $\kappa > 0$. To improve this result we consider convergence of the solutions in the H\"{o}lder spaces of the regularities close to zero. This approach creates difficulties when working with the rough integrals \eqref{eq:test_nonlin}. In fact, the bounds on the rough integrals, in particular in \cite[Lemma 5.3]{HMW14}, hold only in the H\"{o}lder spaces $\mathcal{C}^\alpha$ with $\alpha \in \left(\frac{1}{3}, \frac{1}{2}\right)$ and the norms explode as $\alpha$ approaches $\frac{1}{3}$. To have reasonable bounds in the H\"{o}lder spaces of lower regularity, we have to include into the definition of the rough integrals the iterated integrals of the controlling process $X$ of higher order. In \cite{HMW14} it was enough to consider only the iterated integrals of order two. In particular, the smaller $\alpha$ is in Theorem~\ref{thm:first}, the more iterated integrals we have to consider to define the rough integral \eqref{eq:test_nonlin} (see Section~\ref{sec:RP} for more details).

If the function $G$ is only of class $\mathcal{C}^p$ for some $p \geq 3$, we can consider the iterated integrals of $X$ only up to the order $p-1$ (see Subsection \ref{ssec:integrals}). As a consequence, the argument in the proof of Theorem \ref{thm:first} gives the rate of convergence only ``almost" $\frac{1}{2} - \frac{1}{p}$. This is precisely the rate of convergence obtained in \cite{HMW14}, where $p$ was taken to be $3$.
\end{remark}

\begin{remark}
If the functions $\bar F$ and $G$ are bounded together will all of their derivatives, the solution $\bar u$ is global, i.e. $\tau^* = +\infty$ a.s., see \cite[Thm~3.6]{Hai11a}. 
In this case, the argument of the proof of Theorem~\ref{thm:first} shows that 
one can take for example $\tau_\varepsilon = T$ a.s. for any fixed time $T>0$. Since it is straightforward in this
case to obtain uniform bounds (on finite time intervals) on the $p$th moment of the solution for every $p$,
uniformly over $\eps \in (0,1]$, this implies strong $L^p$-convergence of the approximate solutions on every 
bounded time interval. In general, we do expect to have $\tau^* < \infty$, which of course precludes any
form of $L^p$ convergence without cutoffs.
\end{remark}

\begin{remark}
By changing the time variable and the functions in \eqref{eq:burgers} by a constant multiplier, we can obtain an equivalent equation with $\nu = 1$. Moreover, we can assume $\sigma = 1$. In what follows we only consider these values of the constants.
\end{remark}


\subsection{Structure of the article}
In Section \ref{sec:RP} we review the theories of rough paths and controlled rough paths. Section \ref{sec:notion} is devoted to the results obtained in \cite{Hai11a}. In particular, here we provide a notion of solution and the existence and uniqueness results for the Burgers type equations with additive noise. In Section \ref{sec:approx} we define the rough integrals and formulate the mild solution to the approximate equation \eqref{eq:SPDE_approx} in a way appropriate for working in the H\"{o}lder spaces of low regularity. The proof of Theorem \ref{thm:first} is provided in Section \ref{sec:convergence}. The following sections give bounds on the corresponding terms in the equations \eqref{eq:burgers_modif} and \eqref{eq:SPDE_approx}: in Sections \ref{sec:reaction} and \ref{sec:correction} we consider the reaction terms and Section \ref{sec:rough_int} is devoted to the terms involving the rough integrals. In Appendix \ref{sec:distributions} we prove a Kolmogorov-like criterion for distribution-valued processes. Appendix \ref{appx:heat_sg} provides regularity properties of the heat semigroup and its approximate counterpart on the H\"{o}lder spaces.


\subsection{Spaces, norms and notation}

Throughout this article, we denote  by $\mathcal{C}^0$  the space of continuous functions on the circle $\mathbb{T}$ endowed with the supremum norm. 

For functions $X: \real \rightarrow \real^n$ (or $\real^{n \times n}$) and $R: \real^2 \rightarrow \real^n$ (or $\real^{n \times n}$), such that $R$ vanishes on the diagonal, we define respectively H\"{o}lder seminorms with a given parameter $\alpha \in (0,1)$:
\begin{equ}
\Vert X \Vert_{\alpha} := \sup_{x \ne y} \frac{|X(x) - X(y)|}{|x - y|^{\alpha}}\;, \qquad \Vert R \Vert_{\alpha} := \sup_{x \ne y} \frac{|R(x,y)|}{|x - y|^{\alpha}}\;.
\end{equ}
By $\mathcal{C}^\alpha$ and $\mathcal{B}^\alpha$ respectively we denote the spaces of functions for which these seminorms are finite. Then $\mathcal{C}^\alpha$ endowed with the norm $\Vert \cdot \Vert_{\mathcal{C}^\alpha} = \Vert \cdot \Vert_{\mathcal{C}^0} + \Vert \cdot \Vert_{\alpha}$ is a Banach space. $\mathcal{B}^\alpha$ is a Banach space endowed with $\Vert \cdot \Vert_{\mathcal{B}^\alpha} = \Vert \cdot \Vert_{\alpha}$. 

The H\"{o}lder space $\mathcal{C}^\alpha$ of regularity $\alpha \geq 1$ consists of $\lfloor \alpha \rfloor$ times continuously differentiable functions whose $\lfloor \alpha \rfloor$-th derivative is $(\alpha - \lfloor \alpha \rfloor)$-H\"{o}lder continuous.
For $\alpha < 0$ we denote by $\mathcal{C}^\alpha$ the Besov space $\mathcal{B}^\alpha_{\infty, \infty}$ (see Appendix~\ref{sec:distributions} for the definition).

We also define space-time H\"{o}lder norms, i.e. for some $T > 0$ and functions $X:[0,T] \times \mathbb{T} \rightarrow \real^n$ (or $\real^{n \times n}$) and $R:[0,T] \times \mathbb{T}^2 \rightarrow \real^n$ (or $\real^{n \times n}$), any $\alpha \in \real$ and any $\beta > 0$ we define
\begin{equ}[eq:C_B_norms]
\Vert X \Vert_{\mathcal{C}^{\alpha}_{T}} := \sup_{s \in [0,T]} \Vert X(s) \Vert_{\mathcal{C}^{\alpha}}\;,\quad \Vert R \Vert_{\mathcal{B}^{\beta}_{T}} := \sup_{s \in [0,T]} \Vert R(s) \Vert_{\mathcal{B}^{\beta}}\;.
\end{equ}
We denote by $\mathcal{C}^{\alpha}_{T}$ and $\mathcal{B}^{\alpha}_{T}$ respectively the spaces 
of functions/distributions for which the norms \eqref{eq:C_B_norms} are finite. Furthermore, 
in order to deal with functions $X$ exhibiting a blow-up with rate $\eta >0$ near $t=0$, we define the norm
\[
\Vert X \Vert_{\mathcal{C}^{\alpha}_{\eta, T}} := \sup_{s \in (0,T]} s^{\eta} \Vert X(s) \Vert_{\mathcal{C}^{\alpha}}\;.
\]
Similarly to above, we denote by $\mathcal{C}^{\alpha}_{\eta, T}$ the space of functions/distributions 
for which this norm is finite.

By $\Vert \cdot \Vert_{\mathcal{C}^\alpha \rightarrow \mathcal{C}^\beta}$ we denote the operator norm of a linear map acting from the space $\mathcal{C}^\alpha$ to $\mathcal{C}^\beta$.
When we write $x \lesssim y$, we mean that there is a constant $C$, independent of the relevant quantities, such that $x \leq Cy$.

\subsection*{Acknowledgements}

We would like to thank H. Weber for numerous discussions of this and related problems.
MH's research was funded by the Philip Leverhulme trust through a leadership award, 
by the Royal Society through a research merit award, and by the ERC through a consolidator award.


\section{Elements of rough path theory}
\label{sec:RP}

In this section we provide an overview of rough path theory and controlled rough paths. For more information on rough paths theory we refer to the original article \cite{Lyo98} and to the monographs \cite{LQ02,LCL07,FV10,Book}.

One of the aims of rough paths theory is to provide a consistent and robust way
of defining the integral
\begin{equ}[eq:integral_Y_X]
\int_s^t Y(r) \otimes dX(r)\;,
\end{equ}
for processes $Y, X \in \mathcal{C}^\alpha$ with any H\"{o}lder exponent $\alpha \in \left(0,\frac{1}{2}\right]$. If $\alpha > \frac{1}{2}$, then the integral can be defined in Young's sense \cite{You36} as the limit of Riemann sums. If $\alpha \leq \frac{1}{2}$, 
however, the Riemann sums may diverge (or fail to converge to a limit independent of the partition) and the integral cannot be defined in this way. Given $X \in \mathcal{C}^\alpha$ with
$\alpha \in \left(0, \frac{1}{2}\right]$, the theory of 
(controlled) rough paths allows to define \eqref{eq:integral_Y_X} in a consistent way
for a certain class of integrands $Y$. To this end however, one has to consider not 
only the processes $X$ and $Y$, but suitable additional ``higher order'' information.

We fix $0 < \alpha \leq \frac{1}{2}$ and $p = \lfloor 1 / \alpha \rfloor$ to be the largest integer such that $p \alpha \leq 1$. We then define the $p$-step truncated tensor algebra
\begin{equ}
T^{(p)}\big(\real^n\big) := \bigoplus_{k=0}^{p} \big(\real^n\big)^{\otimes k}\;,
\end{equ}
whose basis elements can be labelled by words of length not exceeding $p$ 
(including the empty word), based on the 
alphabet $\mathcal{A} = \{1, \ldots, n\}$. We denote this set of words by $\mathcal{A}_p$. Then the correspondence $\mathcal{A}_p \to T^{(p)}(\real^n)$ is given by $w \mapsto e_w$ with $e_w = e_{w_1} \otimes \ldots \otimes e_{w_k}$, for $w = w_1 \ldots w_k$ and $e_{\emptyset} = 1 \in \big(\real^n\big)^{\otimes 0}\approx \real$, where $\{e_{i}\}_{i \in \mathcal{A}}$ is the canonical basis of $\real^n$.

There is an operation $\shuffle$, called shuffle product \cite{Reu93}, defined on the free algebra generated by $\mathcal{A}$. For any two words the shuffle product gives all the possible ways of interleaving them in the ways that preserve the original order of the letters. For example, if $a$, $b$ and $c$ are letters from $\mathcal{A}$, then one has the identity
\begin{equ}
ab \shuffle ac = abac + 2aabc + 2aacb + acab\;.
\end{equ}
We also define both the shuffle and the concatenation product of two elements from $T^{(p)}\big(\real^n\big)$, i.e. for any two words $w, \bar{w} \in \mathcal{A}_p$ we define
\begin{equ}
e_{w} \shuffle e_{\bar{w}} := e_{w \shuffle \bar{w}}\;,\qquad
e_{w} \otimes e_{\bar{w}} := e_{w \bar{w}}\;,
\end{equ}
if the sums of the lengths of the two words do not exceed $p$ and $e_{w} \shuffle e_{\bar{w}} = e_w \otimes e_{\bar w} = 0$ otherwise. This is extended to all of $T^{(p)}\big(\real^n\big)$ by linearity.
With these notations at hand, we give the following definition:

\begin{definition}
\label{def:RP}
A {\it geometric rough path} of regularity $\alpha \in \left(0, \frac{1}{2}\right]$ is a map $\mathbf{X} : \real^2 \rightarrow T^{(p)}\big(\real^n\big)$, where as above $p = \lfloor 1 / \alpha \rfloor$, such that
\begin{enumerate}
\item $\angles{\mathbf{X}(s,t)}{e_w \shuffle e_{\bar w}} = \angles{\mathbf{X}(s,t)}{e_w} \angles{\mathbf{X}(s,t)}{e_{\bar w}}$, for any $w, \bar w \in \mathcal{A}_p$ with $|w| + |\bar w| \le p$,
\item $\mathbf{X}(s,t) = \mathbf{X}(s,u) \otimes \mathbf{X}(u,t)$, for any $s, u, t \in \real$,
\item $\Vert \angles{\mathbf{X}}{e_w} \Vert_{\mathcal{B}^{\alpha |w|}} < \infty$, for any word $w \in \mathcal{A}_p$ of length $|w|$.
\end{enumerate}
\end{definition}

If we define $X^i(t) := \angles{\mathbf{X}(0,t)}{e_i}$ for any $i \in \mathcal{A}$, then the components of $\mathbf{X}(s,t)$ of higher order should be thought of as defining the iterated integrals
\begin{equ}[eq:iter_int]
\angles{\mathbf{X}(s,t)}{e_w} =: \int_{s}^{t} \ldots \int_{s}^{r_2} dX^{w_1}(r_1) \ldots dX^{w_k}(r_k)\;,
\end{equ}
for $w=w_1 \ldots w_k \in \mathcal{A}_p$. Of course, the integrals on the right hand side 
of \eqref{eq:iter_int} are not defined, as mentioned at the start of this section. 
Hence, for a given rough path $\mathbf{X}$, then the left hand side of 
\eqref{eq:iter_int} is the definition of the right hand side.

The conditions in Definition \ref{def:RP} ensure that the quantities \eqref{eq:iter_int} behave like iterated integrals. In particular, if $X$ is a smooth function and we define $\mathbf{X}$ by \eqref{eq:iter_int} in Young's sense, then $\mathbf{X}$ satisfies the conditions
of Definition~\ref{def:RP}, as was shown in \cite{Che54}. In particular, if $x = e_i$ and $y = e_j$, for any two letters $i, j \in \mathcal{A}$, then the first property gives
\begin{equ}
\angles{\mathbf{X}(s,t)}{e_i \otimes e_j} + \angles{\mathbf{X}(s,t)}{e_j \otimes e_i} = X^i(s,t) X^j(s,t)\;,
\end{equ}
where we write $X^i(s,t) := X^i(t) - X^i(s)$. This is the usual integration by parts formula. The second condition of Definition \ref{def:RP} provides the additivity property of the 
integral over consecutive intervals.

Given an $\alpha$-regular rough path $\mathbf{X}$, we define the following quantity
\begin{equ}[e:RPNorm]
\VERT \mathbf{X} \VERT_{\alpha} := \sum_{w \in \mathcal{A}_p \setminus \{\emptyset\}} \Vert \angles{\mathbf{X}}{e_w} \Vert_{\mathcal{B}^{\alpha |w|}}\;.
\end{equ}


\subsection{Controlled rough paths}

The theory of controlled rough paths was introduced in \cite{Gub04} for geometric rough paths of H\"{o}lder regularity from $\big(\frac{1}{3}, \frac{1}{2} \big]$. In \cite{Gub10}, 
the theory was generalised to rough paths of arbitrary positive regularity.

\begin{definition}
\label{def:RP_control}
Given $\alpha \in \bigl(0, \frac{1}{2}\bigr]$, $p = \lfloor 1/\alpha \rfloor$, a geometric rough path $\mathbf{X}$ of regularity $\alpha$, and a function $Y: \real \rightarrow \big(T^{(p-1)}\big(\real^n\big)\big)^*$ (the dual of the truncated tensor algebra), we say that $Y$ is {\it controlled} by $\mathbf{X}$ if, for every word $w \in \mathcal{A}_{p-1}$, one has the bound
\begin{equ}
|\angles{Y(t)}{e_w} - \angles{Y(s)}{\mathbf{X}(s,t) \otimes e_w}| \leq C |t-s|^{(p-|w|)\alpha}\;,
\end{equ}
for some constant $C > 0$.
\end{definition}

An alternative statement of Definition~\ref{def:RP_control} is that for every word $w \in \mathcal{A}_{p-1}$ there exists a function $R_Y^w \in \mathcal{B}^{(p-|w|)\alpha}$ such that
\begin{equ}[eq:Y_expansion]
\angles{Y(t)}{e_w} = \sum_{\bar{w} \in \mathcal{A}_{p-|w|-1}} \angles{Y(s)}{e_{\bar{w}} \otimes e_w} \angles{\mathbf{X}(s,t)}{e_{\bar{w}}} + R_Y^w(s,t)\;.
\end{equ}
Given an $\alpha$-regular geometric rough path $\mathbf{X}$, we then endow
the space of all controlled paths $Y$ with the semi-norm 
\[
\Vert Y \Vert_{\mathcal{C}^\alpha_{\mathbf{X}}} := \sum_{w \in \mathcal{A}_{p-1}} \Vert \angles{Y}{ e_w} \Vert_{\mathcal{C}^\alpha} + \sum_{w \in \mathcal{A}_{p-2}} \Vert R_Y^w \Vert_{\mathcal{B}^{(p-|w|)\alpha}}\;.
\]

Given a rough path $Y$ controlled by $\mathbf{X}$, one can define the integral \eqref{eq:integral_Y_X} by
\begin{equ}[eq:contr_int]
\dashint_{s}^t Y(r)\, dX^i(r) := \lim_{|\mathcal{P}| \rightarrow 0} \sum_{[u,v] \in \mathcal{P}} \Xi_{i}(u,v)\;,
\end{equ}
where we denoted $X^i(t) := \angles{\mathbf{X}(0,t)}{e_i}$ for $i \in \mathcal{A}$, and
\begin{equ}[e:defInt]
\Xi_{i}(u,v) := \sum_{w \in \mathcal{A}_{p-1}} \angles{Y(u)}{e_w} \angles{\mathbf{X}(u,v)}{e_w \otimes e_i}\;.
\end{equ}
Here, the limit is taken over a sequence of partitions $\mathcal{P}$ of the interval $[s,t]$, whose diameters $|\mathcal{P}|$ tend to $0$. It was proved in \cite[Theorem 8.5]{Gub10} that the rough integral \eqref{eq:contr_int} is well defined, i.e. the limit in \eqref{eq:contr_int} exists and is independent of the choice of partitions $\mathcal{P}$. 

If every coordinate $Y^j$ of the process $Y$ is controlled by $\mathbf{X}$, then we denote the rough integral of $Y$ with respect to $X$ by
\[
\left(\dashint_{s}^t Y(r) \otimes dX(r)\right)_{ij} := \dashint_{s}^t Y^j(r)\, dX^i(r)\;.
\]

We use the symbol $\dashint$ for the rough integral in \eqref{eq:contr_int}, in order to remind the abuse of notation, since the integral depends not only on $X^i$ and $Y^j$, but on much more information contained in $\mathbf{X}$ and $Y$.
In the following proposition we provide several bounds on the rough integrals.

\begin{proposition}
\label{prop:RP_integrals}
Let $Y$ be controlled by a geometric rough path $\mathbf{X}$ of regularity $\alpha \in \left(0, \frac{1}{2}\right]$. Then there is a constant $C$, independent of $Y$ and $\mathbf{X}$, such that 
\begin{equs}
\Big| \dashint_s^t Y(r) \otimes dX(r) - \Xi(s,t) \Big| &\leq C \VERT \mathbf{X} \VERT_{\alpha} \Vert Y \Vert_{\mathcal{C}^\alpha_{\mathbf{X}}} |t-s|^{\alpha (p+1)}\;, \label{eq:RI_approx_bound}\\
\Big\Vert \dashint_s^\cdot Y(r) \otimes dX(r) \Big\Vert_{\alpha} &\leq C \VERT \mathbf{X} \VERT_{\alpha} \Vert Y \Vert_{\mathcal{C}^\alpha_{\mathbf{X}}}\;.\label{eq:RI_bound}
\end{equs}

Moreover, if $\bar{Y}$ is controlled by another rough path $\mathbf{\bar{X}}$ of regularity $\alpha$, then there is a constant $C$, independent of $\mathbf{X}$, $\mathbf{\bar{X}}$, $Y$ and $\bar{Y}$, such that
\begin{equs}
\Big\Vert \dashint_s^\cdot Y(r) \otimes dX(r) - \dashint_s^\cdot \bar{Y}(r) &\otimes d \bar{X}(r) \Big\Vert_{\alpha} 
\leq C \VERT \mathbf{X} - \mathbf{\bar{X}} \VERT_{\alpha} \left(\Vert Y \Vert_{\mathcal{C}^\alpha_{\mathbf{X}}} + \Vert \bar{Y} \Vert_{\mathcal{C}^\alpha_{\mathbf{\bar{X}}}} \right) \\
&+ C \left( \VERT \mathbf{X} \VERT_{\alpha} + \VERT \mathbf{\bar{X}} \VERT_{\alpha} \right) \Vert Y, \bar{Y} \Vert_{\mathcal{C}^\alpha_{\mathbf{X}, \mathbf{\bar{X}}}},\label{eq:RI_two_bound}
\end{equs}
where we have used the quantity
\[
\Vert Y, \bar{Y} \Vert_{\mathcal{C}^\alpha_{\mathbf{X}, \mathbf{\bar{X}}}} := \sum_{w \in \mathcal{A}_{p-1}} \Vert \angles{Y}{e_w} - \angles{\bar{Y}}{ e_w} \Vert_{\mathcal{C}^\alpha} + \sum_{w \in \mathcal{A}_{p-2}} \Vert R_Y^w - R_{\bar{Y}}^w \Vert_{\mathcal{B}^{(p-|w|)\alpha}}\;.
\]
\end{proposition}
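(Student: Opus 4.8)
The plan is to establish the two single-path bounds \eqref{eq:RI_approx_bound} and \eqref{eq:RI_bound} first, and then to promote the argument to the two-path difference estimate \eqref{eq:RI_two_bound} by carefully tracking how each step depends linearly on the data. All three estimates rest on the same engine, namely the convergence proof of \cite[Theorem 8.5]{Gub10} and its quantitative ``sewing'' content. Concretely, I would set $\Xi_i(u,v)$ as in \eqref{e:defInt} and show that the increments $\delta\Xi_i(u,m,v) := \Xi_i(u,v) - \Xi_i(u,m) - \Xi_i(m,v)$ satisfy $|\delta\Xi_i(u,m,v)| \lesssim \VERT\mathbf{X}\VERT_\alpha \Vert Y\Vert_{\mathcal{C}^\alpha_{\mathbf{X}}} |v-u|^{(p+1)\alpha}$ with $(p+1)\alpha > 1$; this is the defect computation, using Chen's relation $\mathbf{X}(u,v) = \mathbf{X}(u,m)\otimes\mathbf{X}(m,v)$ (the second axiom of Definition~\ref{def:RP}) together with the controlled-path expansion \eqref{eq:Y_expansion} for each $\angles{Y}{e_w}$, so that the terms of order $\le p$ telescope and only the remainders $R_Y^w$ of order $(p-|w|)\alpha$ multiplied by $\angles{\mathbf{X}(\cdot,\cdot)}{e_{\bar w}\otimes e_i}$ of order $(|\bar w|+1)\alpha$ survive, each such product being of order $> 1$. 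Once this defect bound is in place, the abstract sewing lemma gives existence of the limit in \eqref{eq:contr_int} and, more importantly, the quantitative gap $|\dashint_u^v Y\,dX^i - \Xi_i(u,v)| \lesssim \VERT\mathbf{X}\VERT_\alpha \Vert Y\Vert_{\mathcal{C}^\alpha_{\mathbf{X}}} |v-u|^{(p+1)\alpha}$, which is exactly \eqref{eq:RI_approx_bound}.

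For \eqref{eq:RI_bound} I would use \eqref{eq:RI_approx_bound} to reduce the H\"older seminorm of $s \mapsto \dashint_s^\cdot Y\,dX$ to that of the ``local'' approximation $\Xi$: writing $\dashint_s^t = \Xi(s,t) + O(|t-s|^{(p+1)\alpha})$ and bounding $|\Xi(s,t)|$ directly from \eqref{e:defInt} by $\sum_w \Vert\angles{Y}{e_w}\Vert_{\mathcal{C}^0}\Vert\angles{\mathbf{X}}{e_w\otimes e_i}\Vert_{\mathcal{B}^{(|w|+1)\alpha}}|t-s|^{(|w|+1)\alpha} \lesssim \VERT\mathbf{X}\VERT_\alpha\Vert Y\Vert_{\mathcal{C}^\alpha_{\mathbf{X}}}|t-s|^\alpha$ since $(|w|+1)\alpha \ge \alpha$, and since $(p+1)\alpha \ge \alpha$ as well the error term has the right homogeneity; combining gives \eqref{eq:RI_bound}.

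For the difference bound \eqref{eq:RI_two_bound} the strategy is to repeat the defect computation for $\Xi_i - \bar\Xi_i$ and for $\dashint - \dashint$, splitting every product $a b - \bar a\bar b = (a-\bar a)b + \bar a(b - \bar b)$ so that each resulting term carries either a factor $\VERT\mathbf{X}-\bar{\mathbf{X}}\VERT_\alpha$ times a norm of $Y$ or $\bar Y$, or a factor $\Vert Y,\bar Y\Vert_{\mathcal{C}^\alpha_{\mathbf{X},\bar{\mathbf{X}}}}$ times $\VERT\mathbf{X}\VERT_\alpha$ or $\VERT\bar{\mathbf{X}}\VERT_\alpha$; one also needs to control $R_Y^w - R_{\bar Y}^w$, which is why these remainders for $|w| \le p-2$ appear in the definition of $\Vert Y,\bar Y\Vert_{\mathcal{C}^\alpha_{\mathbf{X},\bar{\mathbf{X}}}}$. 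Feeding these differenced defects into the (linear, hence difference-stable) sewing estimate yields the gap bound for $(\dashint - \Xi) - (\dashint - \bar\Xi)$, and then the same $\Xi$-versus-$\bar\Xi$ decomposition as in the previous paragraph converts this into the stated $\mathcal{C}^\alpha$-seminorm bound. I expect the main obstacle to be purely bookkeeping rather than conceptual: organizing the telescoping in the defect computation so that every surviving term is genuinely of homogeneity strictly greater than $1$ (this is where the choice to include iterated integrals up to order $p$ and remainders $R_Y^w$ of the precise order $(p-|w|)\alpha$ is essential), and, in the two-path case, making sure the Leibniz-type splitting is done uniformly so that no cross term is left with the ``wrong'' pairing of a difference against a raw norm. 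A minor technical point is that one should invoke the sewing/convergence statement of \cite[Theorem 8.5]{Gub10} in a form that already outputs the quantitative remainder estimate with the correct exponent $(p+1)\alpha$; if only bare convergence is cited there, one reproves the elementary sewing bound, which is standard.
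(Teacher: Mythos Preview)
Your proposal is correct and follows exactly the sewing-lemma strategy that underlies \cite[Theorem 8.5, Proposition 6.1]{Gub10}; the paper's own proof consists solely of a citation to those two results, so you have in effect sketched the argument that the paper delegates to the reference. The only difference is one of presentation: the paper treats the proposition as a direct consequence of Gubinelli's theorems, while you outline the defect computation and Leibniz splitting explicitly---but the content is the same.
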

\begin{proof}
The bounds follow from \cite[Theorem 8.5, Proposition 6.1]{Gub10}.
\end{proof}

\begin{remark}
The notation $\VERT \mathbf{X} - \mathbf{\bar{X}} \VERT_{\alpha}$ is a slight abuse of
notation since $\mathbf{X} - \mathbf{\bar{X}}$ is not a rough path in general. The
definition \eqref{e:RPNorm} does however make perfect sense for the difference.
\end{remark}

In fact, the article \cite{Gub10} gives more precise bounds on the rough integrals than those provided in Proposition \ref{prop:RP_integrals}, but we prefer to have them in this form for the sake of conciseness.


\section{Definition and well-posedness of the solution}
\label{sec:notion}

Let us now give a short discussion of what we mean by ``solutions'' to
\eqref{eq:burgers}, as introduced in \cite{Hai11a}.
The idea is to find a process $X$ such that $v = u - X$ is of class $\mathcal{C}^1$ (in space), so that the definition of the integral \eqref{eq:test_nonlin} boils down to defining the integral
\begin{equ}
\int_{-\pi}^{\pi} \varphi(x) G\left(u(t,x)\right) d_x X(t,x)\;.
\end{equ}
If we have a canonical way of lifting $X$ to a rough path $\mathbf{X}$, 
this integral can be interpreted in the sense of rough paths.

A natural choice for $X$ is the solution to the linear stochastic heat equation. In order to get nice properties for this process, we build it in a slightly different way from \cite{Hai11a}. First, we define the stationary solution to the modified SPDE on the circle $\mathbb{T}$,
\begin{equ}[eq:Y]
d Y = \Delta Y dt + \Pi dW\;,
\end{equ}
where $\Pi$ denotes the orthogonal projection in $L^{2}$ onto the space of functions with zero mean. In particular, if we extend the cylindrical Brownian motion $W$ to whole $\mathbb{R}$ in time, then
\begin{equ}
 Y(t) = \int_{-\infty}^t S_{t-s}\, \Pi dW(s)\;,
\end{equ}
where $S$ is the heat semigroup defined below. Second, we define for all $(t,x) \in \mathbb{R}_+ \times \mathbb{T}$ the process
\begin{equ}[eq:X]
X(t,x) := Y(t,x) + \frac{1}{\sqrt{2\pi}} w^0(t)\;,
\end{equ}
where $w^0$ if the zeroth Fourier mode of $W$, i.e. $w^0$ is a Brownian motion.

\begin{remark}
We need to use $\Pi$ in \eqref{eq:Y} in order to obtain a stationary solution. In \cite{Hai11a}, the author used instead the stationary solution to $
d X = \Delta X dt - X dt + dW$ as a reference path. 
Our choice of $X$ was used in \cite{HMW14} and does not change the results of \cite{Hai11a}.
\end{remark}

The following lemma shows that there is a natural way to extend $X$ to a rough path.

\begin{lemma}\label{lem:lift}
For every $\frac{1}{3} < \alpha < \frac{1}{2}$, the stochastic process $X$ can be canonically lifted to a process $\mathbf{X}: \real \times \mathbb{T}^2 \rightarrow T^{(2)}\big(\real^n\big)$, such that for every fixed $t \in \real$, the process $\mathbf{X}(t)$ is a geometric $\alpha$-rough path.
\end{lemma}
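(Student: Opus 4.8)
The plan is to reduce the statement to the spatial behaviour of the stationary process $Y$ at a fixed time, construct $\mathbf{X}(t)$ as the limit of the canonical (Riemann--Stieltjes) lifts of smooth Fourier approximations, and control the limit by a Kolmogorov-type criterion for rough paths.

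First, fix $t \in \real$ and observe that $X(t,x) - X(t,y) = Y(t,x) - Y(t,y)$, since the term $\frac{1}{\sqrt{2\pi}} w^0(t)$ in \eqref{eq:X} is constant in $x$. A rough path lift depends only on increments, so the lift of $X(t,\cdot)$ is forced to coincide with that of $Y(t,\cdot)$, and it suffices to lift the latter. Expanding $Y(t,\cdot)$ in Fourier modes, $Y^i(t,x) = \sum_{k\neq 0}\hat{Y}^i_k(t)\, e_k(x)$, the stationarity of \eqref{eq:Y} gives $\mathbb{E}|\hat{Y}^i_k(t)|^2 \lesssim k^{-2}$ \emph{uniformly in $t$}, and the $n$ coordinates $Y^1,\dots,Y^n$ are independent because the components of $W$ are. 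A direct computation then yields $\mathbb{E}|Y^i(t,x) - Y^i(t,y)|^2 \lesssim |x-y|$ uniformly in $t$, so by Gaussianity and the classical Kolmogorov criterion $Y^i(t,\cdot) \in \mathcal{C}^\alpha$ almost surely for every $\alpha < \tfrac12$. This takes care of the words of length one in Definition~\ref{def:RP}. (In fact $z\mapsto Y^j(t,z)$ has a tent-plus-smooth covariance, hence is a space-Markov Gaussian semimartingale in $z$, which is what makes the stochastic integrals below meaningful.)

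For the second level, let $Y_N(t,\cdot)$ denote the truncation of $Y(t,\cdot)$ to Fourier modes $|k|\le N$, which is smooth in space, and let $\mathbf{X}^{(N)}(t)$ be its canonical lift, so that $\angles{\mathbf{X}^{(N)}(t)(x,y)}{e_{ij}} = \int_x^y \bigl(Y^i_N(t,z) - Y^i_N(t,x)\bigr)\,dY^j_N(t,z)$ in the Riemann--Stieltjes sense. By \cite{Che54} each $\mathbf{X}^{(N)}(t)$ satisfies the shuffle and Chen relations of Definition~\ref{def:RP}, with in particular the diagonal entries equal to $\tfrac12\bigl(Y^i_N(t,y)-Y^i_N(t,x)\bigr)^2$. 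The core estimate is to show, uniformly in $t$ and $N$,
\begin{equ}
\mathbb{E}\bigl|\angles{\mathbf{X}^{(N)}(t)(x,y)}{e_{ij}}\bigr|^{2} \lesssim |x-y|^{2}\;,
\end{equ}
together with a summable Cauchy estimate for $\mathbb{E}|\angles{(\mathbf{X}^{(N)}-\mathbf{X}^{(M)})(t)(x,y)}{e_{ij}}|^2$ as $N,M\to\infty$. Both are explicit Gaussian computations: expanding in Fourier series and applying Wick's theorem, the independence of the distinct components $Y^i$ and $Y^j$ kills every contraction except the single ``cross'' pairing, leaving an absolutely convergent double sum over Fourier modes that is bounded by $|x-y|^2$ and whose tail is summable in the truncation level; for $i=j$ the entry is a polynomial in the first level and is handled directly. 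Since these level-two quantities lie in a fixed homogeneous Wiener chaos, the $L^2$-bounds upgrade to $L^q$-bounds for all $q<\infty$, so the Kolmogorov criterion for rough paths (see \cite{FV10,Book}) applies and gives convergence of $\mathbf{X}^{(N)}(t)$ in probability, in the metric induced by $\VERT\cdot\VERT_\alpha$, to a limit $\mathbf{X}(t)$ with $\Vert \angles{\mathbf{X}(t)}{e_{ij}}\Vert_{\mathcal{B}^{2\alpha}} < \infty$ for every $\alpha<\tfrac12$. The off-diagonal entries of $\mathbf{X}(t)$ are then the genuine stochastic iterated integrals of the components of $Y$ (Stratonovich and It\^o coincide here by independence), the diagonal entries are $\angles{\mathbf{X}(t)(x,y)}{e_{ii}} = \tfrac12(Y^i(t,y)-Y^i(t,x))^2$, and this is precisely the lift that is invariant under $x\mapsto -x$, hence ``canonical''. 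The algebraic relations of Definition~\ref{def:RP} are closed under limits in $\VERT\cdot\VERT_\alpha$, so $\mathbf{X}(t)$ is a geometric $\alpha$-rough path; running the same bounds jointly in $(t,x,y)$ (or simply taking the almost sure limit for each $t$) produces a jointly measurable map $\mathbf{X}:\real\times\mathbb{T}^2 \to T^{(2)}(\real^n)$.

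The main obstacle is the uniform-in-time second-moment analysis of the stochastic area of the pair $(Y^i,Y^j)$ and of its Fourier truncations: one must extract the exact scaling $|x-y|^{2}$ (equivalently, H\"older regularity $2\alpha$ for all $\alpha<\tfrac12$) from the Fourier representation and, crucially, exploit the independence of the two components so that only the benign cross-contraction survives Wick's theorem, with all constants uniform over $t$ — the latter being guaranteed by the $t$-independence of the modal variances $\mathbb{E}|\hat{Y}^i_k(t)|^2$. By contrast, the algebraic (geometricity) statements and the length-one regularity are routine.
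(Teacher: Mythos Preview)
Your proposal is correct. The paper itself does not give a proof of this lemma: it simply explains what ``canonical'' means (smooth Gaussian approximations $X_\varepsilon$ have iterated integrals converging in $L^2$ to those of $\mathbf{X}$) and refers to \cite{FV10} for both the precise definition and the proof. Your argument is therefore more explicit than the paper's, but it is exactly the mechanism that underlies the cited result: approximate by smooth (here, Fourier-truncated) Gaussians, obtain uniform second-moment bounds on the level-two increments via Wick's theorem exploiting independence of components, upgrade to all moments by hypercontractivity, and conclude by a Kolmogorov criterion for rough paths. The parenthetical observation that the spatial covariance of $Y(t,\cdot)$ is tent-plus-smooth (hence $Y(t,\cdot)$ is a periodic Brownian-bridge type process, a spatial semimartingale) is correct and gives a nice alternative way to interpret the off-diagonal entries, though it is not needed for the limit construction you actually carry out. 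One small remark: the invariance under $x\mapsto -x$ is mentioned in the paper's introduction as a selection principle for the ``natural'' lift, but it plays no role in this lemma, which is purely an existence statement; you can drop that sentence without loss.
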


The term ``canonically" means that for a large class of natural approximations of the process $X$ by smooth Gaussian processes $X_\varepsilon$, the iterated integrals of $X_\varepsilon$, defined by \eqref{eq:iter_int}, converge in $L^2$ to the corresponding elements of $\mathbf{X}$ (see \cite{FV10} for a precise definition and the proof).
Denote by $S_t = e^{t\Delta}$ the heat semigroup, which is given by convolution on the circle 
with the heat kernel
\begin{equ}[eq:heat_kernel]
p_{t}(x) = \frac{1}{\sqrt{2 \pi}} \sum_{k \in \intgr} e^{-t k^{2}} e^{ikx}\;.
\end{equ}
Assuming that the rough path-valued process $\mathbf{X}$ is given, we then \textit{define}
solutions to \eqref{eq:burgers} as follows:

\begin{definition}
\label{def:mild_burgers}
Setting $U(t) := S_t \left(u(0) - X(0) \right)$, a stochastic process $u$ is a {\it mild solution} to the equation \eqref{eq:burgers} if the process $v(t) := u(t) - X(t) - U(t)$ belongs to $\mathcal{C}^{1}_T$ for some $T > 0$ and the identity 
\begin{equs}[eq:mild_burgers]
v(t,x) &= \int_0^t S_{t-s} \left(F(u(s)) + G(u(s)) \partial_x (v(s) + U(s))\right)(x) \,ds \\
&\qquad + \int_0^t S_{t-s} \partial_x Z(s) (x) \,ds\;.
\end{equs}
holds for all $(t,x) \in [0, T] \times \mathbb{T}$. Here, we write for brevity $u(t) = v(t) + X(t) + U(t)$, and the process $Z(s,x)$ is a rough integral
\begin{equ}[eq:Z_def]
Z(s,x) := \dashint_{-\pi}^{x} G(u(s,y))\, d_y X(s,y)\;,
\end{equ}
whose derivative we consider in the sense of distributions. 
\end{definition}

\begin{remark}
In \cite{Hai11a}, the last integral in \eqref{eq:mild_burgers} was defined by
\begin{equ}
\int_0^t \dashint_{-\pi}^{\pi} p_{t-s}(x-y) G(u(s,y))\, d_y X(s,y) \,ds\;,
\end{equ}
but as noticed in \cite{HMW14}, the notion of solution in Definition~\ref{def:mild_burgers} is more convenient, as it simplifies treatment of the rough integral. This change does not affect the existence and uniqueness results of \cite{Hai11a}, and the resulting solutions are the same.
\end{remark}

For our convenience we rewrite the mild formulation of \eqref{eq:burgers_modif} as
\begin{equ}[eq:u_mild]
\bar{v}(t) = \mathbf{F}^{\bar{v}}(t) + \mathbf{G}^{\bar{v}}(t) + \mathbf{Z}^{\bar{v}}(t) - \mathbf{H}^{\bar{v}}(t)\;,
\end{equ}
where we have set
\begin{equs}[eq:u_mild_terms]
\mathbf{F}^{\bar{v}}(t) &:= \int_{0}^{t} S_{t-s} F(\bar{u}(s))\,ds\;, \qquad \mathbf{H}^{\bar{v}}(t)_i := \Lambda \int_{0}^{t} S_{t-s}~ \mathrm{div} G_{i}(\bar{u}(s))\,ds\;,\\
\mathbf{G}^{\bar{v}}(t) &:= \int_{0}^{t} S_{t-s} G(\bar{u}(s)) \partial_x ({\bar{v}}(s) + U(s)) \,ds\;,\\
\mathbf{Z}^{\bar{v}}(t) &:= \int_{0}^{t} S_{t-s} \partial_{x}Z(s)\,ds = \int_{0}^{t} \partial_{x}\big(S_{t-s} Z(s)\big)\,ds\;,
\end{equs}
and as before $\bar{u} = \bar{v} + X + U$, $U(t) = S_t(u^0 - X(0))$ and
\begin{equ}
Z(t,x) := \dashint_{-\pi}^{x} G(\bar{u}(t,y))\,d_{y} X(t,y)\;.
\end{equ}
Although the two terms $\mathbf{F}^{\bar{v}}$ and $\mathbf{H}^{\bar{v}}$ are of the same type,
we give them different names since they will arise in completely different ways
from the approximation.

\subsection{Existence and uniqueness results}

The next theorem provides the well-posedness result for a mild solution to the equation \eqref{eq:burgers}.
\begin{theorem}
Let us assume that $u^0 \in \mathcal{C}^\beta$ for some $\frac{1}{3} < \beta < \frac{1}{2}$. Furthermore, let $F \in \mathcal{C}^1$ and $G \in \mathcal{C}^{3}$. Then for almost every realisation of the driving noise, there is $T > 0$ such that there exists a unique mild solution to \eqref{eq:burgers} on the interval $[0,T]$ taking values in $\mathcal{C}\big([0,T], \mathcal{C}^\beta(\mathbb{T})\big)$. If moreover, $F$, $G$ and all their derivatives are bounded, then the solution is global (i.e. $T = \infty$).
\end{theorem}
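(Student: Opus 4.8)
The plan is to set up a fixed-point argument in the space $\mathcal{C}\big([0,T],\mathcal{C}^\beta\big)$ for the remainder $v = u - X - U$ that solves \eqref{eq:mild_burgers}. The right-hand side of \eqref{eq:mild_burgers} defines a map $\mathcal{M}$ on such $v$, and I would show that for $T$ small enough (depending on the realisation of the noise through the rough-path norm of $\mathbf{X}$ and on $\|u^0\|_{\mathcal{C}^\beta}$) the map $\mathcal{M}$ is a contraction on a suitable ball. The first step is to record the relevant Schauder-type bounds for the heat semigroup, i.e.\ $\|S_t\|_{\mathcal{C}^\gamma\to\mathcal{C}^{\gamma'}}\lesssim t^{-(\gamma'-\gamma)/2}$ and the corresponding gain-of-regularity estimates when integrating $S_{t-s}\partial_x$ against a distribution, together with the short-time smallness that comes from the time integral (the exponents are subcritical because $\beta<1/2$, so the singularities $t^{-(1+\beta)/2}$, etc., are integrable). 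These are exactly the properties collected in Appendix~\ref{appx:heat_sg}.

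The second and most substantial step is to control the rough-integral term $Z(s,x)=\dashint_{-\pi}^x G(\bar u(s,y))\,d_yX(s,y)$. Here I would use that $u(s)=v(s)+X(s)+U(s)$ with $v(s)\in\mathcal{C}^1$ and $U(s)$ smooth for $s>0$, so that $G(u(s))$ is a controlled rough path with respect to $\mathbf{X}(s)$: its Gubinelli derivative is $DG(u(s))\,(DX(s)+\ldots)$ and the remainder lies in $\mathcal{B}^{2\alpha}$ by a Taylor expansion of $G$ (this uses $G\in\mathcal{C}^3$, which is why $\alpha$ can be taken in $(1/3,1/2)$ and only second-order iterated integrals of $X$ are needed). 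Then the bounds \eqref{eq:RI_bound} and \eqref{eq:RI_two_bound} from Proposition~\ref{prop:RP_integrals}, applied pointwise in $s$, give $\|Z(s)\|_{\mathcal{C}^\alpha}\lesssim \VERT\mathbf{X}(s)\VERT_\alpha\,\|G(u(s))\|_{\mathcal{C}^\alpha_{\mathbf{X}(s)}}$ and the Lipschitz-in-$v$ analogue. Combining with the Schauder bound for $\int_0^t S_{t-s}\partial_x Z(s)\,ds$ and the elementary estimates on $\mathbf{F}$, $\mathbf{G}$ (here the derivative $\partial_x v$ is harmless since $v\in\mathcal{C}^1$, and $\partial_x U(s)$ is controlled with a blow-up of order $s^{-(1-\beta)/2}$ near $s=0$, integrable), one obtains $\|\mathcal{M}(v)\|_{\mathcal{C}^\beta_T}\le R/2$ and $\|\mathcal{M}(v_1)-\mathcal{M}(v_2)\|_{\mathcal{C}^\beta_T}\le\frac12\|v_1-v_2\|_{\mathcal{C}^\beta_T}$ on the ball of radius $R$ in $\mathcal{C}^\beta_T$, for $T=T(\omega)$ small. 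The Banach fixed-point theorem then yields local existence and uniqueness, and uniqueness in the full space follows by a standard patching/continuation argument.

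The final assertion — that boundedness of $F$, $G$ and all their derivatives forces $T=\infty$ — is obtained by the usual blow-up alternative: the solution can fail to extend past a time $T^\ast$ only if $\|v(t)\|_{\mathcal{C}^\beta}\to\infty$ as $t\uparrow T^\ast$, but when $G$ and its derivatives are bounded the controlled-path norm $\|G(u(s))\|_{\mathcal{C}^\alpha_{\mathbf{X}(s)}}$ is bounded linearly in $\|v(s)\|_{\mathcal{C}^1}$ (in fact one works in $\mathcal{C}^\beta$ with the smoothing from $S$), so a Gronwall argument on $\|v(t)\|_{\mathcal{C}^\beta}$ gives an a priori bound on every finite interval, ruling out blow-up. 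I expect the main obstacle to be the rough-integral term: one must check carefully that $G(u(s))$ is genuinely controlled by $\mathbf{X}(s)$ with norms depending continuously (and locally Lipschitz) on $v$ uniformly over $s\in(0,T]$, handling the $s\downarrow 0$ singularity coming from $U$, and that the resulting map $v\mapsto \mathbf{Z}^v$ inherits the contraction property after the temporal convolution. Everything else is routine Schauder analysis; this is essentially the argument of \cite{Hai11a}, adapted to the reference path $X$ of \eqref{eq:X}.
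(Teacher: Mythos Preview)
Your overall strategy---Picard iteration for $v=u-X-U$ in the mild formulation, using Schauder estimates and controlled rough paths for the $Z$-term---is exactly what the paper does (it merely cites \cite{Hai11a}). However, there is a genuine inconsistency in your choice of fixed-point space. You say the contraction is run in $\mathcal{C}\big([0,T],\mathcal{C}^\beta\big)$, yet later you invoke ``$\partial_x v$ is harmless since $v\in\mathcal{C}^1$'' and require $G(u(s))$ to have a $\mathcal{B}^{2\alpha}$ remainder with $\alpha>\tfrac13$. Neither of these is available if $v$ is only $\mathcal{C}^\beta$ with $\beta<\tfrac12$: the classical term $G(u)\partial_x v$ is undefined, and the Taylor remainder $G(u(y))-G(u(x))-DG(u(x))X(x,y)$ would only lie in $\mathcal{B}^{\min(2\alpha,\beta)}=\mathcal{B}^\beta$, which is insufficient for the rough integral bound since $2\alpha>\tfrac23>\beta$. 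The paper (following \cite{Hai11a}) therefore runs the Picard iteration in $\mathcal{C}^1_T$, not $\mathcal{C}^\beta_T$; the Schauder gain from $S_{t-s}\partial_x$ acting on $Z(s)\in\mathcal{C}^\alpha$ is precisely what makes the image of $\mathcal{M}$ land back in $\mathcal{C}^1$. Once you correct the space to $\mathcal{C}^1_T$, the rest of your outline is fine (and the Gubinelli derivative of $G(u)$ is simply $DG(u)$, not $DG(u)\cdot DX$).
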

\begin{proof} 
The proof can be done by performing a classical Picard iteration for $v$ given by \eqref{eq:mild_burgers} on the space $\mathcal{C}^{1}_{T}$ for some $T \leq 1$, see \cite{Hai11a}.
\end{proof}

\begin{remark}
\label{rem:v_reg}
The argument of \cite[Theorem 3.7]{Hai11a} also works in the space $\mathcal{C}^{1+\alpha}_{\alpha/2, T}$, for any $\alpha \in \bigl[0, \frac{1}{2}\bigr)$. Hence, the real regularity of $v(t)$ is $1 + \alpha$ rather than $1$. This fact will be used in Section \ref{sec:reaction} to estimate how close the approximate derivative of $v$ is to $\partial_x v$.
\end{remark}


\section{Solutions of the approximate equations}
\label{sec:approx}

In this section we rewrite the mild solution to the approximate equation \eqref{eq:SPDE_approx} in a way convenient for working in H\"{o}lder spaces of low regularity. In particular, we define the iterated integrals of higher order of the controlling process.

Similarly to \eqref{eq:Y} and \eqref{eq:X} we define the stationary process $Y_{\varepsilon}$ and $X_\varepsilon$ by
\begin{equs}[eq:X_eps]
d Y_{\varepsilon} = \Delta_{\varepsilon} Y_{\varepsilon} dt + \Pi H_{\varepsilon} d W\;, \qquad X_\varepsilon(t,x) := Y_\varepsilon(t,x) + \frac{1}{\sqrt{2\pi}} w^0(t)\;,
\end{equs}
where $w^0$ is the zeroth Fourier mode of $W$. 
Moreover, we define the approximate semigroup $S^{(\varepsilon)}_t = e^{t\Delta_{\varepsilon}}$ generated by the approximate Laplacian and given by convolution on the circle 
$\mathbb{T}$ with the approximate heat kernel
\begin{equ}[eq:approx_heat_kernel]
p^{(\varepsilon)}_{t}(x) = \frac{1}{\sqrt{2 \pi}} \sum_{k \in \intgr} e^{-t k^{2} m(\varepsilon k)} e^{ikx}\;.
\end{equ}
Furthermore, we define $U_\varepsilon(t) := S^{(\varepsilon)}_t \left(u_\varepsilon(0) - X_\varepsilon(0) \right)$ and $v_\varepsilon := u_\varepsilon - X_\varepsilon - U_\varepsilon$. Then the mild version of the approximate equation \eqref{eq:SPDE_approx} can be rewritten as
\begin{equs}[eq:u_eps_mild]
v_{\varepsilon}(t) = \mathbf{F}_{\varepsilon}^{v_{\varepsilon}}(t) + \mathbf{G}_{\varepsilon}^{v_{\varepsilon}}(t) + \int_{0}^{t} S^{(\eps)}_{t-s}G(u_{\varepsilon}(s))D_{\varepsilon} X_{\varepsilon}(s) \,ds\;,
\end{equs}
where we write for brevity $u_\varepsilon = v_\varepsilon + X_\varepsilon + U_\varepsilon$, and set
\begin{equs}[eq:Phi_Psi_eps]
\mathbf{F}_{\varepsilon}^{v_{\varepsilon}}(t) &:= \int_{0}^{t} S^{(\eps)}_{t-s}F(u_{\varepsilon}(s)) \,ds\;,\\
\mathbf{G}_{\varepsilon}^{v_{\varepsilon}}(t) &:= \int_{0}^{t} S^{(\eps)}_{t-s} G(u_\varepsilon(s)) D_\varepsilon \left(v_{\varepsilon}(s) + U_{\varepsilon}(s) \right) ds\;.
\end{equs}
As already mentioned in Section \ref{sec:RP}, the rough integrals are approximated by 
Riemann-like sums, but these include additional higher-order correction terms. Hence, we 
cannot expect in general that $Z(s,x)$, defined in \eqref{eq:Z_def}, is approximated by
\begin{equ}[eq:appr_Z]
\int_{-\pi}^{x} G(u_\varepsilon(s,y)) D_\varepsilon X_{\varepsilon}(s,y)\, dy\;,
\end{equ}
as $\varepsilon \downarrow 0$. In order to approximate $Z(s,x)$, we have to add some extra terms to \eqref{eq:appr_Z}. These extra terms give raise to the correction term in the limiting equation, mentioned in the introduction. In the rest of this section we build these missing extra terms.


\subsection{Iterated integrals}
\label{ssec:integrals}

In order to use the theory of rough paths with regularities close to zero, we need to build the iterated integrals of arbitrarily high orders of $X$ and $X_\varepsilon$ with respect to themselves. 

The expansion of $X_{\varepsilon}$ defined in \eqref{eq:X_eps} in the Fourier basis is given by
\begin{equs}[eq:X_eps_Fourier]
X_{\varepsilon}(t,x) =&\frac{1}{\sqrt{2 \pi}} w_0(t) + \frac{1}{\sqrt{2 \pi}} \sum_{k \in \intgr\backslash \{0\}} \int_{-\infty}^{t} e^{ikx} e^{-k^{2} m(\varepsilon k)(t-s)}h(\varepsilon k)\, dw_{k}(s) \\
=& \frac{1}{\sqrt{2 \pi}} w_0(t) + \frac{1}{\sqrt{\pi}} \sum_{k =1}^\infty \frac{q^{(\varepsilon)}_{k}}{k} \left(\eta^{(\varepsilon)}_{k} (t) \sin(kx) + \eta^{(\varepsilon)}_{-k} (t) \cos(kx)\right).
\end{equs}
Here, $w_{k}$ are $\com^{n}$-valued standard Brownian motions (i.e. the real and imaginary parts of every component are independent real-valued Brownian motions so that $\mathbb{E}|w^{i}_{k}(t)|^{2}=t$), which are independent up to the constraint $w_{k}=\bar{w}_{-k}$ ensuring that $X_{\varepsilon}$ is real-valued. Furthermore, for every fixed $t \geq 0$, $\eta^{(\varepsilon)}_{k}(t)$ are independent $\real^{n}$-valued standard Gaussian random vectors such that
\begin{equ}
\mathbb{E}\left[ \eta^{(\varepsilon)}_{k}(0) \otimes \eta^{(\varepsilon)}_{k}(t) \right] = e^{-k^2 m(\varepsilon k) t} \mathrm{Id}\;,
\end{equ}
and the coefficients $q^{(\varepsilon)}_{k}$ are defined by
\begin{equ}[eq:q_def]
q^{(\varepsilon)}_{k} = \frac{h(\varepsilon k)}{\sqrt{m(\varepsilon k)}} \qquad\mbox{ for  $k \geq 1$}\;.
\end{equ}
Similarly, the Fourier expansion of the process $X$ is
\begin{equs}[eq:X_Fourier]
X(t,x) = \frac{1}{\sqrt{2 \pi}} w_0(t) + \frac{1}{\sqrt{\pi}} \sum_{k =1}^\infty \frac{1}{k} \left(\eta_{k} (t) \sin(kx) + \eta_{-k} (t) \cos(kx)\right),
\end{equs}
where $\eta_{k}(t)$ are independent $\real^{n}$-valued standard Gaussian random vectors such that
\begin{equ}
\mathbb{E}\left[ \eta_{k}(0) \otimes \eta_{k}(t) \right] = e^{-k^2 t} \mathrm{Id}\;.
\end{equ}
Furthermore, the random vectors $\{(\eta^{(\varepsilon)}_{k}(t), \eta_{k}(t)) : k \in \intgr \setminus \{0\}\}$ are independent and satisfy
\begin{equ}
\mathbb{E}\left[ \eta^{(\varepsilon)}_{k}(t) \otimes \eta_{k}(t) \right] = \frac{\sqrt{m(\varepsilon k)}}{m(\varepsilon k) + 1} \mathrm{Id} =: \tilde{q}^{(\varepsilon)}_k\;.
\end{equ}

The following lemma provides bounds on the canonical lifts of $X(t)$ and $X_\varepsilon(t)$ to Gaussian rough paths.

\begin{lemma}
\label{lem:X_X_eps}
For any $\alpha \in \bigl(0,\frac{1}{2}\bigr)$ and $p = \lfloor 1/\alpha \rfloor$, 
there are canonical lifts \linebreak $\mathbf{X}, \mathbf{X}_\eps: \mathbb{R}_+ \times \mathbb{T}^2 \to T^{(p)}(\mathbb{R}^n)$ 
of the processes $X$ and $X_\eps$ respectively, which are continuous functions in the time variable such that, for every $t \geq 0$, $\mathbf{X}(t)$ and $\mathbf{X}_\eps(t)$ are Gaussian rough paths of regularity $\alpha$. Furthermore, for any $\lambda < \frac{1}{2}-\alpha$ and any $T > 0$ the following bounds hold
\begin{equ}[eq:X_bounds]
\mathbb{E}\Vert X \Vert_{\mathcal{C}^\alpha_T} \lesssim 1\;,\qquad 
\mathbb{E}\Vert X - X_\varepsilon \Vert_{\mathcal{C}^\alpha_T} \lesssim \varepsilon^{\lambda}\;.
\end{equ}
Moreover, for any word $w \in \mathcal{A}_p$ with $|w| \geq 2$ we have
\begin{equ}[eq:X_iterated_bounds]
\mathbb{E}\Vert \mathbf{X}^w \Vert_{\mathcal{B}^{|w|\alpha}_T} \lesssim 1\;,\qquad
\mathbb{E}\Vert \mathbf{X}^w - \mathbf{X}^w_\varepsilon \Vert_{\mathcal{B}^{|w|\alpha}_T} \lesssim \varepsilon^{\lambda}\;,
\end{equ}
where we use the notation $\mathbf{X}^w = \angles{\mathbf{X}}{e_w}$.
\end{lemma}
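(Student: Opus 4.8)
The plan is to construct the lifts $\mathbf{X}$ and $\mathbf{X}_\eps$ through a Wiener chaos argument and a Kolmogorov-type continuity criterion, using the fact that for each fixed time $t$ the processes $X(t)$ and $X_\eps(t)$ are Gaussian, with explicitly known covariance structures coming from the Fourier expansions \eqref{eq:X_eps_Fourier} and \eqref{eq:X_Fourier}. First I would fix $\alpha \in (0, \tfrac12)$ and $p = \lfloor 1/\alpha \rfloor$ and observe that the canonical lift of a smooth (mollified) approximation $X^{(\delta)}$ of $X$ via iterated Riemann–Stieltjes integrals \eqref{eq:iter_int} defines, for each word $w$, a random variable $\mathbf{X}^{w,\delta}(t;x,y)$ living in the $|w|$-th homogeneous Wiener chaos of the underlying Gaussian field. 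Since all $L^q$ norms are equivalent on a fixed chaos, bounding $\mathbb{E}|\mathbf{X}^{w,\delta}(t;x,y)|^q$ for large $q$ reduces to bounding the second moment $\mathbb{E}|\mathbf{X}^{w,\delta}(t;x,y)|^2$, which in turn is a finite sum of products of the covariance kernel of $X(t)$ and its derivatives. The key spatial input is that the covariance of $\partial_x$-type increments of $X(t)$ behaves like that of a fractional-type Gaussian process of Hurst regularity $\alpha$; this is exactly the setting handled in \cite{FGGR13}, from which one extracts the estimate $\mathbb{E}|\mathbf{X}^{w,\delta}(t;x,y)|^2 \lesssim |x-y|^{2\alpha|w|}$, uniformly in $\delta$ and $t$.

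Next I would upgrade these pointwise moment bounds to the $\mathcal{B}^{|w|\alpha}$-seminorm bounds: by the Kolmogorov continuity theorem applied in the spatial variable (with a sufficiently large moment $q$ to absorb the loss in the Hölder exponent), together with the uniformity in $t$ and a second application of Kolmogorov in the time variable using the bound on $\mathbb{E}|\mathbf{X}^{w,\delta}(t;x,y) - \mathbf{X}^{w,\delta}(t';x,y)|^2 \lesssim |t-t'|^{\theta}|x-y|^{2\alpha|w|-\theta\theta'}$ for suitable small exponents (these are again controlled via \cite{FGGR13}-type estimates on the time-dependence of the covariance, which comes from the factors $e^{-k^2 m(\eps k)(t-s)}$), one obtains a version of $\mathbf{X}^{w,\delta}$ jointly continuous in $(t,x,y)$ with $\mathbb{E}\Vert \mathbf{X}^{w,\delta}\Vert_{\mathcal{B}^{|w|\alpha}_T}$ bounded uniformly in $\delta$. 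Passing to the limit $\delta \to 0$ in each fixed chaos gives $\mathbf{X}^w$; the algebraic identities (Chen's relation and the shuffle relations of Definition~\ref{def:RP}) are stable under $L^2$ limits since they hold for every $X^{(\delta)}$, so $\mathbf{X}(t)$ is a geometric $\alpha$-rough path for every $t$. This handles the first bounds in \eqref{eq:X_bounds} and \eqref{eq:X_iterated_bounds}. The same argument applied to $X_\eps$ — whose Fourier coefficients differ only through the bounded multipliers $h(\eps k)$ and $m(\eps k)$, which by Assumptions \ref{as:second} and \ref{as:fourth} give a covariance of the same qualitative form — produces $\mathbf{X}_\eps$ with uniform-in-$\eps$ bounds.

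For the difference bounds, the idea is to interpolate. One writes $\mathbf{X}^w - \mathbf{X}^w_\eps$ (for $|w| \geq 2$, and similarly $X - X_\eps$ for $|w|=1$) as an element of the sum of chaoses of order $\le |w|$, and estimates its second moment using the joint covariance structure recorded before Lemma~\ref{lem:X_X_eps}, in particular $\mathbb{E}[\eta^{(\eps)}_k(t) \otimes \eta_k(t)] = \tilde q^{(\eps)}_k \mathrm{Id}$ with $\tilde q^{(\eps)}_k = \sqrt{m(\eps k)}/(m(\eps k)+1)$, together with $q^{(\eps)}_k = h(\eps k)/\sqrt{m(\eps k)}$. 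The pointwise quantities $1 - \tilde q^{(\eps)}_k$ and $1 - q^{(\eps)}_k$, and the analogous differences in the time-decay exponents, are $O((\eps k)^{\min(2,\gamma)})$ near $k = 0$ by the differentiability-at-the-origin hypotheses ($m(0)=1$, $h(0)=1$, $h'(0)=0$), and are bounded for all $k$ by the bounded-variation hypotheses; a standard dyadic/summation-by-parts argument then shows $\mathbb{E}|\mathbf{X}^w(t;x,y) - \mathbf{X}^w_\eps(t;x,y)|^2 \lesssim \eps^{2\lambda}|x-y|^{2\alpha|w| - 2\lambda}$ for any $\lambda < \tfrac12 - \alpha$, at the cost of trading regularity for the rate $\eps^{\lambda}$. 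Feeding this into the Kolmogorov criterion as before yields \eqref{eq:X_bounds} and \eqref{eq:X_iterated_bounds}. The main obstacle is precisely this last spatial/temporal covariance estimate for the difference: one must quantify carefully how the multiplier discrepancies propagate through the iterated-integral covariances of arbitrary order $|w|$, keeping the dependence on $|x-y|$ sharp enough that the Kolmogorov exponent stays positive while extracting the full rate $\eps^{\lambda}$ for every $\lambda < \tfrac12 - \alpha$; this is where the full strength of the moment hypothesis on $\mu$ in Assumption~\ref{as:third} and the bounded-variation hypothesis on $h/(m+1)$ in Assumption~\ref{as:fourth} enter, allowing us to invoke the quantitative bounds of \cite{FGGR13} for lifted Gaussian rough paths.
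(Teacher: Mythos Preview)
Your approach is essentially the same as the paper's: both rely on the results of \cite{FGGR13} to produce the canonical lifts and the moment bounds on the iterated integrals, then upgrade to uniform-in-time bounds via a Kolmogorov criterion. Two remarks are in order.

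First, a minor misattribution: Assumption~\ref{as:third} (moments of the measure $\mu$) plays no role in this lemma. The processes $X$ and $X_\eps$ are defined only through $m$ and $h$, so only Assumptions~\ref{as:first} and~\ref{as:fourth} enter. In the paper, the bounded-variation hypotheses on $h^2/m$ and on $h/(m+1)$ are used to verify that certain Fourier coefficient sequences are ``uniformly negligible'' in the sense of \cite[Definition~3.6]{FGGR13}, which is the precise input needed to invoke \cite[Theorems~3.15,~3.16]{FGGR13}. Your proposed ``dyadic/summation-by-parts argument'' would have to reproduce exactly this negligibility check.

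Second, the paper organizes the difference bound slightly differently from what you sketch. Rather than proving a single estimate of the form $\mathbb{E}|\mathbf{X}^w(t;x,y) - \mathbf{X}^w_\eps(t;x,y)|^2 \lesssim \eps^{2\lambda}|x-y|^{2\alpha|w|-2\lambda}$, the paper first obtains, for each fixed $t$, the full-regularity bound $\mathbb{E}\Vert \mathbf{X}^w(t) - \mathbf{X}^w_\eps(t)\Vert^q_{\mathcal{B}^{|w|\alpha}} \lesssim \eps^{\gamma q}$ directly from \cite[Theorem~3.15]{FGGR13}, and separately the temporal increment bound $\mathbb{E}\Vert \mathbf{X}^w(t) - \mathbf{X}^w(s)\Vert^q_{\mathcal{B}^{|w|\alpha}} \lesssim |t-s|^{\gamma q/2}$ (and likewise for $\mathbf{X}_\eps$). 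These are then combined by the elementary interpolation $\eps^\gamma \wedge |t-s|^{\gamma/2} \lesssim \eps^{\frac12-\alpha-\delta}|t-s|^{\delta/2}$, after which the Kolmogorov criterion in time yields the supremum-in-time bound. Your route would also work, but the paper's decoupling of the $\eps$-rate from the spatial regularity avoids having to track the tradeoff inside the iterated-integral covariance computation.
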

\begin{proof}
The proof of \eqref{eq:X_bounds} is provided in \cite[Lemma 3.3]{HMW14}. We only have to show that there exist the claimed lifts which satisfy the estimates \eqref{eq:X_iterated_bounds}. To this end, we define, for some $\kappa >0$, the following sequences
\[
\beta_k^{(\varepsilon, \kappa)} = \frac{h(\varepsilon k)^2}{k^{\kappa} m(\varepsilon k)}\;,\qquad \rho_k^{(\varepsilon, \kappa)} = \frac{h(\varepsilon k)}{k^{\kappa}(m(\varepsilon k) + 1)}\;,
\]
where $k \geq 1$. First, for the increments of $\beta_k^{(\varepsilon, \kappa)}$ we have
\begin{equs}
|\beta_{k+1}^{(\varepsilon, \kappa)} - \beta_k^{(\varepsilon, \kappa)}| \leq |(q^{(\varepsilon)}_{k+1})^2| &\left|(k+1)^{-\kappa} - k^{-\kappa}\right| \\
&+ k^{-\kappa} | (q^{(\varepsilon)}_{k+1})^2 - (q^{(\varepsilon)}_{k})^2 | \leq C k^{-1 - \kappa}\;,
\end{equs}
for some constant $C > 0$, where $q^{(\varepsilon)}_{k}$ is defined in \eqref{eq:q_def}. To get the last inequality we have used the bounds on the functions $m$ and $h$, provided in Assumptions \ref{as:second} and \ref{as:fourth}, and the estimate
\[
| (q^{(\varepsilon)}_{k+1})^2 - (q^{(\varepsilon)}_{k})^2 | \leq C k^{-1}\;,
\]
which follows from the bound on the total variation of the function $h^2/m$, provided by Assumption \ref{as:fourth}. Second, the convergence $\beta_k^{(\varepsilon, \kappa)} \log k \rightarrow 0$ holds as $k \rightarrow \infty$. 

Using these properties of $\beta_k^{(\varepsilon, \kappa)}$, we obtain from \cite[Theorem 4]{Tel73} that the series $\sum_{k=1}^N \beta_k^{(\varepsilon, \kappa)} \cos kx$ converge in $L^1$ as $N \rightarrow \infty$, and the $L^1$-norm of the limit is independent of $\varepsilon$, which proves that for any $\kappa > 0$ the parametrized sequence $\beta_k^{(\varepsilon, \kappa)}$ is uniformly negligible in $\varepsilon \in (0,1)$ in the sense of \cite[Definition 3.6]{FGGR13}.

Similarly, using the bound on the total variation of $h/(m+1)$, which is stated in Assumption $\ref{as:fourth}$, we can obtain that for any $\kappa > 0$ the sequence $\rho_k^{(\varepsilon, \kappa)}$ is uniformly negligible in $\varepsilon \in (0,1)$ as well.

Noticing that the coefficients of the Fourier expansions \eqref{eq:X_eps_Fourier} and \eqref{eq:X_Fourier} satisfy
\[
\left(\frac{q^{(\varepsilon)}_{k}}{k}\right)^2 = \frac{\beta_k^{(\varepsilon, \kappa)}}{k^{2 - \kappa}}\;, \qquad\frac{q^{(\varepsilon)}_{k} \tilde{q}^{(\varepsilon)}_{k}}{k^2} = \frac{\rho_k^{(\varepsilon, \kappa)}}{k^{2 - \kappa}}\;,
\]
we can apply \cite[Theorem 3.16]{FGGR13} and obtain that for every $t$ and $\alpha < \frac{1}{2}$ the processes $X(t)$ and $X_\varepsilon(t)$ can indeed be lifted to $\alpha$-regular rough paths $\mathbf{X}(t)$ and $\mathbf{X}_\varepsilon(t)$ respectively, such that for any $q \geq 1$ and for any word $w \in \mathcal{A}_p$ with $|w| \geq 2$ the bounds
\begin{equs}[eq:X_iterated_space_bound]
\mathbb{E}\Vert \mathbf{X}^w(t) \Vert^q_{\mathcal{B}^{|w|\alpha}} \lesssim 1\;, \qquad \mathbb{E}\Vert \mathbf{X}^w_\varepsilon(t) \Vert^q_{\mathcal{B}^{|w|\alpha}} \lesssim 1
\end{equs}
hold uniformly in $t \in [0,T]$. Furthermore, by \cite[Theorem 3.15]{FGGR13} we obtain that for all $\gamma < \frac{1}{2} - \alpha$, any $q \geq 1$ and any $\kappa > 0$ small enough,
\begin{equ}[eq:X_X_eps_iterated_space_bound]
\mathbb{E}\Vert \mathbf{X}^w(t) - \mathbf{X}^w_\varepsilon(t) \Vert^q_{\mathcal{B}^{|w|\alpha}} \lesssim \left( \sup_{x \in \mathbb{T}} \mathbb{E} |X(t,x) - X_\varepsilon(t,x)|^2 \right)^{(\gamma+\kappa)q} \lesssim \varepsilon^{\gamma q}\;,
\end{equ}
uniformly in $t \in [0,T]$. The last bound can be shown almost identically to \cite[(3.16d)]{HMW14}, but taking $\theta \equiv 1$ and the time interval from $-\infty$.

Now we will investigate the temporal regularity of $\boldsymbol{X}_\varepsilon$. Our aim is to apply \cite[Theorem 3.15]{FGGR13} to the processes $\boldsymbol{X}_\varepsilon(s)$ and $\boldsymbol{X}_\varepsilon(t)$, with $s, t \in [0,T]$. To this end, let us define $\tau = |t-s|$ and the parametrized sequence $\mu^{(\tau, \varepsilon)}_k = e^{-k^2 m(\varepsilon k) \tau}$. Then, in the same way as in the beginning of the proof and using Assumptions \ref{as:second} and \ref{as:fourth}, we obtain that for any $\kappa > 0$ the sequence $\beta_k^{(\kappa, \varepsilon)} \mu^{(\tau, \varepsilon)}_k$ is uniformly negligible in $\tau > 0$ and $\varepsilon \in (0,1)$ and by \cite[Theorem 3.15]{FGGR13} we obtain, for any word $w \in \mathcal{A}_p$ with $|w| \geq 2$,
\begin{equ}[eq:X_eps_iterated_temp_bound]
\mathbb{E}\Vert \mathbf{X}^w_\varepsilon(t) - \mathbf{X}^w_\varepsilon(s) \Vert^q_{\mathcal{B}^{|w|\alpha}} \lesssim \left( \sup_{x \in \mathbb{T}} \mathbb{E} |X_\varepsilon(s,x) - X_\varepsilon(t,x)|^2 \right)^{\gamma q} \lesssim |t-s|^{\frac{\gamma q}{2}}\;,
\end{equ}
for all $\gamma < \frac{1}{2} - \alpha$ and $q \geq 1$. Here, the last bound can be derived similarly to \cite[(3.16a)]{HMW14}, but with $\theta \equiv 1$ and the time interval from $-\infty$. In the same way, we get 
\begin{equs}[eq:X_iterated_temp_bound]
\mathbb{E}\Vert \mathbf{X}^w(t) - \mathbf{X}^w(s) \Vert^q_{\mathcal{B}^{|w|\alpha}} \lesssim |t-s|^{\frac{\gamma q}{2}}\;.
\end{equs}
Applying the Kolmogorov criterion \cite{Kal02} together with the bounds \eqref{eq:X_iterated_space_bound} and \eqref{eq:X_iterated_temp_bound}, we get the first estimate in \eqref{eq:X_iterated_bounds}.

Now, let us take any word $w \in \mathcal{A}_p$ with $|w| \geq 2$. Then, on the one hand, the estimate  \eqref{eq:X_X_eps_iterated_space_bound} gives for every $q \geq 1$,
\begin{equs}
\mathbb{E} \Vert \boldsymbol{X}^w(t) &- \boldsymbol{X}_\varepsilon^w(t) - \boldsymbol{X}^w(s) + \boldsymbol{X}_\varepsilon^w(s) \Vert^q_{\mathcal{B}^{\alpha|w|}} \\
&\leq \mathbb{E}\Vert \boldsymbol{X}^w(t) - \boldsymbol{X}^w_\varepsilon(t) \Vert^q_{\mathcal{B}^{\alpha|w|}} + \mathbb{E}\Vert \boldsymbol{X}^w(s) - \boldsymbol{X}^w_\varepsilon(s) \Vert^q_{\mathcal{B}^{\alpha|w|}} \lesssim \varepsilon^{\gamma q}\;.
\end{equs}
On the other hand, from \eqref{eq:X_iterated_temp_bound} and \eqref{eq:X_eps_iterated_temp_bound} the following estimate follows
\begin{equs}
\mathbb{E}&\Vert \boldsymbol{X}^w(t) - \boldsymbol{X}_\varepsilon^w(t) - \boldsymbol{X}^w(s) + \boldsymbol{X}_\varepsilon^w(s) \Vert^q_{\mathcal{B}^{\alpha|w|}} \\
&\leq \mathbb{E}\Vert \boldsymbol{X}^w_\varepsilon(t) - \boldsymbol{X}^w_\varepsilon(s) \Vert^q_{\mathcal{B}^{\alpha|w|}} + \mathbb{E}\Vert \boldsymbol{X}^w(t) - \boldsymbol{X}^w(s) \Vert^q_{\mathcal{B}^{\alpha|w|}} \lesssim |t-s|^{\frac{\gamma q}{2}}\;.
\end{equs}
Combining these two bunds we obtain
\begin{equs}
\mathbb{E}\Vert \boldsymbol{X}^w(t) - \boldsymbol{X}_\varepsilon^w(t) - \boldsymbol{X}^w(s) + \boldsymbol{X}_\varepsilon^w(s) \Vert^q_{\mathcal{B}^{\beta|w|}} &\lesssim \left(\varepsilon^{\gamma} \wedge |t-s|^{\frac{\gamma}{2}}\right)^q \\
&\lesssim \left(\varepsilon^{\frac{1}{2} - \alpha - \delta} |t-s|^{\frac{\delta}{2}}\right)^q,
\end{equs}
for any $\delta > 0$ small enough and uniformly in $s, t \in [0,T]$. From this bound, estimate \eqref{eq:X_X_eps_iterated_space_bound} and the Kolmogorov criterion \cite{Kal02} we obtain the second bound in \eqref{eq:X_iterated_bounds}.
\end{proof}


\subsection{Approximation of the rough integral}
\label{ssec:rough_int}

Now, having defined the iterated integrals of $X_\varepsilon$, we can build an approximation of the process $Z$ defined in \eqref{eq:Z_def}.

The idea comes from the fact that if $u(t)$ is controlled by $\mathbf{X}(t)$, then the process $G(u(t))$ is controlled by $\mathbf{X}(t)$ as well. The Taylor expansion gives an approximation for $G_{ij}(u(t))$,
\begin{equs}
G_{ij}(u(t, y)) \approx G_{ij}(u(t, x)) + \sum_{w \in \mathcal{A}_{p-1} \setminus \emptyset} \tilde C_w D^w G_{ij}(u(t, x)) \left( u(t, y) - u(t, x) \right)_w.
\end{equs}
Here, $\tilde C_w$ are combinatorial factors which can be calculated explicitly. Furthermore, we use the following notation: for $w = w_1 \cdots w_k \in \mathcal{A}_{p-1}$ and $k \geq 1$ we denote $D^w = D^{w_1} \cdots D^{w_k}$ and $u(t,x)_w = u_{w_1}(t,x) \cdots u_{w_k}(t,x)$.

Recalling that we will look for solutions such that $u(t) - X(t) \in \mathcal{C}^1$, we obtain an approximation of $G_{ij}(u(t))$ via $\mathbf{X}(t)$,
\begin{equ}
G_{ij}(u(t, y)) \approx G_{ij}(u(t, x)) + \sum_{\substack{w \in \mathcal{A}_{p-1} \setminus \{\emptyset\} \\ w=w_1 \ldots w_k}} \tilde C_w D^w G_{ij}(u(t, x)) \prod_{l=1}^k \angles{\mathbf{X}(t; x,y)}{e_{w_l}}.
\end{equ}
Symmetrising this expression and using Definition~\ref{def:RP}, this can be rewritten as
\begin{equ}[eq:G_approx_new]
G_{ij}(u(t, y)) \approx \sum_{w \in \mathcal{A}_{p-1}} C_w D^w G_{ij}(u(t, x)) \angles{\mathbf{X}(t; x,y)}{e_w}\;,
\end{equ}
for some slightly different constants $C_w$. This expansion motivates our choice of the terms in the approximation of the rough integral.

In view of Assumption \ref{as:third}, it is natural to define the process $D_\varepsilon \mathbf{X}_{\varepsilon} : \real_+ \times \mathbb{T} \rightarrow T^{(p)}\big(\real^n\big)$ in the following way: for any word $w \in \mathcal{A}_p$ we set
\begin{equ}[eq:D_eps_X_def]
\angles{D_\varepsilon \mathbf{X}_{\varepsilon}(t;y)}{e_w} := \frac{1}{\varepsilon} \int_{\real} \angles{\mathbf{X}_\varepsilon(t;y,y+\varepsilon z)}{e_w} \mu(dz)\;.
\end{equ}
Combining the expansion \eqref{eq:G_approx_new} with the definition \eqref{e:defInt}, 
it appears plausible that a good approximation of $Z$ is given by
\begin{equ}[eq:Z_eps_def]
Z_{\varepsilon}(t,x)_{i} := \sum_{w \in \mathcal{A}_{p-1}} C_w \int_{-\pi}^{x} D^{w} G_{ij}(u_{\varepsilon}(t,y)) \angles{D_\varepsilon \mathbf{X}_{\varepsilon}(t;y)}{e_w \otimes e_j} \,dy\;.
\end{equ}
Here, to simplify the notation we have omitted the sum over $j$. 

Now we can rewrite the mild solution \eqref{eq:u_eps_mild} as
\begin{equ}[eq:u_eps_mild_new]
v_{\varepsilon}(t) = \mathbf{F}_{\varepsilon}^{v_{\varepsilon}}(t) + \mathbf{G}_{\varepsilon}^{v_{\varepsilon}}(t) + \mathbf{Z}_{\varepsilon}^{v_{\varepsilon}}(t) - \mathbf{H}_{\varepsilon}^{v_{\varepsilon}}(t) - \mathbf{\bar H}_{\varepsilon}^{v_{\varepsilon}}(t)\;,
\end{equ}
where the functions $\mathbf{F}_{\varepsilon}^{v_{\varepsilon}}$ and $\mathbf{G}_{\varepsilon}^{v_{\varepsilon}}$ are defined in \eqref{eq:Phi_Psi_eps}. The term involving the rough integral is denoted by
\begin{equs}[eq:Xi_eps_def]
\mathbf{Z}_{\varepsilon}^{v_{\varepsilon}}(t) := \int_{0}^{t} S^{(\eps)}_{t-s}\partial_{x}Z_{\varepsilon}(s)\,ds = \int_{0}^{t} \partial_{x} \big(S^{(\eps)}_{t-s}Z_{\varepsilon}(s)\big) \,ds\;.
\end{equs}
The additional terms in \eqref{eq:u_eps_mild_new} which we used to approximate the rough integral we denote by
\begin{equs}
\label{eq:Upsilon_eps}
\mathbf{H}_{\varepsilon}^{v_{\varepsilon}}(t,x)_{i} &:= \sum_{k \in \mathcal{A}} \int_{0}^{t} S^{(\eps)}_{t-s}\Big(D^{k} G_{ij}(u_{\varepsilon}(s,\cdot)) \angles{D_\varepsilon \mathbf{X}_{\varepsilon}(s;\cdot)}{e_{kj}}\Big)(x)\, ds\;,\\
\mathbf{\bar H}_{\varepsilon}^{v_{\varepsilon}}(t,x)_{i} &:= \sum_{\substack{w \in \mathcal{A}_{p-1}\\ |w| \geq 2}} C_w \int_{0}^{t} S^{(\varepsilon)}_{t-s}\Big(D^{w} G_{ij}(u_{\varepsilon}(s,\cdot)) \angles{D_\varepsilon \mathbf{X}_{\varepsilon}(s;\cdot)}{e_{wj}}\Big)(x)\, ds.
\end{equs}

In the next sections we will show that the term $\mathbf{\bar H}_{\varepsilon}^{v_{\varepsilon}}$ tends to $0$ and the other terms in \eqref{eq:u_eps_mild_new} converge to the corresponding terms in \eqref{eq:u_mild} in the space $\mathcal{C}^1_T$.

\subsection{A priori estimates on the terms}
\label{ss:APriori}

In what follows we use the constant $\alpha_{\star} = \frac{1}{2} - \alpha$, for some fixed small $\alpha > 0$. This constant represents the real spatial regularity of the process $X$ defined in \eqref{eq:X}. To obtain better bounds we will work in the spaces of regularity $\alpha$, which is close to $0$. The constants $\alpha$ and $\alpha_\star$ are used throughout the article as fixed values. 

To shorten notations we define the norm
\begin{equ}[eq:RPNorm_time]
\VERT \mathbf{X} \VERT_{\alpha_\star, T} := \sup_{t \in [0,T]} \VERT \mathbf{X}(t) \VERT_{\alpha_\star}\;.
\end{equ}
See \eqref{e:RPNorm} for the definition of the norm of a rough path. For any $K > 0$ we define the stopping time
\begin{equs}
\sigma_{K} := \inf\{t \geq 0\,:\, &\Vert X \Vert_{\mathcal{C}_{t}^{\alpha_\star}} \geq K, \text{ or } \VERT \mathbf{X} \VERT_{\alpha_\star, t} \geq K, \text{ or } \Vert \bar{v} \Vert_{\mathcal{C}^{1+\alpha_\star}_{\alpha_\star/2,t}} \geq K,\\
&\text{ or } \Vert \bar{v} \Vert_{\mathcal{C}^{1}_{t}} \geq K, \text{ or } \Vert v_\varepsilon \Vert_{\mathcal{C}^{1}_{t}} \geq K\}\;.
\end{equs}
Note that in view of Remark \ref{rem:v_reg}, the condition on the norm $\Vert \bar{v} \Vert_{\mathcal{C}^{1+\alpha_\star}_{\alpha_\star/2, t}}$ is reasonable. For any two letters $i, j \in \mathcal{A}$ we define the process
\begin{equ}
\mathcal{H}^{i, j}_\varepsilon(t,x) := \Lambda \delta_{i, j} - \angles{D_\varepsilon \mathbf{X}_{\varepsilon}(t;x)}{e_i \otimes e_j}\;,
\end{equ}
where $\delta$ is the Kronecker delta. To have a priori bounds on the corresponding $\varepsilon$-quantities we introduce the stopping time
\begin{equs}
&\sigma_{K, \varepsilon} := \inf\{t \geq 0\,:\, \Vert X - X_\varepsilon \Vert_{\mathcal{C}_{t}^{\alpha_\star}} \geq 1, ~\text{ or }~ \VERT \mathbf{X} - \mathbf{X}_\varepsilon \VERT_{\alpha_\star, t} \geq 1,\\
&~\text{ or }~ \Vert \mathcal{H}_\varepsilon \Vert_{\mathcal{C}_{t}^{-\frac{1}{2} + \alpha}} \geq 1, ~\text{ or }~ \Vert \bar{v} - v_\varepsilon \Vert_{\mathcal{C}^{\alpha}_{t}} \geq 1, ~\text{ or }~ \Vert \bar{v} - v_\varepsilon \Vert_{\mathcal{C}^{1}_{(1-\alpha)/2,t}} \geq 1 \}\;.
\end{equs}
The blow-up of the norm $\Vert \bar{v}(t) - v_\varepsilon(t) \Vert_{\mathcal{C}^{1}}$ comes from the regularization property of the heat semigroup and the fact that we work in the $\alpha$-regular spaces, i.e. we use the bound
\[
\Vert U(t) \Vert_{\mathcal{C}^1} \lesssim t^{\frac{\alpha-1}{2}} \left( \Vert u^0 \Vert_{\mathcal{C}^\alpha} + \Vert X(0) \Vert_{\mathcal{C}^\alpha} \right)\;.
\]
See Appendix \ref{appx:heat_sg} for the properties of the heat semigroup. Finally, for $T > 0$ whose value will be chosen in the proof of Theorem~\ref{thm:first}, we define the stopping time $\varrho_{K,\varepsilon} := \sigma_{K} \wedge \sigma_{K, \varepsilon} \wedge T$ and write in what follows 
\begin{equ}[e:StoppedTime]
t_{\varepsilon} := t \wedge \varrho_{K, \varepsilon}\;.
\end{equ}

\begin{remark}
\label{rem:stop_time}
In the article we always consider time intervals up to the stopping time $\varrho_{K, \varepsilon}$. Therefore, all the quantities involved in the definition of $\varrho_{K, \varepsilon}$ are bounded by $K + 1$ and all the proportionality constants can depend on $K$.
\end{remark}

Before providing a proof of Theorem~\ref{thm:first}, we establish in the following three sections certain bounds on the terms of \eqref{eq:u_mild} and \eqref{eq:u_eps_mild_new}.


\section{Estimates on the reaction term}
\label{sec:reaction}

In this section we prove convergence of the reaction terms of the approximate equation \eqref{eq:u_eps_mild_new} to the corresponding terms of \eqref{eq:u_mild}. Let us recall the notation \eqref{e:StoppedTime} and Remark \ref{rem:stop_time}, which says that all the quantities involved in the definition of the stopping time $\varrho_{K,\varepsilon}$ are bounded on the interval $(0, t_\varepsilon]$ by the constant $K + 1$ and all the proportionality constants below can depend on $K$.

The next proposition gives a bound on the terms $\mathbf{G}^{\bar{v}}$ and $\mathbf{G}_{\varepsilon}^{v_{\varepsilon}}$ defined in \eqref{eq:u_mild_terms} and \eqref{eq:Phi_Psi_eps} respectively.

\begin{proposition}
\label{prop:Psi_Psi_eps}
For any $\gamma \in (0, 1]$, $t > 0$ and $\kappa > 0$ small enough the following bound holds
\begin{equs}[eq:Psi_Psi_eps]
\Vert \mathbf{G}^{\bar{v}}(t_\varepsilon) - \mathbf{G}_{\varepsilon}^{v_{\varepsilon}}(t_\varepsilon) \Vert_{\mathcal{C}^{\gamma}} &\lesssim t_\varepsilon^{\frac{1 + \alpha - \gamma}{2}} \left( \Vert \bar{v} - v_{\varepsilon} \Vert_{\mathcal{C}^{\alpha}_{t_\varepsilon}} + \Vert \bar{v} - v_{\varepsilon} \Vert_{\mathcal{C}^{1}_{(1-\alpha)/2, t_\varepsilon}} \right) \\
&\quad+\Vert X - X_{\varepsilon} \Vert_{\mathcal{C}^{\alpha}_{t_\varepsilon}} + \Vert u^0 - u^0_{\varepsilon} \Vert_{\mathcal{C}^{\alpha}} + \varepsilon^{\alpha_\star - \kappa}\;.
\end{equs}
\end{proposition}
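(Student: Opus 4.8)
The plan is to estimate the difference $\mathbf{G}^{\bar v}(t_\eps) - \mathbf{G}_\eps^{v_\eps}(t_\eps)$ by splitting it into several pieces according to which ingredient is being compared. Recall that
\[
\mathbf{G}^{\bar v}(t) = \int_0^t S_{t-s} G(\bar u(s))\,\partial_x(\bar v(s) + U(s))\,ds\;,\qquad
\mathbf{G}_\eps^{v_\eps}(t) = \int_0^t S^{(\eps)}_{t-s} G(u_\eps(s))\, D_\eps(v_\eps(s) + U_\eps(s))\,ds\;,
\]
so the natural decomposition is: (i) replace $S^{(\eps)}$ by $S$ in the $\eps$-term, using the semigroup comparison estimates from Appendix~\ref{appx:heat_sg}; (ii) replace $D_\eps$ acting on $v_\eps + U_\eps$ by $\partial_x$ acting on the same function, controlling the error by the fact (Assumption~\ref{as:third}, all moments of $\mu$ finite, together with Remark~\ref{rem:v_reg} giving $\bar v \in \mathcal{C}^{1+\alpha_\star}_{\alpha_\star/2,T}$) that $D_\eps w - \partial_x w$ is small in a suitable Hölder norm when $w$ has a little more than $\mathcal{C}^1$ regularity; (iii) replace $G(u_\eps)$ by $G(\bar u)$, using that $G\in\mathcal{C}^\infty$ is Lipschitz on bounded sets (we are below the stopping time, so everything is bounded by $K+1$) and that $\bar u - u_\eps = (\bar v - v_\eps) + (X - X_\eps) + (U - U_\eps)$; and (iv) the remaining genuine term $\int_0^t S_{t-s} G(\bar u(s))\,\partial_x\big((\bar v - v_\eps)(s) + (U - U_\eps)(s)\big)\,ds$, estimated directly.

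For the quantitative bounds I would proceed as follows. In step~(iv), write $S_{t-s}\partial_x = \partial_x S_{t-s}$ and use the standard smoothing estimate $\Vert \partial_x S_{t-s} f\Vert_{\mathcal{C}^\gamma} \lesssim (t-s)^{-\frac{1+\gamma-\alpha}{2}}\Vert f\Vert_{\mathcal{C}^\alpha}$, which is integrable in $s$ near $t$ for $\gamma < 1 + \alpha$, producing the factor $t_\eps^{\frac{1+\alpha-\gamma}{2}}$ in front of $\Vert \bar v - v_\eps\Vert_{\mathcal{C}^\alpha_{t_\eps}}$. The blow-up term $\Vert \bar v - v_\eps\Vert_{\mathcal{C}^1_{(1-\alpha)/2,t_\eps}}$ enters here as well when one wants to trade the $\mathcal{C}^\alpha$-norm of $\bar v - v_\eps$ for its $\mathcal{C}^1$-norm near $s=0$ (interpolating against the weighted norm). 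For $U - U_\eps = S_t(u^0 - X(0)) - S^{(\eps)}_t(u_\eps^0 - X_\eps(0))$, I would again split into a semigroup-difference part and an initial-data-difference part; the latter contributes $\Vert u^0 - u^0_\eps\Vert_{\mathcal{C}^\alpha} + \Vert X(0) - X_\eps(0)\Vert_{\mathcal{C}^\alpha} \lesssim \Vert u^0-u^0_\eps\Vert_{\mathcal{C}^\alpha} + \Vert X - X_\eps\Vert_{\mathcal{C}^\alpha_{t_\eps}}$, while the semigroup difference, quantified in Appendix~\ref{appx:heat_sg}, is what produces the clean $\eps^{\alpha_\star - \kappa}$ remainder. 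In step~(iii) the Lipschitz bound on $G$ times $\Vert \partial_x(v_\eps + U_\eps)\Vert_{\mathcal{C}^{\alpha-1}}$ (bounded below the stopping time, up to the $(1-\alpha)/2$-weight from $U_\eps$) gives a contribution controlled by $\Vert \bar u - u_\eps\Vert_{\mathcal{C}^0}$, which unpacks into the three terms $\Vert \bar v - v_\eps\Vert$, $\Vert X - X_\eps\Vert$, and the $U-U_\eps$ piece already handled.

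The term I expect to be the main obstacle is step~(ii), the comparison of $D_\eps$ with $\partial_x$. One has $(D_\eps w)(x) - (\partial_x w)(x) = \frac1\eps\int_{\real}\big(w(x+\eps y) - w(x) - \eps y\, \partial_x w(x)\big)\mu(dy)$ using $\mu(\real)=0$ and $\int y\,\mu(dy)=1$; a Taylor expansion to one more order, legitimate because $w = v_\eps(s) + U_\eps(s)$ has regularity slightly above $\mathcal{C}^1$ (this is exactly why Remark~\ref{rem:v_reg} is invoked and why one pays the weight $s^{(1-\alpha)/2}$ near the origin for $U_\eps$), yields a bound of order $\eps^{\alpha_\star}$ times a Hölder norm of $w$ — provided the second moment $\int |y|^{1+\alpha_\star}|\mu|(dy)$ is finite, which Assumption~\ref{as:third} guarantees. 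Getting the interplay right between the amount of extra regularity available, the weight in time near $s=0$, and the exponent $\alpha_\star-\kappa$ (the $\kappa$ absorbing the interpolation loss) is the delicate bookkeeping; everything else is a routine assembly of semigroup estimates and Lipschitz bounds, all valid because we are working below the stopping time $\varrho_{K,\eps}$ so all relevant norms are bounded by $K+1$ (Remark~\ref{rem:stop_time}). Summing the four contributions and renaming constants gives exactly \eqref{eq:Psi_Psi_eps}.
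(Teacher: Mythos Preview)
Your decomposition into four pieces is natural and close to the paper's own splitting, but there is a genuine gap in step~(ii). You apply the comparison $D_\eps - \partial_x$ to $w = v_\eps(s) + U_\eps(s)$ and justify this by quoting Remark~\ref{rem:v_reg}. That remark, however, gives $\bar v \in \mathcal{C}^{1+\alpha_\star}_{\alpha_\star/2,T}$, and accordingly the stopping time $\sigma_K$ controls $\Vert \bar v \Vert_{\mathcal{C}^{1+\alpha_\star}_{\alpha_\star/2,t}}$. For $v_\eps$ the stopping time only guarantees $\Vert v_\eps \Vert_{\mathcal{C}^1_t} \le K$; there is no a~priori bound on $\Vert v_\eps \Vert_{\mathcal{C}^{1+\alpha_\star}}$ available below $\varrho_{K,\eps}$. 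Your Taylor estimate
\[
\Vert (D_\eps - \partial_x) w \Vert_{\mathcal{C}^0} \lesssim \eps^{\alpha_\star} \Vert w \Vert_{\mathcal{C}^{1+\alpha_\star}}
\]
therefore cannot be applied to $v_\eps$, and with only $\mathcal{C}^1$ control the bound degenerates to $O(1)$, losing the $\eps^{\alpha_\star}$ factor you need.

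The fix is precisely the reordering the paper uses: perform the $\partial_x$ versus $D_\eps$ comparison on the $\bar v$ (and $U$) side, where the $\mathcal{C}^{1+\alpha_\star}$ regularity is available, and compare $\bar v$ to $v_\eps$ while both are under $D_\eps$, using only $\Vert D_\eps(\bar v - v_\eps)\Vert_{\mathcal{C}^0} \lesssim \Vert \bar v - v_\eps \Vert_{\mathcal{C}^1}$ (which needs no extra regularity). Concretely, the paper writes
\[
\partial_x(\bar v + U) - D_\eps(v_\eps + U_\eps) = (\partial_x - D_\eps)(\bar v + U) + D_\eps\bigl((\bar v - v_\eps) + (U - U_\eps)\bigr)\;,
\]
producing the terms $J_1$--$J_4$, with $J_5$, $J_6$ matching your steps~(iii), (i). Your step~(iv) then never appears: the $\bar v - v_\eps$ contribution enters through $D_\eps$, not through $\partial_x$, and is controlled directly by the weighted $\mathcal{C}^1$ norm $\Vert \bar v - v_\eps \Vert_{\mathcal{C}^1_{(1-\alpha)/2,t_\eps}}$. (Incidentally, in your step~(iv) the identity $S_{t-s}\partial_x = \partial_x S_{t-s}$ is not directly usable because of the multiplicative factor $G(\bar u)$; one should instead use $\Vert S_{t-s}\Vert_{\mathcal{C}^0 \to \mathcal{C}^\gamma}$ and bound $\Vert G(\bar u)\,\partial_x(\cdot)\Vert_{\mathcal{C}^0}$, but this point becomes moot once the ordering is corrected.)
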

\begin{proof}
For any $t > 0$, using the notation \eqref{e:StoppedTime}, we can rewrite
\begin{equs}
\mathbf{G}^{\bar{v}}(t_\varepsilon) &- \mathbf{G}_{\varepsilon}^{v_{\varepsilon}}(t_\varepsilon) = \int_{0}^{t_\varepsilon} S_{t_\varepsilon-s} G(\bar{u}(s)) \big(\partial_x \bar{v}(s) - D_\varepsilon \bar{v}(s) \big) \,ds \\
&+ \int_{0}^{t_\varepsilon} S_{t_\varepsilon-s} G(\bar{u}(s)) \big(\partial_x U(s) - D_\varepsilon U(s) \big) \,ds \\
&+ \int_{0}^{t_\varepsilon} S_{t_\varepsilon-s} G(\bar{u}(s)) \big(D_\varepsilon \bar{v}(s) - D_\varepsilon v_{\varepsilon}(s) \big) \,ds \\
&+ \int_{0}^{t_\varepsilon} S_{t_\varepsilon-s} G(\bar{u}(s)) \big(D_\varepsilon U(s) - D_\varepsilon U_{\varepsilon}(s) \big) \,ds \\
&+ \int_{0}^{t_\varepsilon} S_{t_\varepsilon-s} \big( G(\bar{u}(s)) - G(u_\varepsilon(s))\big)  D_\varepsilon \left(v_{\varepsilon}(s) + U_{\varepsilon}(s) \right) \,ds \\
&+ \int_{0}^{t_\varepsilon} \big( S_{t_\varepsilon-s} - S^{(\eps)}_{t_\varepsilon-s} \big) G(u_\varepsilon(s)) D_\varepsilon \left(v_{\varepsilon}(s) + U_{\varepsilon}(s) \right) \,ds =: \sum_{1\leq j \leq 6} J_j\;.
\end{equs}

To bound the term $J_1$, we first investigate how good the operator $D_\varepsilon$ approximates $\partial_x$. Let us take a function $\varphi \in \mathcal{C}^{1+\alpha_\star}(\mathbb{T})$. Then by the Assumption \ref{as:third}, we can rewrite
\begin{equs}
\left(D_\varepsilon - \partial_x \right) \varphi(x) = \frac{1}{\varepsilon} \int_{\real} \left(\varphi(x + \varepsilon y) - \varphi(x) - \partial_x \varphi(x) \varepsilon y \right) \mu(dy)\;.
\end{equs}
Using the fact, that the H\"{o}lder regularity of $\varphi$ is $1 + \alpha_\star$, we obtain
\[
\left|\varphi(x + \varepsilon y) - \varphi(x) - \partial_x \varphi(x) \varepsilon y \right| \lesssim |\varepsilon y|^{1 + \alpha_\star} \Vert \varphi \Vert_{\mathcal{C}^{1+\alpha_\star}}\;.
\]
This yields the estimate
\begin{equs}[eq:D_approx]
\Vert \left(D_\varepsilon - \partial_x \right) \varphi \Vert_{\mathcal{C}^0} \lesssim \varepsilon^{\alpha_\star} \Vert \varphi \Vert_{\mathcal{C}^{1+\alpha_\star}}\;,
\end{equs}
where we have used the boundedness of the $(1+\alpha_\star)$th moment of $\mu$. 

Using this estimate we derive
\begin{equs}
\Vert J_1 \Vert_{\mathcal{C}^{\gamma}} &\leq \int_{0}^{t_\varepsilon} \Vert S_{t_\varepsilon-s} \Vert_{\mathcal{C}^{0} \rightarrow \mathcal{C}^{\gamma}} \Vert G(\bar{u}(s)) \Vert_{\mathcal{C}^{0}} \Vert \partial_x \bar{v}(s) - D_\varepsilon \bar{v}(s) \Vert_{\mathcal{C}^{0}} \,ds\\
&\lesssim \varepsilon^{\alpha_\star} \int_{0}^{t_\varepsilon} (t_\varepsilon-s)^{-\frac{\gamma}{2}} \Vert \bar{v}(s) \Vert_{\mathcal{C}^{1 + \alpha_\star}} \,ds \lesssim \varepsilon^{\alpha_\star} t_\varepsilon^{1 - \frac{\gamma + \alpha_\star}{2}}\;,\label{eq:Psi_1}
\end{equs}
where we have used boundedness of $\Vert \bar{u} \Vert_{\mathcal{C}^{0}_{t_\varepsilon}}$ and $\Vert \bar{v} \Vert_{\mathcal{C}^{1+\alpha_\star}_{\alpha_\star/2, t_\varepsilon}}$.

To derive a bound on $J_2$, we notice that
\[
\Vert U(s) \Vert_{\mathcal{C}^{1 + \alpha_\star}} \lesssim s^{-\frac{1}{2}} \left(\Vert u^0 \Vert_{\mathcal{C}^{\alpha_\star}} + \Vert X(0) \Vert_{\mathcal{C}^{\alpha_\star}} \right),
\]
which follows from Lemma \ref{lem:S_reg}. Hence, using the estimate \eqref{eq:D_approx} for $U$, we obtain
\begin{equs}
\Vert J_2 \Vert_{\mathcal{C}^{\gamma}} &\leq \int_{0}^{t_\varepsilon} \Vert S_{t_\varepsilon-s} \Vert_{\mathcal{C}^{0} \rightarrow \mathcal{C}^{\gamma}} \Vert G(\bar{u}(s)) \Vert_{\mathcal{C}^{0}} \Vert \partial_x U(s) - D_\varepsilon U(s) \Vert_{\mathcal{C}^{0}} \,ds\\
&\lesssim \varepsilon^{\alpha_\star} \int_{0}^{t_\varepsilon} (t_\varepsilon-s)^{-\frac{\gamma}{2}} \Vert U(s) \Vert_{\mathcal{C}^{1 + \alpha_\star}} \,ds \lesssim \varepsilon^{\alpha_\star} t_\varepsilon^{\frac{1 - \gamma}{2}}\;.\label{eq:Psi_2}
\end{equs}

Note, that for any function $\varphi \in \mathcal{C}^1(\mathbb{T})$ we have by Assumption \ref{as:third},
\begin{equ}[eq:D_eps_bound]
|D_\varepsilon \varphi(x)| \leq \frac{1}{\varepsilon} \int_{\real} \int_0^{\varepsilon |z|} |\partial_x \varphi(x + y)| dy |\mu|(dz) \lesssim \Vert \varphi \Vert_{\mathcal{C}^{1}}\;.
\end{equ}
Using this bound we obtain 
\begin{equs}
\Vert &J_3 \Vert_{\mathcal{C}^{\gamma}} \leq \int_{0}^{t_\varepsilon} \Vert S_{t_\varepsilon-s} \Vert_{\mathcal{C}^{0} \rightarrow \mathcal{C}^{\gamma}} \Vert G(\bar{u}(s)) \Vert_{\mathcal{C}^{0}} \Vert D_\varepsilon \bar{v}(s) - D_\varepsilon v_{\varepsilon}(s) \Vert_{\mathcal{C}^{0}} \,ds \label{eq:Psi_3}\\
&\lesssim \Vert \bar{v} - v_{\varepsilon} \Vert_{\mathcal{C}^{1}_{(1-\alpha)/2, t_\varepsilon}} \int_{0}^{t_\varepsilon} (t_\varepsilon-s)^{-\frac{\gamma}{2}} s^{\frac{\alpha - 1}{2}} \,ds \lesssim {t_\varepsilon}^{\frac{1 + \alpha - \gamma}{2}} \Vert \bar{v} - v_{\varepsilon} \Vert_{\mathcal{C}^{1}_{(1-\alpha)/2, t_\varepsilon}}\;,
\end{equs}
where we have used boundedness of $\Vert \bar{u} \Vert_{\mathcal{C}^{0}_{t_\varepsilon}}$.

To bound $J_4$ we note that
\begin{equs}
\Vert U(s) - U_{\varepsilon}(s) \Vert_{\mathcal{C}^{1}} &\leq \Vert S_s \big( u^0 - u_\varepsilon^0 \big) \Vert_{\mathcal{C}^{1}} + \Vert S_s \left( X(0) - X_\varepsilon(0) \right) \Vert_{\mathcal{C}^{1}} \\
&\quad+ \Vert \big(S_s - S^{(\eps)}_s\big) \left( u_\varepsilon^0 - X_\varepsilon(0) \right) \Vert_{\mathcal{C}^{1}}\\
&\lesssim s^{\frac{\alpha - 1}{2}} \left(\Vert u^0 - u_\varepsilon^0 \Vert_{\mathcal{C}^{\alpha}} + \Vert X(0) - X_\varepsilon(0) \Vert_{\mathcal{C}^{\alpha}} \right) \\
&\quad+ s^{-\frac{1}{2}} \varepsilon^{\alpha_\star - \kappa} \left(\Vert u_\varepsilon^0 \Vert_{\mathcal{C}^{\alpha_\star}} + \Vert X_\varepsilon(0) \Vert_{\mathcal{C}^{\alpha_\star}} \right),\label{eq:initial_eps}
\end{equs}
for any $\kappa > 0$ sufficiently small. Here, in the last estimate we used Lemma \ref{lem:S_S_eps} with $\lambda = \alpha_\star - \kappa$. Using this estimate and \eqref{eq:D_eps_bound} we obtain
\begin{equs}
\Vert J_4 \Vert_{\mathcal{C}^{\gamma}} &\leq \int_{0}^{t_\varepsilon} \Vert S_{t_\varepsilon-s} \Vert_{\mathcal{C}^{0} \rightarrow \mathcal{C}^{\gamma}} \Vert G(\bar{u}(s)) \Vert_{\mathcal{C}^{0}} \Vert D_\varepsilon U(s) - D_\varepsilon U_{\varepsilon}(s) \Vert_{\mathcal{C}^{0}} \,ds \\
&\lesssim \int_{0}^{t_\varepsilon} (t_\varepsilon-s)^{-\frac{\gamma}{2}} \Vert U(s) - U_{\varepsilon}(s) \Vert_{\mathcal{C}^{1}} \,ds\\ &\lesssim t_\varepsilon^{\frac{1 + \alpha - \gamma}{2}} \left(\Vert u^0 - u_\varepsilon^0 \Vert_{\mathcal{C}^{\alpha}} + \Vert X(0) - X_\varepsilon(0) \Vert_{\mathcal{C}^{\alpha}} \right) + \varepsilon^{\alpha_\star - \kappa}\;.\label{eq:Psi_4}
\end{equs}

Exploiting continuous differentiability of the function $G$ we get
\begin{equs}
\Vert J_5 \Vert_{\mathcal{C}^{\gamma}} &\leq \int_{0}^{t_\varepsilon} \Vert S_{t_\varepsilon-s} \Vert_{\mathcal{C}^{0} \rightarrow \mathcal{C}^{\gamma}} \Vert G(\bar{u}(s)) - G(u_\varepsilon(s))\Vert_{\mathcal{C}^{0}} \Vert D_\varepsilon v_{\varepsilon}(s) + D_\varepsilon U_{\varepsilon}(s) \Vert_{\mathcal{C}^{0}} \,ds\\
&\lesssim \int_{0}^{t_\varepsilon} (t_\varepsilon-s)^{- \frac{\gamma}{2}} s^{\frac{\alpha_\star - 1 - \kappa}{2}} \Vert \bar{u}(s) - u_\varepsilon(s)\Vert_{\mathcal{C}^{0}} \,ds \label{eq:Psi_5}\\
&\lesssim t_\varepsilon^{\frac{1 + \alpha_\star - \gamma - \kappa}{2}} \Vert \bar{v} - v_{\varepsilon} \Vert_{\mathcal{C}^{\alpha}_{t_\varepsilon}} + \Vert X - X_{\varepsilon} \Vert_{\mathcal{C}^{\alpha}_{t_\varepsilon}} + \Vert u^0 - u_\varepsilon^0 \Vert_{\mathcal{C}^{\alpha}} + \varepsilon^{\alpha_\star - \kappa}\;,
\end{equs}
where in the second line we have used a bound, similar to \eqref{eq:initial_eps},
\begin{equ}[eq:D_eps_U_eps_bound]
\Vert D_\varepsilon U_{\varepsilon}(s)\Vert_{\mathcal{C}^{0}} \lesssim \Vert U_{\varepsilon}(s) \Vert_{\mathcal{C}^{1}} \lesssim s^{\frac{\alpha_\star - 1 - \kappa}{2}}\;.
\end{equ}
Moreover, in the estimate \eqref{eq:Psi_5} we have used the bound
\begin{equs}[eq:U_U_eps_0]
\Vert U(s) - U_{\varepsilon}(s) \Vert_{\mathcal{C}^{0}} \lesssim \Vert u^0 - u_\varepsilon^0 \Vert_{\mathcal{C}^{0}} + \Vert X(0) - X_\varepsilon(0) \Vert_{\mathcal{C}^{0}} + \varepsilon^{\alpha_\star - \kappa}\;,
\end{equs}
which is obtained in a way similar to \eqref{eq:initial_eps}.

Using Lemma \ref{lem:S_S_eps}, the integral $J_6$ can be bounded by
\begin{equs}
\Vert J_6 \Vert_{\mathcal{C}^{\gamma}} &\leq \int_{0}^{t_\varepsilon} \Vert S_{t_\varepsilon-s} - S^{(\eps)}_{t_\varepsilon-s} \Vert_{\mathcal{C}^{0} \rightarrow \mathcal{C}^{\gamma}} \Vert G(u_\varepsilon(s)) \Vert_{\mathcal{C}^{0}} \Vert D_\varepsilon v_{\varepsilon}(s) + D_\varepsilon U_{\varepsilon}(s) \Vert_{\mathcal{C}^{0}} \,ds\\
&\lesssim \varepsilon^{\alpha_\star - \kappa} \int_{0}^{t_\varepsilon} (t_\varepsilon-s)^{ - \frac{\alpha_\star + \gamma - \kappa/2}{2}} s^{\frac{\alpha_\star - 1 - \kappa/2}{2}} \,ds \lesssim t_\varepsilon^{\frac{1-\gamma}{2}} \varepsilon^{\alpha_\star - \kappa}\;,\label{eq:Psi_6}
\end{equs}
where we have used the bound \eqref{eq:D_eps_U_eps_bound}.

Combining the bounds \eqref{eq:Psi_1} -- \eqref{eq:Psi_6} we obtain the claimed estimate \eqref{eq:Psi_Psi_eps}.
\end{proof}

In the following proposition we provide a bound on the terms $\mathbf{F}^{\bar{v}}$ and $\mathbf{F}_{\varepsilon}^{v_{\varepsilon}}$ defined in \eqref{eq:u_mild_terms} and \eqref{eq:Phi_Psi_eps} respectively.

\begin{proposition}
\label{prop:Phi_Phi_eps}
For any $\gamma \in (0,1]$ and $\kappa > 0$ small enough the following bound holds
\begin{equ}
\Vert \mathbf{F}^{\bar{v}}(t_\varepsilon) - \mathbf{F}_{\varepsilon}^{v_{\varepsilon}}(t_\varepsilon) \Vert_{\mathcal{C}^{\gamma}} \lesssim t_\varepsilon^{1 - \frac{\gamma}{2}} \Vert \bar{v} - v_{\varepsilon} \Vert_{\mathcal{C}^{0}_{t_\varepsilon}} + \Vert X - X_{\varepsilon} \Vert_{\mathcal{C}^{0}_{t_\varepsilon}} + \Vert u^0 - u_{\varepsilon}^0 \Vert_{\mathcal{C}^{0}} + \varepsilon^{\alpha_\star - \kappa}\;.
\end{equ}
\end{proposition}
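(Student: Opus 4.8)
The plan is to mimic the proof of Proposition~\ref{prop:Psi_Psi_eps}, which becomes considerably shorter here because the term $F$ contains no spatial derivative, so neither the operator $D_\varepsilon$ nor the quantities measuring how well $D_\varepsilon$ approximates $\partial_x$ enter the estimate. Using the notation \eqref{e:StoppedTime}, I would first split the difference into a reaction part and a semigroup part,
\begin{equ}
\mathbf{F}^{\bar{v}}(t_\varepsilon) - \mathbf{F}_{\varepsilon}^{v_{\varepsilon}}(t_\varepsilon) = \int_{0}^{t_\varepsilon} S_{t_\varepsilon-s} \big( F(\bar{u}(s)) - F(u_\varepsilon(s)) \big)\,ds + \int_{0}^{t_\varepsilon} \big( S_{t_\varepsilon-s} - S^{(\eps)}_{t_\varepsilon-s} \big) F(u_\varepsilon(s))\,ds =: I_1 + I_2\;,
\end{equ}
and then estimate $I_1$ and $I_2$ separately in $\mathcal{C}^\gamma$.

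For $I_1$, the key observation is that on $(0,t_\varepsilon]$ both $\bar u$ and $u_\varepsilon$ remain in a fixed bounded set — this follows from the definition of $\varrho_{K,\varepsilon}$ together with Remark~\ref{rem:stop_time} — so that the $\mathcal{C}^1$ function $F$ is Lipschitz there and $\Vert F(\bar u(s)) - F(u_\varepsilon(s))\Vert_{\mathcal{C}^0} \lesssim \Vert \bar u(s) - u_\varepsilon(s)\Vert_{\mathcal{C}^0}$. I would then write $\bar u - u_\varepsilon = (\bar v - v_\varepsilon) + (X - X_\varepsilon) + (U - U_\varepsilon)$ and control the last summand via the bound \eqref{eq:U_U_eps_0} on $\Vert U - U_\varepsilon\Vert_{\mathcal{C}^0}$ (obtained as in \eqref{eq:initial_eps} from the heat-semigroup estimates of Lemma~\ref{lem:S_S_eps}). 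Combined with the smoothing bound $\Vert S_{t_\varepsilon - s}\Vert_{\mathcal{C}^0 \to \mathcal{C}^\gamma} \lesssim (t_\varepsilon - s)^{-\gamma/2}$ from Lemma~\ref{lem:S_reg} and the elementary integral $\int_0^{t_\varepsilon}(t_\varepsilon - s)^{-\gamma/2}\,ds \lesssim t_\varepsilon^{1-\gamma/2}$, this produces a bound of the form $t_\varepsilon^{1-\gamma/2}$ times the full parenthesis; since $t_\varepsilon \le T$ is bounded, the prefactor can be dropped on the $\Vert X - X_\varepsilon\Vert$, $\Vert u^0 - u^0_\varepsilon\Vert$ and $\varepsilon^{\alpha_\star - \kappa}$ terms, leaving precisely the form in the statement.

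For $I_2$ I would invoke Lemma~\ref{lem:S_S_eps} with $\lambda = \alpha_\star - \kappa$ to get $\Vert S_{t_\varepsilon - s} - S^{(\eps)}_{t_\varepsilon - s}\Vert_{\mathcal{C}^0 \to \mathcal{C}^\gamma} \lesssim \varepsilon^{\alpha_\star - \kappa}(t_\varepsilon - s)^{-(\gamma + \alpha_\star - \kappa)/2}$, use $\Vert F(u_\varepsilon(s))\Vert_{\mathcal{C}^0} \lesssim 1$ (again from boundedness of $u_\varepsilon$ up to $\varrho_{K,\varepsilon}$ and continuity of $F$), and note that the exponent $-(\gamma + \alpha_\star - \kappa)/2$ is $>-1$ for $\gamma \le 1$, $\alpha_\star < \tfrac12$ and $\kappa$ small, so the time integral converges and is bounded uniformly in $\varepsilon$; this yields $\Vert I_2\Vert_{\mathcal{C}^\gamma} \lesssim \varepsilon^{\alpha_\star - \kappa}$. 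Adding the two bounds gives the claim. There is no genuine obstacle in this argument — the only points requiring care are the appeal to the stopping time and Remark~\ref{rem:stop_time} to obtain the Lipschitz and boundedness properties of $F$, and the reuse of the $U - U_\varepsilon$ estimate \eqref{eq:U_U_eps_0}, both of which are already available.
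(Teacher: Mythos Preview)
Your proposal is correct and follows essentially the same approach as the paper: the same two-term splitting $I_1+I_2$, the Lipschitz estimate on $F$ combined with \eqref{eq:U_U_eps_0} for $I_1$, and Lemma~\ref{lem:S_S_eps} together with boundedness of $F(u_\varepsilon)$ for $I_2$. The only cosmetic difference is that the paper applies Lemma~\ref{lem:S_S_eps} with $\lambda=\tfrac12-\kappa$ rather than your $\lambda=\alpha_\star-\kappa$; both choices give an integrable singularity and the required $\varepsilon^{\alpha_\star-\kappa}$.
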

\begin{proof}
Using continuous differentiability of the function $F$, Lemma \ref{lem:S_S_eps} and recalling that $\bar{u} = \bar{v} + X + U$ we get
\begin{equs}
\Vert \mathbf{F}^{\bar{v}}(t_\varepsilon) &- \mathbf{F}^{v_{\varepsilon}}_{\varepsilon}(t_\varepsilon) \Vert_{\mathcal{C}^{\gamma}} \leq \int_{0}^{t_\varepsilon} \Vert S_{t_\varepsilon-s} \Vert_{\mathcal{C}^{0} \rightarrow \mathcal{C}^{\gamma}} \Vert F(\bar{u}(s)) - F(u_\varepsilon(s))\Vert_{\mathcal{C}^{0}} \,ds \\
&\quad+ \int_{0}^{t_\varepsilon} \Vert S_{t_\varepsilon-s} - S^{(\eps)}_{t_\varepsilon-s} \Vert_{\mathcal{C}^{0} \rightarrow \mathcal{C}^{\gamma}} \Vert F(u_\varepsilon(s)) \Vert_{\mathcal{C}^{0}} \,ds\\
&\lesssim \int_0^{t_\varepsilon} (t_\varepsilon-s)^{-\frac{\gamma}{2}} \Vert \bar{u}(s) - u_{\varepsilon}(s) \Vert_{\mathcal{C}^{0}} \,ds + \varepsilon^{\frac{1}{2} - \kappa} \int_0^{t_\varepsilon} (t_\varepsilon-s)^{-\frac{1}{4} - \frac{\gamma}{2}} \,ds \\
&\lesssim t_\varepsilon^{1 - \frac{\gamma}{2}} \Vert \bar{v} - v_{\varepsilon} \Vert_{\mathcal{C}^{0}_{t_\varepsilon}} + \Vert X - X_{\varepsilon} \Vert_{\mathcal{C}^{0}_{t_\varepsilon}} + \Vert u^0 - u_{\varepsilon}^0 \Vert_{\mathcal{C}^{0}} + \varepsilon^{\alpha_\star - \kappa}\;.
\end{equs}
Here, we have used boundedness of $\Vert u_\varepsilon \Vert_{\mathcal{C}^0_{t_\varepsilon}}$ and the estimate \eqref{eq:U_U_eps_0}.
\end{proof}

The following lemma shows how the processes \eqref{eq:D_eps_X_def} behave in the supremum norm. In particular,
it shows that they converge to $0$ as soon as $|w| > 2$.

\begin{lemma}
\label{lem:X_gamma_eps}
For any word $w \in \mathcal{A}_p$, the bound
\begin{equ}
\sup_{s \in [0,t_\varepsilon]} \Vert \angles{D_\varepsilon \mathbf{X}_{\varepsilon}(s;\cdot)}{e_w} \Vert_{\mathcal{C}^{0}} \lesssim \varepsilon^{|w|\alpha_{\star}-1}\;,
\end{equ}
holds uniformly in $\eps$ and $t$.
\end{lemma}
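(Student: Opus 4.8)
My plan is to estimate $\angles{D_\varepsilon\mathbf{X}_\varepsilon(s;y)}{e_w}$ pointwise, working directly from its definition \eqref{eq:D_eps_X_def} and using the H\"older regularity of order $|w|\alpha_\star$ of the iterated integrals of $\mathbf{X}_\varepsilon$ to absorb the $\varepsilon^{-1}$ prefactor, together with the moment bounds on $\mu$ from Assumption \ref{as:third}. First I would write, from \eqref{eq:D_eps_X_def} and the triangle inequality,
\[
\bigl|\angles{D_\varepsilon\mathbf{X}_\varepsilon(s;y)}{e_w}\bigr| \le \frac1\varepsilon\int_\real \bigl|\angles{\mathbf{X}_\varepsilon(s;y,y+\varepsilon z)}{e_w}\bigr|\,|\mu|(dz)\;.
\]
The case $w=\emptyset$ is immediate, and in fact stronger than claimed: $\angles{\mathbf{X}_\varepsilon(s;a,b)}{e_\emptyset}=1$, so the right-hand side equals $\varepsilon^{-1}\mu(\real)=0$ by Assumption \ref{as:third}. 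Hence I would assume $|w|\ge 1$ from now on.

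The key input is that, for $s\le t_\varepsilon$, the component $\angles{\mathbf{X}_\varepsilon(s)}{e_w}$ belongs to $\mathcal{B}^{|w|\alpha_\star}$ with a seminorm bounded uniformly in $\varepsilon$ and $s$: this is exactly what the stopping time $\varrho_{K,\varepsilon}$ buys us, since it forces $\VERT\mathbf{X}\VERT_{\alpha_\star,t_\varepsilon}\le K$ and $\VERT\mathbf{X}-\mathbf{X}_\varepsilon\VERT_{\alpha_\star,t_\varepsilon}\le 1$, hence $\VERT\mathbf{X}_\varepsilon\VERT_{\alpha_\star,t_\varepsilon}\le K+1$ (for words of length $\ge2$ the required control of these iterated integrals is the content of Lemma \ref{lem:X_X_eps}). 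Consequently $\bigl|\angles{\mathbf{X}_\varepsilon(s;y,y+\varepsilon z)}{e_w}\bigr|\lesssim|\varepsilon z|^{|w|\alpha_\star}$, uniformly in $s\in[0,t_\varepsilon]$, $y\in\mathbb{T}$ and $\varepsilon\in(0,1]$. Substituting this bound and changing variables gives
\[
\bigl|\angles{D_\varepsilon\mathbf{X}_\varepsilon(s;y)}{e_w}\bigr| \lesssim \varepsilon^{|w|\alpha_\star-1}\int_\real |z|^{|w|\alpha_\star}\,|\mu|(dz)\;,
\]
and the remaining integral is finite because $|\mu|$ is a finite measure with all integer moments finite (Assumption \ref{as:third}), so that $\int_\real|z|^{|w|\alpha_\star}|\mu|(dz)\le\int_\real\bigl(1+|z|^{\lceil|w|\alpha_\star\rceil}\bigr)|\mu|(dz)<\infty$. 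Taking the supremum over $y\in\mathbb{T}$ and over $s\in[0,t_\varepsilon]$ then yields the claimed bound, with an implicit constant depending only on $K$ and $\mu$ (cf.\ Remark \ref{rem:stop_time}).

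The only point that genuinely needs care is the uniform-in-$\varepsilon$ bound on $\Vert\angles{\mathbf{X}_\varepsilon(s)}{e_w}\Vert_{\mathcal{B}^{|w|\alpha_\star}}$ on the stochastic interval $[0,t_\varepsilon]$ --- in particular that it holds with the supremum in the $\mathcal{B}$-seminorm taken over \emph{all} pairs of arguments, including $y$ and $y+\varepsilon z$ for large $z$, where the increment wraps around $\mathbb{T}$ many times; here one uses periodicity of $X_\varepsilon$ together with the additivity property of $\mathbf{X}_\varepsilon$ (Definition \ref{def:RP}(2)) to see that the iterated integrals remain bounded on long intervals, so that the $\mathcal{B}^{|w|\alpha_\star}$-seminorm over $\real^2$ is controlled by the one over a single period. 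Everything else is an elementary scaling argument combined with the moment assumption on $\mu$.
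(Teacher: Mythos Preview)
Your argument is correct and is essentially the same as the paper's: bound the integrand pointwise by $\varepsilon^{-1}|\angles{\mathbf{X}_\varepsilon(s;y,y+\varepsilon z)}{e_w}|$, invoke the $\mathcal{B}^{|w|\alpha_\star}$-regularity of the rough path (Definition~\ref{def:RP}(3), with the uniform bound coming from the stopping time as in Remark~\ref{rem:stop_time}), and conclude using the moment assumption on $\mu$. Your separate treatment of $w=\emptyset$ and the remark on wrapping around $\mathbb{T}$ are extra care the paper does not spell out, but the core proof is identical.
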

\begin{proof}
Since $\mathbf{X}_{\varepsilon}(s)$ is a rough path of regularity $\alpha_\star$, we can use the third property in Definition \ref{def:RP} to get
\begin{equs}
\left| \angles{ D_{\varepsilon} \mathbf{X}_{\varepsilon}(s;x)}{e_w} \right| &\leq \frac{1}{\varepsilon} \int_{\real} \left|  \angles{ \mathbf{X}_{\varepsilon}(s;x,x+\varepsilon z)}{e_w} \right| |\mu|(dz) \\
&\lesssim \varepsilon^{|w|\alpha_{\star}-1} \int_{\real} |z|^{|w|\alpha_{\star}} |\mu|(dz) \lesssim \varepsilon^{|w|\alpha_{\star}-1}\;.
\end{equs}
Here, we have used the assumption on the moments of $|\mu|$.
\end{proof}

In the following proposition we obtain a bound on the term $\mathbf{\bar H}_{\varepsilon}^{v_{\varepsilon}}$ defined in \eqref{eq:Upsilon_eps}.

\begin{proposition}
\label{prop:U_bar}
For any $\gamma \in (0,1]$ we have the estimate $\Vert \mathbf{\bar H}_{\varepsilon}^{v_{\varepsilon}} \Vert_{\mathcal{C}_{t_\varepsilon}^{\gamma}} \lesssim \varepsilon^{3\alpha_{\star}-1}$.
\end{proposition}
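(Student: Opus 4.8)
The plan is to estimate each summand of $\mathbf{\bar H}_{\varepsilon}^{v_{\varepsilon}}$ in \eqref{eq:Upsilon_eps} directly, using the smoothing property of the approximate heat semigroup together with the decay estimate of Lemma~\ref{lem:X_gamma_eps}. Recall that the sum defining $\mathbf{\bar H}_{\varepsilon}^{v_{\varepsilon}}$ runs only over words $w \in \mathcal{A}_{p-1}$ with $|w| \geq 2$, so that Lemma~\ref{lem:X_gamma_eps} gives $\Vert \angles{D_\varepsilon \mathbf{X}_{\varepsilon}(s;\cdot)}{e_{wj}} \Vert_{\mathcal{C}^0} \lesssim \varepsilon^{(|w|+1)\alpha_\star - 1} \leq \varepsilon^{3\alpha_\star - 1}$, the worst case being $|w| = 2$. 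First I would fix such a word $w$ and estimate the integrand in the $\mathcal{C}^0$-norm: since $G \in \mathcal{C}^\infty$ and $\Vert u_\varepsilon \Vert_{\mathcal{C}^0_{t_\varepsilon}}$ is bounded by $K+1$ on $(0,t_\varepsilon]$ (Remark~\ref{rem:stop_time}), the factor $D^w G_{ij}(u_\varepsilon(s,\cdot))$ is bounded uniformly in $s$ and $\varepsilon$, so that
\[
\Vert D^{w} G_{ij}(u_{\varepsilon}(s,\cdot)) \angles{D_\varepsilon \mathbf{X}_{\varepsilon}(s;\cdot)}{e_{wj}} \Vert_{\mathcal{C}^0} \lesssim \varepsilon^{3\alpha_\star - 1}\;,
\]
uniformly in $s \in (0, t_\varepsilon]$.

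Next I would apply the semigroup smoothing bound $\Vert S^{(\eps)}_{t_\varepsilon - s} \Vert_{\mathcal{C}^0 \to \mathcal{C}^\gamma} \lesssim (t_\varepsilon - s)^{-\gamma/2}$ from Appendix~\ref{appx:heat_sg} to each term, obtaining
\[
\Vert \mathbf{\bar H}_{\varepsilon}^{v_{\varepsilon}}(t_\varepsilon) \Vert_{\mathcal{C}^\gamma} \lesssim \varepsilon^{3\alpha_\star - 1} \int_0^{t_\varepsilon} (t_\varepsilon - s)^{-\gamma/2}\, ds \lesssim t_\varepsilon^{1 - \gamma/2}\, \varepsilon^{3\alpha_\star - 1}\;,
\]
where the integral converges since $\gamma \leq 1 < 2$. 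Taking the supremum over $t \leq \varrho_{K,\varepsilon} \wedge T$ and absorbing the bounded factor $t_\varepsilon^{1-\gamma/2} \leq T^{1-\gamma/2}$ into the implicit constant (which is allowed to depend on $K$ and $T$) yields $\Vert \mathbf{\bar H}_{\varepsilon}^{v_{\varepsilon}} \Vert_{\mathcal{C}^\gamma_{t_\varepsilon}} \lesssim \varepsilon^{3\alpha_\star - 1}$. Summing the finitely many words $w$ does not change the power of $\varepsilon$.

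There is no serious obstacle here; the only point requiring a little care is to verify that $3\alpha_\star - 1 > 0$ for the bound to be useful — this holds provided $\alpha_\star > \frac{1}{3}$, i.e. $\alpha < \frac{1}{6}$, which is guaranteed by choosing $\alpha_0$ small enough in Theorem~\ref{thm:first}. (More generally, for longer words $|w| = k$ one gets the better exponent $(k+1)\alpha_\star - 1$, so the binding constraint indeed comes from $|w| = 2$.) One should also note that the statement is uniform in $\varepsilon$ and in the realisation of the noise on the stopped interval, since all quantities entering the estimate — the bound on $u_\varepsilon$, the derivatives of $G$, and the constant in Lemma~\ref{lem:X_gamma_eps} — are deterministic or controlled by the stopping time $\varrho_{K,\varepsilon}$.
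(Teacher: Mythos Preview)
Your proof is correct and follows essentially the same route as the paper: bound $D^w G_{ij}(u_\varepsilon)$ in $\mathcal{C}^0$ using the stopping time, apply Lemma~\ref{lem:X_gamma_eps} to get $\varepsilon^{(|w|+1)\alpha_\star-1}$, then use the smoothing of $S^{(\varepsilon)}$ and integrate. The only minor imprecision is that Lemma~\ref{lem:S_eps} actually yields $\Vert S^{(\varepsilon)}_{t_\varepsilon-s}\Vert_{\mathcal{C}^0\to\mathcal{C}^\gamma}\lesssim (t_\varepsilon-s)^{-\gamma/2-\kappa}$ with an arbitrarily small $\kappa>0$ loss (this is exactly how the paper writes it); since $\gamma/2+\kappa<1$ this changes nothing in the argument.
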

\begin{proof}
We use Lemma \ref{lem:S_eps} to estimate the approximate heat semigroup, and Lemma \ref{lem:X_gamma_eps}:
\begin{equs}
\Vert \mathbf{\bar H}_{\varepsilon}^{v_{\varepsilon}}&(t_\varepsilon) \Vert_{\mathcal{C}^{\gamma}} \\
&\lesssim \sum_{\substack{w \in \mathcal{A}_{p-1} \\ |w| \geq 2}} \int_{0}^{t_\varepsilon} (t_\varepsilon-s)^{-\frac{\gamma}{2} -\kappa} \Vert D^{w} G(u_{\varepsilon}(s)) \Vert_{\mathcal{C}^{0}}  \Vert \angles{D_\varepsilon \mathbf{X}_{\varepsilon}(s;\cdot)}{e_w \otimes e_1} \Vert_{\mathcal{C}^{0}} \,ds\\
&\lesssim \sum_{\substack{w \in \mathcal{A}_{p-1} \\ |w| \geq 2}} t_\varepsilon^{1-\frac{\gamma}{2} -\kappa} \varepsilon^{(|w|+1)\alpha_{\star}-1} \lesssim \varepsilon^{3\alpha_{\star}-1}\;,
\end{equs}
for $\kappa > 0$ small enough. This is the claimed bound.
\end{proof}


\section{Convergence of the correction term}
\label{sec:correction}

In this section we show that the term $\mathbf{H}_{\varepsilon}^{v_{\varepsilon}}$, defined in \eqref{eq:Upsilon_eps}, converges to the correction term $\mathbf{H}^{\bar{v}}$ from \eqref{eq:u_mild_terms}. In view of Remark~\ref{rem:stop_time}, we only consider time intervals up to the stopping time $\varrho_{K,\varepsilon}$, by using the notation \eqref{e:StoppedTime}.

To shorten the notation we define $\mathbb{X}_\varepsilon(t)$ to be the projection of the rough path $\mathbf{X}_\varepsilon(t)$ to the second level of the tensor algebra. The following lemma is similar to \cite[Proposition 4.1]{HMW14}, but the bound is in a H\"{o}lder norm rather than a Sobolev norm.
\begin{lemma}
\label{lem:DX_convergence}
For any $\gamma \in \bigl(0, \frac{1}{2}\bigr)$, any $t > 0$ and any $\kappa > 0$ small enough we have
\begin{equ}
\mathbb{E} \Big[ \sup_{s \in [0,t]} \left\Vert D_\varepsilon \mathbb{X}_{\varepsilon}(s, \cdot) - \Lambda \mathrm{Id} \right\Vert_{\mathcal{C}^{-\gamma}} \Big] \lesssim \varepsilon^{\gamma - \kappa}\;.
\end{equ}
\end{lemma}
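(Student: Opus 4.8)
The plan is to split $D_\varepsilon\mathbb{X}_\varepsilon(s,\cdot) - \Lambda\,\mathrm{Id}$ into its mean and a purely second-chaos fluctuation and to bound the two separately. Recall from \eqref{eq:D_eps_X_def} that $D_\varepsilon\mathbb{X}_\varepsilon(s,x)_{ij} = \tfrac1\varepsilon\int_\real \mathbb{X}_\varepsilon(s;x,x+\varepsilon z)_{ij}\,\mu(dz)$ with $\mathbb{X}_\varepsilon(s;x,y)_{ij} = \langle\mathbf{X}_\varepsilon(s;x,y),e_i\otimes e_j\rangle$. Since $\mathbf{X}_\varepsilon$ is the canonical geometric lift of the Gaussian field $X_\varepsilon$, for each fixed $(s,x,y)$ the entry $\mathbb{X}_\varepsilon(s;x,y)_{ij}$ belongs to the sum of the zeroth and second homogeneous Wiener chaoses; writing $\mathbb{X}_\varepsilon(s;x,y)_{ij} = \mathbb{E}[\mathbb{X}_\varepsilon(s;x,y)_{ij}] + A_\varepsilon(s;x,y)_{ij}$, with $A_\varepsilon$ the (mean-zero) second-chaos part, one gets $D_\varepsilon\mathbb{X}_\varepsilon(s,\cdot) - \Lambda\,\mathrm{Id} = (\Lambda_\varepsilon - \Lambda)\,\mathrm{Id} + \Psi_\varepsilon(s,\cdot)$, where $\Lambda_\varepsilon\,\mathrm{Id} := \mathbb{E}[D_\varepsilon\mathbb{X}_\varepsilon(s,x)]$ (independent of $(s,x)$ by the space-time stationarity of $X_\varepsilon$) and $\Psi_\varepsilon(s,x)_{ij} := \tfrac1\varepsilon\int_\real A_\varepsilon(s;x,x+\varepsilon z)_{ij}\,\mu(dz)$.

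For the deterministic part, using the Fourier expansion \eqref{eq:X_eps_Fourier} and $\mathbb{E}[\eta_k^{(\varepsilon)}(s)\otimes\eta_k^{(\varepsilon)}(s)] = \mathrm{Id}$ one computes $\mathbb{E}[\mathbb{X}_\varepsilon(s;x,y)_{ij}] = \tfrac12\delta_{ij}\,v_\varepsilon(y-x)$, where $v_\varepsilon(h)$ is the (component-independent) variance of the spatial increment of $X_\varepsilon(s,\cdot)$ over a step $h$, the antisymmetric Lévy-area part having vanishing mean. Expanding $v_\varepsilon$ in Fourier modes and substituting $t = \varepsilon k$ exhibits $\Lambda_\varepsilon = \tfrac1{2\varepsilon}\int_\real v_\varepsilon(\varepsilon z)\,\mu(dz)$ as a Riemann-sum approximation of the integral defining $\Lambda$; the bounded variation of $h^2/m$ (Assumption~\ref{as:fourth}) bounds the discretisation error and the finite moments of $\mu$ (Assumption~\ref{as:third}) bound the tail $|z|\gtrsim\varepsilon^{-1}$, giving $|\Lambda_\varepsilon - \Lambda|\lesssim\varepsilon^{1-\kappa}$. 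Since $\gamma < \tfrac12 < 1 - \kappa$, this term is negligible against the claimed rate.

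The core of the proof is the bound $\mathbb{E}\sup_{s\in[0,t]}\|\Psi_\varepsilon(s,\cdot)\|_{\mathcal{C}^{-\gamma}} \lesssim \varepsilon^{\gamma-\kappa}$. I would derive it from the space-time Kolmogorov-type criterion for distribution-valued processes of Appendix~\ref{sec:distributions}, which reduces the claim to moment bounds $\mathbb{E}|\langle\Psi_\varepsilon(s,\cdot),\psi^\lambda_x\rangle|^q$, together with the corresponding spatial- and temporal-increment bounds, for a family of test functions $\psi^\lambda_x$ rescaled at scales $\lambda\in(0,1]$. As $\Psi_\varepsilon(s,\cdot)$ lives in the fixed second Wiener chaos, Gaussian hypercontractivity reduces all these to $q=2$, i.e.\ to explicit covariances: writing $\langle\Psi_\varepsilon(s,\cdot),\psi^\lambda_x\rangle = \tfrac1\varepsilon\int_\real\langle A_\varepsilon(s;\cdot,\cdot+\varepsilon z),\psi^\lambda_x\rangle\,\mu(dz)$ and Wick-contracting the four Gaussian factors via \eqref{eq:X_eps_Fourier}, I expect the key estimate
\[
\mathbb{E}\big|\langle\Psi_\varepsilon(s,\cdot),\psi^\lambda_x\rangle\big|^2 \;\lesssim\; \varepsilon\,(\lambda\vee\varepsilon)^{-1} \;\lesssim\; \varepsilon^{2\gamma}\lambda^{-2\gamma}\qquad(\lambda\gtrsim\varepsilon)\;,
\]
which reflects that $\varepsilon^{-1/2}\Psi_\varepsilon$ behaves, as $\varepsilon\downarrow 0$, like a white noise cut off at frequency $\varepsilon^{-1}$; for $\lambda\lesssim\varepsilon$ the field $\Psi_\varepsilon(s,\cdot)$ is still a smooth function of $x$ and contributes even less. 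The analogous temporal estimate, obtained from the same chaos computation using the decorrelation of the Ornstein--Uhlenbeck modes $\eta_k^{(\varepsilon)}$, gives $\mathbb{E}\|\Psi_\varepsilon(t,\cdot)-\Psi_\varepsilon(s,\cdot)\|^q_{\mathcal{C}^{-\gamma}} \lesssim |t-s|^{\theta q}$ for some $\theta>0$; interpolating this with the uniform-in-$s$ bound and feeding the result to the criterion (with $q$ large and the interpolation weight small, exactly as in the proof of Lemma~\ref{lem:X_X_eps}) yields the stated rate.

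The main obstacle is this covariance estimate: one must carry out the Wick expansion of the $\mu$-convolved iterated integral and sum the resulting double Fourier series uniformly in the scale $\lambda$ and in the shift $z$, using Assumptions~\ref{as:third}--\ref{as:fourth} (finite moments of $\mu$; boundedness and bounded variation of $h$ and $h^2/m$) to produce the sharp $\varepsilon(\lambda\vee\varepsilon)^{-1}$ scaling. It is precisely the threshold at $\lambda\sim\varepsilon$, and the matching of the continuum integral defining $\Lambda$ with its discretisation, that force the restriction $\gamma<\tfrac12$ and the $\kappa$-loss. This is the negative-Hölder analogue of \cite[Proposition~4.1]{HMW14}, where the same estimate was obtained in a Sobolev norm.
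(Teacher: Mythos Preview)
Your approach is essentially the same as the paper's: the paper's proof simply says ``follow \cite[Proposition~4.1]{HMW14}, but use Lemma~\ref{lem:Kolmogorov} to reduce to moment bounds on the Paley--Littlewood blocks of $D_\varepsilon\mathbb{X}_\varepsilon$ instead of pointwise bounds'', and your outline unpacks exactly this---split into the deterministic part $\Lambda_\varepsilon-\Lambda$ plus a second-chaos fluctuation, then feed second-moment bounds into the Kolmogorov-type criterion of Appendix~\ref{sec:distributions}, using hypercontractivity to reduce all moments to $q=2$.

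One small mismatch worth flagging: you describe the criterion of Appendix~\ref{sec:distributions} as operating on pairings $\langle\Psi_\varepsilon(s,\cdot),\psi^\lambda_x\rangle$ against rescaled test functions, but Lemma~\ref{lem:Kolmogorov} is actually stated in terms of pointwise bounds on the Paley--Littlewood blocks $\delta_n\Psi_\varepsilon(s,x)$, and in particular does \emph{not} require any spatial-increment bounds. These two formulations are of course equivalent (identify $\lambda\sim 2^{-n}$), and your key estimate $\mathbb{E}|\langle\Psi_\varepsilon(s,\cdot),\psi^\lambda_x\rangle|^2\lesssim\varepsilon(\lambda\vee\varepsilon)^{-1}$ translates directly into $\mathbb{E}|\delta_n\Psi_\varepsilon(s,x)|^2\lesssim\varepsilon(2^{-n}\vee\varepsilon)^{-1}\lesssim\varepsilon^{2\gamma}2^{2n\gamma}$, which is precisely the input Lemma~\ref{lem:Kolmogorov} needs with $\alpha$ slightly above $-\gamma$. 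So nothing is wrong, but to match the paper you should phrase the computation in Littlewood--Paley language.
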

\begin{proof}
The proof is almost identical to that of \cite[Proposition 4.1]{HMW14}, but we use Lemma~\ref{lem:Kolmogorov} to reduce oneself to moment bounds on the Paley-Littlewood blocks
of $D_\varepsilon \mathbb{X}_{\varepsilon}$, instead of using pointwise bounds.
\end{proof}

A bound on $\mathbf{H}^{\bar{v}}$ and $\mathbf{H}^{v_{\varepsilon}}_\varepsilon$, defined in \eqref{eq:u_mild_terms} and \eqref{eq:Upsilon_eps} respectively, is given in the next proposition. 
\begin{proposition}
\label{prop:U_U_eps}
For any $\gamma \in (0, 1]$ and any $\kappa > 0$ sufficiently small we have
\begin{equs}
\mathbb{E} \Vert \mathbf{H}^{\bar{v}} (t_\varepsilon)- \mathbf{H}^{v_{\varepsilon}}_\varepsilon(t_\varepsilon) \Vert_{\mathcal{C}^{\gamma}} \lesssim T^{1 - \frac{\gamma}{2}} \mathbb{E} \Vert \bar{v} - v_\varepsilon \Vert_{\mathcal{C}^{0}_{t_\varepsilon}} + \mathbb{E} \Vert X - X_\varepsilon \Vert_{\mathcal{C}^{0}_{t_\varepsilon}} \\
+ \mathbb{E} \Vert u^0 - u_{\varepsilon}^0 \Vert_{\mathcal{C}^{0}} + \varepsilon^{\alpha_\star - \kappa}\;,
\end{equs}
where $T > 0$ is as in the definition of the stopping times above \eqref{e:StoppedTime} and the constant $\alpha_\star$ is defined in the beginning of Section~\ref{ss:APriori}.
\end{proposition}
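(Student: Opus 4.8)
The plan is to decompose the difference $\mathbf{H}^{\bar v}(t_\varepsilon)-\mathbf{H}^{v_\varepsilon}_\varepsilon(t_\varepsilon)$ into terms that can each be controlled either by Lemma~\ref{lem:DX_convergence}, by the a priori bounds valid up to $\varrho_{K,\varepsilon}$, or by the smoothing properties of $S_{t}$ and $S^{(\varepsilon)}_t$ (Lemmas~\ref{lem:S_reg}, \ref{lem:S_S_eps}, \ref{lem:S_eps} in the appendix). Recalling that $\mathbf{H}^{\bar v}(t)_i=\Lambda\int_0^t S_{t-s}\,\mathrm{div}\,G_i(\bar u(s))\,ds=\Lambda\sum_{k\in\mathcal A}\int_0^t S_{t-s}\,D^kG_{ik}(\bar u(s))\,ds$ and $\mathbf{H}^{v_\varepsilon}_\varepsilon(t)_i=\sum_{k}\int_0^t S^{(\varepsilon)}_{t-s}\bigl(D^kG_{ik}(u_\varepsilon(s))\angles{D_\varepsilon\mathbf{X}_\varepsilon(s)}{e_k\otimes e_k}\bigr)\,ds$ (omitting the sum over the second index as in the text), I would write the difference, for each fixed pair $i,k$, as a sum of four pieces:
\begin{equs}
\mathbf{H}^{\bar v}(t_\varepsilon)_i-\mathbf{H}^{v_\varepsilon}_\varepsilon(t_\varepsilon)_i
&=\int_0^{t_\varepsilon}S_{t_\varepsilon-s}\,D^kG_{ik}(\bar u(s))\,\bigl(\Lambda\,\mathrm{Id}-D_\varepsilon\mathbb{X}_\varepsilon(s)\bigr)_{kk}\,ds\\
&\quad+\int_0^{t_\varepsilon}S_{t_\varepsilon-s}\bigl(D^kG_{ik}(\bar u(s))-D^kG_{ik}(u_\varepsilon(s))\bigr)\angles{D_\varepsilon\mathbf{X}_\varepsilon(s)}{e_k\otimes e_k}\,ds\\
&\quad+\int_0^{t_\varepsilon}\bigl(S_{t_\varepsilon-s}-S^{(\varepsilon)}_{t_\varepsilon-s}\bigr)D^kG_{ik}(u_\varepsilon(s))\angles{D_\varepsilon\mathbf{X}_\varepsilon(s)}{e_k\otimes e_k}\,ds\;,
\end{equs}
where I have used $\mathcal H^{i,j}_\varepsilon=\Lambda\delta_{ij}-\angles{D_\varepsilon\mathbf{X}_\varepsilon}{e_i\otimes e_j}$ and summed the three groups to recover the full difference (the first integral is where the correction constant $\Lambda$ is born).

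For the first integral I would pair $D^kG_{ik}(\bar u(s))$, which is bounded in $\mathcal{C}^0$ (indeed in $\mathcal{C}^{\alpha_\star}$) up to $\varrho_{K,\varepsilon}$ since $G\in\mathcal{C}^\infty$ and $\|\bar u\|_{\mathcal{C}^0_{t_\varepsilon}}\lesssim K$, against $D_\varepsilon\mathbb{X}_\varepsilon(s)-\Lambda\,\mathrm{Id}$ measured in $\mathcal{C}^{-\gamma'}$ for a small $\gamma'>0$; the duality pairing $\mathcal{C}^{\gamma'}\times\mathcal{C}^{-\gamma'}\to\real$ together with $\|S_{t_\varepsilon-s}\|_{\mathcal{C}^{-\gamma'}\to\mathcal{C}^{\gamma}}\lesssim (t_\varepsilon-s)^{-(\gamma+\gamma')/2}$ gives an integrable singularity, and then Lemma~\ref{lem:DX_convergence} yields a bound of order $\varepsilon^{\gamma'-\kappa}\lesssim\varepsilon^{\alpha_\star-\kappa}$ after taking expectations (choosing $\gamma'$ close to $\alpha_\star$). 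For the second integral I use local Lipschitz continuity of $D^kG_{ik}$ to bound the integrand by $\|\bar u(s)-u_\varepsilon(s)\|_{\mathcal{C}^0}\lesssim \|\bar v-v_\varepsilon\|_{\mathcal{C}^0_{t_\varepsilon}}+\|X-X_\varepsilon\|_{\mathcal{C}^0_{t_\varepsilon}}+\|U(s)-U_\varepsilon(s)\|_{\mathcal{C}^0}$, controlling the last term by \eqref{eq:U_U_eps_0}, while $\|\angles{D_\varepsilon\mathbf{X}_\varepsilon(s)}{e_k\otimes e_k}\|_{\mathcal{C}^0}\lesssim \varepsilon^{2\alpha_\star-1}$ by Lemma~\ref{lem:X_gamma_eps} — but this blows up, so instead I would rather bound $D_\varepsilon\mathbb{X}_\varepsilon(s)$ in $\mathcal{C}^{-\gamma'}$ uniformly (which follows since it is $\Lambda\,\mathrm{Id}$ plus something of size $\varepsilon^{\gamma'-\kappa}$ by Lemma~\ref{lem:DX_convergence}, hence $O(1)$), so that the pairing is again against $\mathcal{C}^{\gamma'}$-functions and $\|S_{t_\varepsilon-s}\|_{\mathcal{C}^{-\gamma'}\to\mathcal{C}^\gamma}$ is integrable, producing the $T^{1-\gamma/2}$ prefactor times the $\mathcal{C}^0$-differences plus $\varepsilon^{\alpha_\star-\kappa}$. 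The third integral is handled by Lemma~\ref{lem:S_S_eps}: $\|S_{t_\varepsilon-s}-S^{(\varepsilon)}_{t_\varepsilon-s}\|_{\mathcal{C}^{-\gamma'}\to\mathcal{C}^\gamma}\lesssim \varepsilon^{\alpha_\star-\kappa}(t_\varepsilon-s)^{-(\gamma+\gamma')/2-\kappa}$, again giving an integrable singularity and a factor $\varepsilon^{\alpha_\star-\kappa}$ after using the uniform $\mathcal{C}^{-\gamma'}$-bound on $D_\varepsilon\mathbb{X}_\varepsilon$ and $\|G(u_\varepsilon(s))\|_{\mathcal{C}^{\gamma'}}\lesssim K$.

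Summing the three estimates and taking expectations gives exactly the claimed bound. The main obstacle, and the reason the argument is more delicate than the analogous reaction-term estimates of Section~\ref{sec:reaction}, is that $\angles{D_\varepsilon\mathbf{X}_\varepsilon(s)}{e_k\otimes e_k}$ does \emph{not} converge to anything small in $\mathcal{C}^0$ — it converges to $\Lambda\,\mathrm{Id}$, and the only convergence one has (Lemma~\ref{lem:DX_convergence}) is in the negative Hölder space $\mathcal{C}^{-\gamma}$. Consequently every term must be set up as a duality pairing between a $\mathcal{C}^{\gamma'}$-regular factor built from $G$ and the solution (using that $G\in\mathcal{C}^\infty$ and all the relevant norms of $\bar u,u_\varepsilon$ are bounded by $K+1$ on $[0,t_\varepsilon]$) and the distribution $D_\varepsilon\mathbb{X}_\varepsilon(s)$, and one must check carefully that the resulting time singularities $(t_\varepsilon-s)^{-(\gamma+\gamma')/2-\kappa}$ remain integrable, which forces $\gamma<1$ and $\gamma',\kappa$ small; keeping track of where the correction constant $\Lambda$ is produced (the first integral) and verifying it matches the definition of $\Lambda$ is the bookkeeping heart of the proof, but this matching is already implicit in the definition of $\mathcal H^{i,j}_\varepsilon$ and in Lemma~\ref{lem:DX_convergence}.
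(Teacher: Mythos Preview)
Your approach is essentially the paper's three-term decomposition, using Lemma~\ref{lem:DX_convergence} for the negative-regularity convergence of $D_\varepsilon\mathbb{X}_\varepsilon$, Lemma~\ref{lem:multipl} for the product, and the semigroup smoothing lemmas. The one subtlety is the \emph{order} in which you peel off the differences. In your second integral you pair $D^kG_{ij}(\bar u)-D^kG_{ij}(u_\varepsilon)$ against $D_\varepsilon\mathbb{X}_\varepsilon\in\mathcal{C}^{-\gamma'}$; by Lemma~\ref{lem:multipl} this forces you to measure the first factor in $\mathcal{C}^{\gamma''}$ for some $\gamma''>\gamma'$, and the composition estimate then only yields control by $\|\bar u-u_\varepsilon\|_{\mathcal{C}^{\gamma''}}$, not by $\|\bar u-u_\varepsilon\|_{\mathcal{C}^0}$ as the proposition claims. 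So ``producing the $\mathcal{C}^0$-differences'' in your second step does not quite work as written.

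The paper avoids this by reversing the order: it first swaps $\bar u\to u_\varepsilon$ while the other factor is still the \emph{constant} $\Lambda$ (its term $J_2$), which is a clean $\mathcal{C}^0$ Lipschitz estimate, and only afterwards passes from $\Lambda$ to $D_\varepsilon\mathbb{X}_\varepsilon$ with $u_\varepsilon$ fixed (its $J_1$), where the full $\mathcal{C}^{\alpha_\star}$-regularity of $DG(u_\varepsilon)$ is available for the product. Your decomposition is easily repaired either by this reordering, or by splitting $D_\varepsilon\mathbb{X}_\varepsilon=\Lambda\,\mathrm{Id}+(D_\varepsilon\mathbb{X}_\varepsilon-\Lambda\,\mathrm{Id})$ inside your second integral and absorbing the latter piece into the $\varepsilon^{\alpha_\star-\kappa}$ error via the a~priori bound $\|\bar v-v_\varepsilon\|_{\mathcal{C}^\alpha_{t_\varepsilon}}\le 1$ built into the stopping time.
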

\begin{proof}
Let us define the functions $\mathcal{F}(u)_i = \Lambda ~\mathrm{div} G_i(u)$ and
\begin{equs}
\mathcal{F}_\varepsilon(u)_i(s,x) &= \sum_{w \in \mathcal{A}} D^{w} G_{ij}(u(s,x)) \angles{D_\varepsilon \mathbf{X}_{\varepsilon}(s,x)}{e_w \otimes e_j}\;,
\end{equs}
where as usual the sum over $j$ is omitted. Then we can write
\begin{equs}
\mathbf{H}^{\bar{v}}&(t_\varepsilon) - \mathbf{H}^{v_\varepsilon}_\varepsilon(t_\varepsilon) = \int_0^{t_\varepsilon} S_{t_\varepsilon-s} \Big(\mathcal{F}(u_\varepsilon) - \mathcal{F}_\varepsilon(u_\varepsilon)\Big)(s) \,ds\\
&\quad + \int_0^{t_\varepsilon} S_{t_\varepsilon-s} \Big(\mathcal{F}(\bar{u}) - \mathcal{F}(u_\varepsilon)\Big)(s) \,ds + \int_0^{t_\varepsilon} \Big(S_{t_\varepsilon-s} - S^{(\varepsilon)}_{t_\varepsilon-s}\Big) \mathcal{F}_\varepsilon(u_\varepsilon)(s) \,ds \\
&=: J_1 + J_2 + J_3\;.
\end{equs}
To bound $J_1$ we note that we can rewrite
\begin{equ}
\left(\mathcal{F}(u_\varepsilon) - \mathcal{F}_\varepsilon(u_\varepsilon)\right)_i(s,x) = \sum_{w \in \mathcal{A}} D^{w} G_{ij}(u_\varepsilon(s,x)) \left(\Lambda \delta_{w, j} - \angles{D_\varepsilon \mathbf{X}_{\varepsilon}(s,x)}{e_w \otimes e_j} \right).
\end{equ}
Therefore, applying Lemma \ref{lem:S_reg} with $\eta \in (0, \alpha_\star)$ and Lemma \ref{lem:multipl}, we obtain
\begin{equs}
\Vert J_1 \Vert_{\mathcal{C}^{\gamma}} &\lesssim \int_0^{t_\varepsilon} \Vert S_{t_\varepsilon-s} \Vert_{\mathcal{C}^{-\eta} \rightarrow \mathcal{C}^{\gamma}} \Vert \left(\mathcal{F}(u_\varepsilon) - \mathcal{F}_\varepsilon(u_\varepsilon)\right)(s) \Vert_{\mathcal{C}^{-\eta}} \,ds\\
&\lesssim \sup_{s \in [0,t_\varepsilon]} \left\Vert D_\varepsilon \mathbb{X}_{\varepsilon}(s, \cdot) - \Lambda \mathrm{Id} \right\Vert_{\mathcal{C}^{-\eta}} \Vert DG(u_\varepsilon) \Vert_{\mathcal{C}^{\alpha_\star}_{t_\varepsilon}} \int_0^{t_\varepsilon} (t_\varepsilon-s)^{-\frac{\eta + \gamma}{2}} \,ds\;.
\end{equs}
That gives us, using the boundedness of $\Vert u_\varepsilon \Vert_{\mathcal{C}_{t_\varepsilon}^{\alpha_\star}}$ and Lemma \ref{lem:DX_convergence},
\begin{equ}[eq:U_U_eps_1]
\mathbb{E}\Vert J_1 \Vert_{\mathcal{C}^{\gamma}_{t_\varepsilon}} \lesssim T^{1-\frac{\eta + \gamma}{2}} \varepsilon^{\eta - \kappa}\;.
\end{equ}
A bound on $J_2$ follows from Lemma \ref{lem:S_reg} and regularity of $G$,
\begin{equs}[eq:U_U_eps_2]
\Vert J_2 \Vert_{\mathcal{C}^{\gamma}} &\lesssim \int_0^{t_\varepsilon} (t_\varepsilon-s)^{- \frac{\gamma}{2}} \Vert \mathcal{F}(\bar{u}(s)) - \mathcal{F}(u_\varepsilon(s)) \Vert_{\mathcal{C}^{0}} \,ds \\
&\lesssim \int_0^{t_\varepsilon} (t_\varepsilon-s)^{- \frac{\gamma}{2}} \Vert \bar{u}(s) - u_\varepsilon(s) \Vert_{\mathcal{C}^{0}} \,ds \\
&\lesssim t_\varepsilon^{1 - \frac{\gamma}{2}} \Vert \bar{v} - v_\varepsilon \Vert_{\mathcal{C}^{0}_{t_\varepsilon}} + \Vert X - X_\varepsilon \Vert_{\mathcal{C}^{0}_{t_\varepsilon}} + \Vert u^0 - u_{\varepsilon}^0 \Vert_{\mathcal{C}^{0}} + \varepsilon^{\alpha_\star - \kappa}\;.
\end{equs}
Here, we have used the representation of $\bar{u}$ via $\bar{v}$ and the bound \eqref{eq:U_U_eps_0}.

For the third term we use Lemma \ref{lem:S_S_eps} with $\lambda = \frac{1}{2} - \kappa$,
\begin{equs}[eq:U_U_eps_3]
\Vert J_3 \Vert_{\mathcal{C}^{\gamma}_{t_\varepsilon}} \lesssim \varepsilon^{\frac{1}{2} - \kappa} t_\varepsilon^{1-\frac{1}{2}\left(\gamma +\frac{1}{2}\right)} \Vert \mathcal{F}_\varepsilon(u_\varepsilon) \Vert_{\mathcal{C}^{0}_{t_\varepsilon}} \lesssim \varepsilon^{\frac{1}{2} - \kappa} t_\varepsilon^{\frac{3}{4} - \frac{\gamma}{2}}\;,
\end{equs}
where we have used boundedness of the second-order iterated integral $\mathbb{X}_\varepsilon$ and $\Vert u_\varepsilon \Vert_{\mathcal{C}_{t_\varepsilon}^\alpha}$. Combining the estimates \eqref{eq:U_U_eps_1}, \eqref{eq:U_U_eps_2} and \eqref{eq:U_U_eps_3} we obtain the claimed bound.
\end{proof}


\section{Estimates on rough terms}
\label{sec:rough_int}

In this section we obtain bounds on the terms involving rough integrals. As usual, we will use the notation \eqref{e:StoppedTime}, which in view of Remark \ref{rem:stop_time} means that all the quantities involved in the definition of $\varrho_{K,\varepsilon}$ are bounded. Furthermore, let us define the quantity
\begin{equs}[eq:D_eps]
\mathcal{D}_\varepsilon(t_\varepsilon) &:= \Vert X- X_\varepsilon \Vert_{\mathcal{C}^{0}_{t_\varepsilon}} + \VERT \mathbf{X} - \mathbf{X}_\varepsilon \VERT_{\alpha, t_\varepsilon} + \Vert \bar{v}-v_\varepsilon \Vert_{\mathcal{C}^{\alpha}_{t_\varepsilon}} \\
&\quad + \Vert \bar{v}-v_\varepsilon \Vert_{\mathcal{C}^{1}_{(1-\alpha)/2, t_\varepsilon}} + \Vert u^0 - u^0_\varepsilon \Vert_{\mathcal{C}^{\alpha}}\;,
\end{equs}
where the norm $\VERT \cdot \VERT_{\alpha, t_\varepsilon}$ was introduced in \eqref{eq:RPNorm_time}.

The next lemma provides bounds on the rough integrals $Z$ and $Z_\varepsilon$ defined in \eqref{eq:Z_def} and \eqref{eq:Z_eps_def} respectively.

\begin{lemma}
For $t>0$ we have the following results
\begin{equs}
\label{eq:Z_0}
\Vert Z(t_\varepsilon)\Vert_{\mathcal{C}^{\alpha_{\star}}} &\lesssim t_\varepsilon^{-\frac{\alpha_{\star}}{2}}\;,\\
\label{eq:Z_0_Z_eps}
Z(t_\varepsilon) - Z_{\varepsilon}(t_\varepsilon) &= T_1(t_\varepsilon) + T_2(t_\varepsilon)\;,
\end{equs}
where, for $\kappa > 0$ small enough, the bounds
\begin{equs}
\Vert T_1(t_\varepsilon)\Vert_{\mathcal{C}^{\alpha}} \lesssim t_\varepsilon^{\frac{\alpha-1}{2}} \left(\mathcal{D}_{\varepsilon}(t_\varepsilon) + \varepsilon^{\alpha_\star - \alpha - \kappa}\right), \qquad \Vert T_2(t_\varepsilon)\Vert_{\mathcal{C}^{\alpha_\star}} \lesssim \varepsilon^{3\alpha_{\star}-1} t_\varepsilon^{-\frac{\alpha_\star}{2}}\;,
\end{equs}
hold with $\mathcal{D}_{\varepsilon}$ defined in \eqref{eq:D_eps}.
\end{lemma}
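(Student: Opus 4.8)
The plan is to establish the two statements \eqref{eq:Z_0} and \eqref{eq:Z_0_Z_eps} separately, using the bounds on rough integrals from Proposition~\ref{prop:RP_integrals} together with the estimates on the lifted processes from Lemma~\ref{lem:X_X_eps} and Lemma~\ref{lem:X_gamma_eps}. The starting observation is that since $\bar u(t) = \bar v(t) + X(t) + U(t)$ with $\bar v(t), U(t) \in \mathcal{C}^1$ (with the appropriate $t$-dependent blow-up captured by the stopping time $\varrho_{K,\varepsilon}$), the process $G(\bar u(t))$ is controlled by $\mathbf{X}(t)$, with controlled-path norm $\Vert G(\bar u(t)) \Vert_{\mathcal{C}^\alpha_{\mathbf{X}(t)}}$ bounded in terms of $\Vert X(t)\Vert_{\mathcal{C}^{\alpha_\star}}$, $\Vert \bar v(t)\Vert_{\mathcal{C}^1}$, $\Vert U(t)\Vert_{\mathcal{C}^1}$ and the $\mathcal{C}^{p-1}$-norm of $G$ on the relevant ball; the Gubinelli derivatives $\langle G(\bar u(t)), e_w\rangle$ are the symmetrised products $C_w D^w G(\bar u(t))$ appearing in \eqref{eq:G_approx_new}. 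Since all quantities in the definition of $\varrho_{K,\varepsilon}$ are bounded by $K+1$ on $(0,t_\varepsilon]$, this controlled-path norm is $\lesssim t_\varepsilon^{-\alpha_\star/2}$ (the blow-up coming only from $\Vert U(t)\Vert_{\mathcal{C}^1}$), and \eqref{eq:Z_0} follows directly from the bound \eqref{eq:RI_bound} applied to $Z(t_\varepsilon, \cdot) = \dashint_{-\pi}^\cdot G(\bar u(t_\varepsilon, y))\, d_y X(t_\varepsilon, y)$, together with $\VERT \mathbf{X}(t_\varepsilon)\VERT_{\alpha_\star} \leq K$.

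For the decomposition \eqref{eq:Z_0_Z_eps}, the idea is to split $Z - Z_\varepsilon$ according to the splitting of $Z_\varepsilon$ in \eqref{eq:Z_eps_def} into the sum over $w \in \mathcal{A}_{p-1}$: the term $T_2$ collects the contributions of words with $|w| \geq 2$, i.e.\ a structure already met in $\mathbf{\bar H}^{v_\varepsilon}_\varepsilon$, while $T_1$ collects the genuine ``rough-integral'' part coming from $w = \emptyset$ (i.e.\ $\langle D_\varepsilon \mathbf{X}_\varepsilon, e_j\rangle = D_\varepsilon X_\varepsilon^j$) together with the difference of the word-$|w|=1$ terms against $Z$ itself. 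For $T_2$ one argues exactly as in the proof of Proposition~\ref{prop:U_bar}, but without the time integration: each summand is $\int_{-\pi}^x D^w G_{ij}(u_\varepsilon(t_\varepsilon,y)) \langle D_\varepsilon \mathbf{X}_\varepsilon(t_\varepsilon;y), e_{w}\otimes e_j\rangle\, dy$, and using Lemma~\ref{lem:X_gamma_eps} the $\mathcal{C}^0$-norm of the integrand is $\lesssim \varepsilon^{(|w|+1)\alpha_\star - 1} \leq \varepsilon^{3\alpha_\star - 1}$ (the worst case $|w|=2$), while the $\mathcal{C}^{\alpha_\star}$-seminorm of the primitive is controlled the same way; the extra $t_\varepsilon^{-\alpha_\star/2}$ is harmless since $t_\varepsilon \leq T$ is bounded, though in fact one can keep it to match the statement. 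The boundedness of $\Vert D^w G(u_\varepsilon)\Vert_{\mathcal{C}^0_{t_\varepsilon}}$ here follows from $\Vert u_\varepsilon\Vert_{\mathcal{C}^0_{t_\varepsilon}} \leq K+1$ and smoothness of $G$.

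The main work, and the main obstacle, is the bound on $T_1$. Here I would write $T_1(t_\varepsilon) = \bigl(Z(t_\varepsilon) - \tilde Z_\varepsilon(t_\varepsilon)\bigr)$, where $\tilde Z_\varepsilon$ is the rough integral $\dashint_{-\pi}^\cdot G(u_\varepsilon(t_\varepsilon,y))\, d_y X_\varepsilon(t_\varepsilon, y)$ built from the lifted path $\mathbf{X}_\varepsilon(t_\varepsilon)$ (so that $G(u_\varepsilon(t_\varepsilon))$ is controlled by $\mathbf{X}_\varepsilon(t_\varepsilon)$), plus the error between this genuine rough integral and the Riemann-type expression actually appearing in $Z_\varepsilon$ with $D_\varepsilon \mathbf{X}_\varepsilon$ in place of $\mathbf{X}_\varepsilon$. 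The first piece is estimated by the stability bound \eqref{eq:RI_two_bound} of Proposition~\ref{prop:RP_integrals}: it produces $\VERT \mathbf{X}(t_\varepsilon) - \mathbf{X}_\varepsilon(t_\varepsilon)\VERT_\alpha$ times controlled-path norms, plus $(\VERT\mathbf{X}\VERT_\alpha + \VERT\mathbf{X}_\varepsilon\VERT_\alpha)$ times $\Vert G(\bar u(t_\varepsilon)), G(u_\varepsilon(t_\varepsilon))\Vert_{\mathcal{C}^\alpha_{\mathbf{X}(t_\varepsilon),\mathbf{X}_\varepsilon(t_\varepsilon)}}$; the latter unravels, via the mean value theorem applied to the $D^w G$'s and to the Gubinelli remainders, into $\Vert \bar u - u_\varepsilon\Vert_{\mathcal{C}^\alpha}$-type quantities, i.e.\ into $\Vert \bar v - v_\varepsilon\Vert_{\mathcal{C}^\alpha_{t_\varepsilon}}$, $\Vert X - X_\varepsilon\Vert_{\mathcal{C}^\alpha_{t_\varepsilon}}$, $\Vert U - U_\varepsilon\Vert$ (which brings in $\Vert u^0 - u^0_\varepsilon\Vert_{\mathcal{C}^\alpha}$ and an $\varepsilon^{\alpha_\star - \kappa}$ via \eqref{eq:initial_eps}), plus differences of remainders which are controlled again by $\VERT \mathbf{X} - \mathbf{X}_\varepsilon\VERT_\alpha$; all of this, together with the $t_\varepsilon^{(\alpha-1)/2}$ blow-up from $\Vert U(t_\varepsilon)\Vert_{\mathcal{C}^1}$, assembles into $t_\varepsilon^{(\alpha-1)/2}\mathcal{D}_\varepsilon(t_\varepsilon)$. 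The second piece --- replacing $\mathbf{X}_\varepsilon$ by its $D_\varepsilon$-averaged version --- is where Assumption~\ref{as:third} and the estimate \eqref{eq:D_approx}-type argument enter: one expands $\langle D_\varepsilon \mathbf{X}_\varepsilon(t_\varepsilon;y), e_j\rangle = \varepsilon^{-1}\int \langle \mathbf{X}_\varepsilon(t_\varepsilon; y,y+\varepsilon z), e_j\rangle\mu(dz)$ and compares the resulting sum against the rough-integral Riemann sums, the discrepancy being of size $\varepsilon^{\alpha_\star - \alpha - \kappa}$ (uniformly over the partition) because the averaging acts over a window of size $\varepsilon$ on a path of regularity $\alpha_\star$, losing $\alpha$ from the $\mathcal{C}^\alpha$-norm being measured and picking up the moments of $\mu$ from Assumption~\ref{as:third}. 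The delicate point is to organise this last comparison so that the constant is uniform in $\varepsilon$ and the partition mesh, which is exactly the reason Assumption~\ref{as:third} requires all moments of $\mu$ to be finite; once that is in hand, collecting the two pieces gives the stated bound on $T_1$.
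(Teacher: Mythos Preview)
Your treatment of \eqref{eq:Z_0} is essentially the paper's: view $G(\bar u(t))$ as controlled by the $\alpha_\star$-rough path $\mathbf X(t)$ (with derivative $DG(\bar u)$ and a remainder picking up the $t^{-\alpha_\star/2}$ from $\|U(t)\|_{\mathcal C^1}$) and apply \eqref{eq:RI_bound}. Fine.

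The problem is your decomposition for \eqref{eq:Z_0_Z_eps}. You put only the $|w|\ge 2$ words of $Z_\varepsilon$ into $T_2$, and push the remaining discretisation error, namely $\tilde Z_\varepsilon - [\text{$|w|\le 1$ part of }Z_\varepsilon]$, into $T_1$, claiming it is of size $\varepsilon^{\alpha_\star-\alpha-\kappa}$ by an ``\eqref{eq:D_approx}-type argument''. That analogy is wrong: \eqref{eq:D_approx} needs $\varphi\in\mathcal C^{1+\alpha_\star}$, whereas here $X_\varepsilon$ is only $\mathcal C^{\alpha_\star}$. The difference between the genuine rough integral $\tilde Z_\varepsilon$ and the averaged second-order Riemann-type sums (the $|w|\le 1$ part of $Z_\varepsilon$) is governed by the rough-path remainder $Q^\varepsilon\in\mathcal B^{3\alpha_\star}$, and after the $\varepsilon^{-1}$ averaging this yields $\varepsilon^{3\alpha_\star-1}$, \emph{not} $\varepsilon^{\alpha_\star-\alpha-\kappa}$. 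Since $3\alpha_\star-1<\alpha_\star-\alpha-\kappa$ whenever $\kappa<\alpha$ (and $\kappa<\alpha$ is exactly what is needed when the lemma is fed into Proposition~\ref{prop:Xi1} with $\gamma=1$), your $T_1$ does not satisfy the stated bound.

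What the paper does instead is set $T_1=I_1:=Z-\tilde Z_\varepsilon$ only, and put \emph{all} of $\tilde Z_\varepsilon-Z_\varepsilon$ into $T_2$. The point is that this latter quantity, despite containing the $|w|=0,1$ discretisation, actually lives in $\mathcal C^{\alpha_\star}$ (even $\mathcal C^1$ for the main piece). To see this the paper invokes a Fubini-type identity for rough integrals \cite[Lemma~2.10]{HW10} to rewrite $\tilde Z_\varepsilon - Z_\varepsilon$ as boundary terms $I_2,I_3$ (bounded by $\varepsilon^{\alpha_\star}$ in $\mathcal C^{\alpha_\star}$ via a rescaling argument), an averaged $\alpha_\star$-level remainder $I_4$ (the $Q^\varepsilon$ term, bounded by $\varepsilon^{3\alpha_\star-1}$ via \eqref{eq:RI_approx_bound}), and the $|w|\ge 2$ piece $I_5$ you identified. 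The key is that these errors are measured in $\mathcal C^{\alpha_\star}$, so in Proposition~\ref{prop:Xi1} the semigroup has to regularise less, compensating the weaker $\varepsilon$-power $3\alpha_\star-1$. Your argument is missing this Fubini step and the resulting regularity bookkeeping; the vague ``comparing Riemann sums'' does not produce it.
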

\begin{proof}
Since $\bar{u}(s) - X(s) \in \mathcal{C}^1$, for $s \leq t_\varepsilon$, the process $Y_{ij}(s) = G_{ij}(\bar{u}(s))$ is controlled by the $\alpha_\star$-regular rough path $\mathbf{X}(s)$ with the rough path derivative $Y'_{ij}(s) = DG_{ij}(\bar{u}(s))$ and the remainder
\begin{equs}
&R_{Y_{ij}}(s; x,y) = DG_{ij}(\bar{u}(s,x)) \left( \bar{v}(s; x, y) + U(s; x, y) \right) \\
 &+ \int_0^1 \left(DG_{ij}(\lambda \bar{u}(s,y) + (1-\lambda) \bar{u}(s,x)) - DG_{ij}(\bar{u}(s,x)) \right) \bar{u}(s; x, y)\, d\lambda\;,
\end{equs}
where we use the notation $\bar{v}(s; x, y) = \bar{v}(s, y) - \bar{v}(s, y)$ and respectively for $U$ and $\bar{u}$. Here, by the rough path derivative we mean the projection of the controlled rough path on $(\real^n)^*$ in Definition \ref{def:RP_control}, and the remainder is a collection of all the processes $R_Y^w$ from \eqref{eq:Y_expansion}.

From the regularity assumptions for the function $G$ and the processes $\bar{u}$ and $\bar{v}$, we obtain the bounds
\begin{equs}[eq:Y_bounds]
\Vert Y_{ij}(s) \Vert_{\mathcal{C}^{\alpha_\star}} \lesssim 1\;,\quad \Vert Y'_{ij}(s) \Vert_{\mathcal{C}^{\alpha_\star}} \lesssim 1\;,\quad \Vert R_{Y_{ij}}(s) \Vert_{\mathcal{B}^{2 \alpha_\star}} \lesssim s^{-\frac{\alpha_{\star}}{2}}\;.
\end{equs}
The power of $s$ in the last estimate comes from the bound $\Vert U(s) \Vert_{2\alpha_\star} \lesssim s^{-\frac{\alpha_{\star}}{2}}$, which is a consequence of Lemma \ref{lem:S_reg}. The estimate \eqref{eq:Z_0} follows from \eqref{eq:RI_bound} and \eqref{eq:Y_bounds}.

Similarly, for $s \leq t_\varepsilon$, the process $Y_{\varepsilon, ij}(s) = G_{ij}(u_\varepsilon(s))$ is controlled by the $\alpha_\star$-regular rough path $\mathbf{X}_\varepsilon(s)$ with the rough path derivative $Y'_{\varepsilon, ij}(s) = DG_{ij}(u_\varepsilon(s))$ and the remainder $R_{Y_{\varepsilon,ij}}(s)$, such that the following bounds hold
\begin{equs}[eq:Y_eps_bounds]
\Vert Y_{\varepsilon, ij}(s) \Vert_{\mathcal{C}^{\alpha_\star}} \lesssim 1\;,\quad \Vert Y'_{\varepsilon, ij}(s) \Vert_{\mathcal{C}^{\alpha_\star}} \lesssim 1\;,\quad \Vert R_{Y_{\varepsilon, ij}}(s) \Vert_{\mathcal{B}^{2 \alpha_\star}} \lesssim s^{-\frac{\alpha_{\star}}{2}}\;.
\end{equs}

To prove the bound \eqref{eq:Z_0_Z_eps}, we consider the processes $\bar{u}(s)$ and $u_\varepsilon(s)$ to be of H\"{o}lder regularity $\alpha$. Then they are controlled by the $\alpha$-regular rough paths $\mathbf{X}(s)$ and $\mathbf{X}_\varepsilon(s)$ respectively. Hence, we can extend $G_{ij}(\bar{u}(s))$ to the process $\mathcal{G}_{ij}(s): \mathbb{T} \rightarrow \big(T^{(p-1)}\big(\real^n\big)\big)^*$ which is controlled by $\mathbf{X}(s)$ as well and such that
\[
\angles{\mathcal{G}_{ij}(s,x)}{e_w} = D^{w} G_{ij}(\bar{u}(s,x))\;,
\]
for $w \in \mathcal{A}_{p-1}$. Then, as it was noticed in Subsection \ref{ssec:rough_int}, for every $w \in \mathcal{A}_{p-1}$ the following expansion holds
\begin{equs}
&\angles{\mathcal{G}_{ij}(s,y)}{e_w} - \angles{\mathcal{G}_{ij}(s,x)}{e_w} \\
=& \sum_{\bar{w} \in \mathcal{A}_{p-|w|-1} \setminus \emptyset} C_{\bar{w}} \angles{\mathcal{G}_{ij}(s,x)}{e_{\bar{w}} \otimes e_w} \angles{\mathbf{X}(s;x,y)}{e_{\bar{w}}} + R_{\mathcal{G}_{ij}}^w(s; x,y)\;.
\end{equs}
For any word $w \in \mathcal{A}_{p-1}$, the assumptions on $G$ and $\bar{u}$ imply $\Vert \angles{\mathcal{G}_{ij}(s)}{e_w} \Vert_{\mathcal{C}^{\alpha}} \lesssim 1$. Furthermore, from the argument of Subsection \ref{ssec:rough_int}, it is not difficult to obtain the estimate on the remainder: $\Vert R^w_{\mathcal{G}_{ij}}(s) \Vert_{\mathcal{B}^{(p - |w|) \alpha}} \lesssim s^{\frac{\alpha_{\star} - 1}{2}}$. The latter bound follows from $|\bar{u}(s; x,y)_{\bar{w}}| \lesssim |y-x|^{(p-|w|) \alpha}$, for any word $\bar{w}$ such that $|\bar{w}| = p-|w|$, and
\begin{equs}
| \bar{u}(s;x,y)_{\bar{w}} &- X(s;x,y)_{\bar{w}} | \lesssim \left| \bar{u}(s;x,y) - X(s;x,y) \right| \\
&\lesssim |y-x| \left( \Vert \bar{v}(s)\Vert_{\mathcal{C}^{1}} + \Vert U(s)\Vert_{\mathcal{C}^{1}}\right) \lesssim |y-x| \left( 1 + s^{\frac{\alpha_{\star} - 1}{2}}\right),
\end{equs}
for any word $\bar{w} \in \mathcal{A}_{p-|w|-1} \setminus \{\emptyset\}$. Here, in the last line we have used the bound
\[
\Vert U(s) \Vert_{\mathcal{C}^{1}} \lesssim s^{\frac{\alpha_\star - 1}{2}} \left(\Vert u^0 \Vert_{\mathcal{C}^{\alpha_\star}} + \Vert X(0) \Vert_{\mathcal{C}^{\alpha_\star}} \right),
\]
which follows from Lemma \ref{lem:S_reg}.

In the same way the process $G_{ij}(u_\varepsilon(s))$ can be extended to $\mathcal{G}^\varepsilon_{ij}(s): \mathbb{T} \rightarrow \big(T^{(p-1)}\big(\real^n\big)\big)^*$ which is controlled by $\mathbf{X}_\varepsilon(s)$. We denote the remainders by $R_{\mathcal{G}^\varepsilon_{ij}}^w$. Furthermore, the corresponding bounds hold
\[
\Vert \angles{\mathcal{G}^\varepsilon_{ij}(s)}{e_w} \Vert_{\mathcal{C}^{\alpha}} \lesssim 1\;,\qquad \Vert R^w_{\mathcal{G}_{ij}^\varepsilon}(s) \Vert_{\mathcal{B}^{(p - |w|) \alpha}} \lesssim s^{\frac{\alpha_{\star} - 1}{2}}\;,
\]
for any word $w \in \mathcal{A}_{p-1}$.

The following estimate follows from the regularity of the function $G$,
\begin{equs}
\Vert \angles{\mathcal{G}_{ij}(s) - \mathcal{G}^\varepsilon_{ij}(s)}{e_w} &\Vert_{\mathcal{C}^{\alpha}} \lesssim \Vert \bar{u}(s) - u_\varepsilon(s) \Vert_{\mathcal{C}^{\alpha}} \label{eq:G_G_eps}\\
\lesssim \Vert &X(s) - X_\varepsilon(s) \Vert_{\mathcal{C}^{\alpha}} + \Vert \bar{v}(s) - v_\varepsilon(s) \Vert_{\mathcal{C}^{\alpha}} + \Vert u^0 - u^0_\varepsilon \Vert_{\mathcal{C}^{\alpha}},
\end{equs}
where $w \in \mathcal{A}_{p-1}$. Furthermore, the following bound holds
\begin{equs}
|\bar{u}(s; x,y)_{\bar{w}} - u_\varepsilon(s; x,y)_{\bar{w}}| \lesssim |y-x|^{(p-|w|) \alpha} \Vert \bar{u}(s) - u_\varepsilon(s) \Vert_{\mathcal{C}^{\alpha}}\;,
\end{equs}
for a word $\bar{w}$ such that $|\bar{w}| = p-|w|$, and for any word $\bar{w} \in \mathcal{A}_{p-|w|-1} \setminus \{\emptyset\}$ one has
\begin{equs}
| \bar{u}(s;x,y)_{\bar{w}} &- X(s;x,y)_{\bar{w}} - u_\varepsilon(s;x,y)_{\bar{w}} + X_\varepsilon(s;x,y)_{\bar{w}} | \\
&\quad\lesssim \left| \bar{u}(s;x,y) - X(s;x,y) - u_\varepsilon(s;x,y) + X_\varepsilon(s;x,y) \right| \\
&\quad\lesssim |y-x| \left( \Vert \bar{v}(s) - v_\varepsilon(s)\Vert_{\mathcal{C}^{1}} + \Vert U(s) - U_\varepsilon(s)\Vert_{\mathcal{C}^{1}}\right)\\
&\quad\lesssim |y-x| s^{\frac{\alpha - 1}{2}} \left( \Vert \bar{v} - v_\varepsilon \Vert_{\mathcal{C}^{1}_{(1-\alpha)/2, s}} + \Vert X(s) - X_\varepsilon(s) \Vert_{\mathcal{C}^{\alpha}} \right.\\
&\qquad+\left. \Vert u^0 - u^0_\varepsilon \Vert_{\mathcal{C}^{\alpha}} + \varepsilon^{\alpha_\star - \alpha - \kappa}\right).
\end{equs}
Here, in the last line we have used the bound
\begin{equs}
\Vert U(s) - U_\varepsilon(s)\Vert_{\mathcal{C}^{1}} &\lesssim \Vert S_s (X(0) - X_\varepsilon(0) - u^0 + u^0_\varepsilon) \Vert_{\mathcal{C}^{1}} \\
&\qquad+ \Vert (S_s - S^{(\eps)}_s) (X_\varepsilon(0) - u^0_\varepsilon)\Vert_{\mathcal{C}^{1}}\\
&\lesssim s^{\frac{\alpha-1}{2}} \left( \Vert X(0) - X_\varepsilon(0) \Vert_{\mathcal{C}^{\alpha}} + \Vert u^0 - u^0_\varepsilon \Vert_{\mathcal{C}^{\alpha}} + \varepsilon^{\alpha_\star - \alpha - \kappa} \right),
\end{equs}
for any $\kappa > 0$ sufficiently small, which follows from Lemmas \ref{lem:S_reg} and \ref{lem:S_S_eps}. From these bounds and Subsection \ref{ssec:rough_int} we obtain
\begin{equs}[eq:R_R_eps]
\Vert R^w_{\mathcal{G}_{ij}}(s) - R^w_{\mathcal{G}^\varepsilon_{ij}}(s) \Vert_{\mathcal{B}^{(p - |w|) \alpha}} &\lesssim s^{\frac{\alpha-1}{2}} \left( \Vert \bar{v} - v_\varepsilon \Vert_{\mathcal{C}^{1}_{(1-\alpha)/2, s}} + \Vert X - X_\varepsilon \Vert_{\mathcal{C}^{\alpha}_s} \right.\\
&\quad +\left. \Vert u^0 - u^0_\varepsilon \Vert_{\mathcal{C}^{\alpha}} + \varepsilon^{\alpha_\star - \alpha - \kappa}\right).
\end{equs}

In order to prove \eqref{eq:Z_0_Z_eps}, we define
\begin{equs}
Q_i^{\varepsilon}(t_\varepsilon;x,y) &:= \dashint_{x}^{y} G_{ij}(u_{\varepsilon}(t_\varepsilon,z))\, d_z X^j_{\varepsilon}(t_\varepsilon,z) - G_{ij}(u_{\varepsilon}(t_\varepsilon,x)) X^j_{\varepsilon}(t_\varepsilon; x,y) \\
&\quad - \sum_{w \in \mathcal{A}} D^w G_{ij}(u_{\varepsilon}(t_\varepsilon,x)) \angles{\mathbf{X}_\varepsilon(t_\varepsilon;x,y)}{e_{w} \otimes e_j}\;,\\
T_i^{\varepsilon}(t_\varepsilon;x,y) &:= \sum_{\substack{w \in \mathcal{A}_{p-1} \\ |w| \geq 2}} C_w \angles{\mathcal{G}^\varepsilon_{ij}(t_\varepsilon,x)}{e_w} \angles{\mathbf{X}_\varepsilon(t_\varepsilon;x,y)}{e_w \otimes e_j}\;,
\end{equs}
where we have omitted as usual the sum over $j$. From \eqref{eq:RI_approx_bound}, \eqref{eq:Y_eps_bounds} and Definition \ref{def:RP} we obtain
\begin{equs}[eq:Q_eps_bound]
\Vert Q_i^{\varepsilon}(t_\varepsilon) \Vert_{\mathcal{B}^{3\alpha_\star}} \lesssim t_\varepsilon^{-\frac{\alpha_\star}{2}}\;, \qquad \Vert T_i^{\varepsilon}(t_\varepsilon) \Vert_{\mathcal{B}^{3\alpha_\star}} \lesssim 1\;.
\end{equs}
Next, we can rewrite $Z^i - Z^i_{\varepsilon}$ in the following way
\begin{equs}
Z^i(t_\varepsilon,x) &- Z^i_{\varepsilon}(t_\varepsilon,x) \\
&= \left( \dashint_{-\pi}^{x} G_{i}(u(t_\varepsilon,y)) \,d_{y} X(t_\varepsilon,y) - \dashint_{-\pi}^{x} G_{i}(u_{\varepsilon}(t_\varepsilon,y)) \,d_{y} X_{\varepsilon}(t_\varepsilon,y) \right)\\
&\quad+ \int_{\real} \dashint_{-\pi}^{-\pi + \varepsilon z} \frac{\varepsilon z - \pi - y}{\varepsilon} G_{i}(u_{\varepsilon}(t_\varepsilon,y))\, d_{y} X_{\varepsilon}(t_\varepsilon,y)\, \mu(dz) \\
&\quad+ \int_{\real} \dashint_{x}^{x + \varepsilon z} \frac{y - \varepsilon z - x}{\varepsilon} G_{i}(u_{\varepsilon}(t_\varepsilon,y)) \,d_{y} X_{\varepsilon}(t_\varepsilon,y) \,\mu(dz)\\
&\quad- \int_{\real} \int_{-\pi}^{x} \frac{Q_i^{\varepsilon}(t_\varepsilon;y,y+\varepsilon z)}{\varepsilon} \,dy \,\mu(dz)\\
&\quad+ \int_{\real} \int_{-\pi}^{x} \frac{T_i^{\varepsilon}(t_\varepsilon;y,y+\varepsilon z)}{\varepsilon} \,dy \,\mu(dz) =: \sum_{1 \leq j \leq 5} I_{j}(t_\varepsilon,x)\;.
\end{equs}
Here, we have used the Fubini-type result proved in \cite[Lemma 2.10]{HW10}.

To bound $I_1$ we apply \eqref{eq:RI_two_bound} and use the bounds \eqref{eq:G_G_eps}, \eqref{eq:R_R_eps},
\begin{equs}
\Vert I_{1}(t_\varepsilon) \Vert_{\mathcal{C}^\alpha} \lesssim t_\varepsilon^{\frac{\alpha-1}{2}}  \left( \mathcal{D}_{\varepsilon}(t_\varepsilon) + \varepsilon^{\alpha_\star - \alpha - \kappa} \right),
\end{equs}
where $\mathcal{D}_{\varepsilon}$ is defined in \eqref{eq:D_eps}. It follows from \eqref{eq:Q_eps_bound} that
\begin{equs}
\Vert I_{4}(t_\varepsilon) \Vert_{\mathcal{C}^1} \lesssim \int_{\real} |z|^{3\alpha_\star} \mu(dz) \,\varepsilon^{3\alpha_\star - 1} \Vert Q_i^{\varepsilon}(t_\varepsilon) \Vert_{\mathcal{B}^{3\alpha_\star}} \lesssim \varepsilon^{3\alpha_\star - 1} t_\varepsilon^{-\frac{\alpha_\star}{2}}\;.
\end{equs}
In the same way from the second bound in \eqref{eq:Q_eps_bound} we derive
\begin{equs}
\Vert I_{5}(t_\varepsilon) \Vert_{\mathcal{C}^1} \lesssim \int_{\real} |z|^{3\alpha_\star} \mu(dz)\, \varepsilon^{3\alpha_\star - 1} \Vert T_i^{\varepsilon}(t_\varepsilon) \Vert_{\mathcal{B}^{3\alpha_\star}} \lesssim \varepsilon^{3\alpha_\star - 1}\;.
\end{equs}

To bound the integral $I_3$ let us define $u_{x,z,\varepsilon}(t_\varepsilon,y) := u_\varepsilon(t_\varepsilon, \varepsilon y - \varepsilon z - x)$ and the rough path $\mathbf{X}_{x,z,\varepsilon}(t_\varepsilon;y, \bar{y}) := \mathbf{X}_{\varepsilon}(t_\varepsilon; \varepsilon y - \varepsilon z - x, \varepsilon \bar{y} - \varepsilon z - x)$. Then we can perform the change of variables $\bar{y} = (y - \varepsilon z - x)/\varepsilon$ in the integral $I_3$ and obtain
\[
I_3 = \int_{\real} \dashint_{-z}^{0} Y_{x,z,\varepsilon}(t_\varepsilon,\bar{y}) \,d_{\bar{y}} X_{x,z,\varepsilon}(t_\varepsilon, \bar{y})\, \mu(dz)\;,
\]
where $X_{x,z,\varepsilon}(t_\varepsilon, \bar{y}) - X_{x,z,\varepsilon}(t_\varepsilon, y)$ is the projection of $\mathbf{X}_{x,z,\varepsilon}(t_\varepsilon;y, \bar{y})$ onto $\real^n$ and
\[
Y_{x,z,\varepsilon}(t_\varepsilon,\bar{y}) := \bar{y} G_{i}(u_{x,z,\varepsilon}(t_\varepsilon,\bar{y}))\;.
\]
Taking into account the a priori bounds on $u_\varepsilon$, we obtain from \cite[Lemma 2.2]{Hai11a} that $Y_{x,z,\varepsilon}(t_\varepsilon)$ is controlled by $\mathbf{X}_{x,z,\varepsilon}(t_\varepsilon)$ with the rough path derivative 
\[
Y'_{x,z,\varepsilon}(t_\varepsilon,\bar{y}) := \bar{y} D G_{i}(u_{x,z,\varepsilon}(t_\varepsilon,\bar{y}))\,
\]
and the remainder $R_{Y_{x,z,\varepsilon}}(t_\varepsilon)$ such that
\[
\Vert Y_{x,z,\varepsilon}(t_\varepsilon) \Vert_{\mathcal{C}^{\alpha_\star}} \lesssim 1\;, \quad \Vert Y'_{x,z,\varepsilon}(t_\varepsilon) \Vert_{\mathcal{C}^{\alpha_\star}} \lesssim 1\;, \quad \Vert R_{Y_{x,z,\varepsilon}}(t_\varepsilon) \Vert_{\mathcal{B}^{2\alpha_\star}} \lesssim t_\varepsilon^{-\frac{\alpha_\star}{2}}\;.
\]
Hence, the following bound follows from Proposition \ref{prop:RP_integrals} and the simple estimate $\VERT \mathbf{X}_{x,z,\varepsilon}(t_\varepsilon) \VERT_{\alpha_\star} \leq \varepsilon^{\alpha_\star} \VERT \mathbf{X}_{\varepsilon}(t_\varepsilon) \VERT_{\alpha_\star}$:
\begin{equs}
\Vert I_3(t_\varepsilon) \Vert_{\mathcal{C}^{\alpha_\star}} \leq \int_{\real} &\left\Vert \dashint_{\cdot}^{0} Y_{x,z,\varepsilon}(t_\varepsilon,\bar{y}) \,d_{\bar{y}} X_{x,z,\varepsilon}(t_\varepsilon, \bar{y}) \right\Vert_{\mathcal{C}^{\alpha_\star}} \, |z|^{\alpha_\star}\, \mu(dz)\\
\lesssim \int_{\real} &\VERT \mathbf{X}_{x,z,\varepsilon}(t_\varepsilon) \VERT_{\alpha_\star} \left( \Vert Y_{x,z,\varepsilon}(t_\varepsilon) \Vert_{\mathcal{C}^{\alpha_\star}} + \Vert Y'_{x,z,\varepsilon}(t_\varepsilon) \Vert_{\mathcal{C}^{\alpha_\star}} \right.\\
&+ \left.\Vert R_{Y_{x,z,\varepsilon}}(t_\varepsilon) \Vert_{\mathcal{B}^{2\alpha_\star}} \right) \, |z|^{\alpha_\star}\, \mu(dz) \lesssim \varepsilon^{\alpha_\star} t_\varepsilon^{-\frac{\alpha_\star}{2}}\;.
\end{equs}
Here we have also used the bound on the $\alpha_\star$th moment of the measure $\mu$. Similarly, we can obtain the bound $\Vert I_2(t_\varepsilon) \Vert_{\mathcal{C}^{\alpha_\star}} \lesssim \varepsilon^{\alpha_\star} t_\varepsilon^{-\frac{\alpha_\star}{2}}$. 

Now we set $T_1=I_1$ and $T_2=I_2 + I_3 + I_4 + I_5$ and obtain the claim.
\end{proof}

In the following proposition we prove a bound on $\mathbf{Z}^{\bar{v}}$ and $\mathbf{Z}^{v_{\varepsilon}}_{\varepsilon}$ defined in \eqref{eq:u_mild_terms} and \eqref{eq:Xi_eps_def} respectively.

\begin{proposition}
\label{prop:Xi1}
For $\gamma \in (0,1]$ and $\kappa > 0$ small enough we have the estimate
\begin{equs}
\Vert \mathbf{Z}^{\bar{v}}(t_\varepsilon) - \mathbf{Z}^{v_{\varepsilon}}_{\varepsilon}(t_\varepsilon)\Vert_{\mathcal{C}^{\gamma}} \lesssim t_\varepsilon^{\alpha - \frac{1}{2}(\gamma + \kappa)} \left(\mathcal{D}_{\varepsilon}(t_\varepsilon) + \varepsilon^{\alpha_\star - \alpha - \kappa}\right),
\end{equs}
where $\mathcal{D}_{\varepsilon}$ is defined in \eqref{eq:D_eps}.
\end{proposition}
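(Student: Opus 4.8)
The plan is to subtract the two mild representations in \eqref{eq:u_mild_terms} and \eqref{eq:Xi_eps_def}, writing
\[
\mathbf{Z}^{\bar{v}}(t_\varepsilon) - \mathbf{Z}^{v_{\varepsilon}}_{\varepsilon}(t_\varepsilon) = \int_0^{t_\varepsilon} \partial_x\Big(S_{t_\varepsilon-s}Z(s) - S^{(\varepsilon)}_{t_\varepsilon-s}Z_\varepsilon(s)\Big)\,ds\;,
\]
and then to rewrite the integrand using the decomposition $Z_\varepsilon(s) = Z(s) - T_1(s) - T_2(s)$ from the previous lemma. That lemma applies at every intermediate time $s\le t_\varepsilon\le\varrho_{K,\varepsilon}$ (with $s$ in place of $t_\varepsilon$ in the bounds, and with $\mathcal{D}_\varepsilon(s)\le\mathcal{D}_\varepsilon(t_\varepsilon)$ since all the norms building $\mathcal{D}_\varepsilon$ are non-decreasing in the time horizon). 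After regrouping so that $Z(s)-T_2(s)$ --- which by \eqref{eq:Z_0} and the bound on $T_2$ lies in $\mathcal{C}^{\alpha_\star}$ with $\Vert Z(s)-T_2(s)\Vert_{\mathcal{C}^{\alpha_\star}}\lesssim s^{-\alpha_\star/2}$ --- stays together, the integrand becomes a sum of four pieces: $\partial_x S_{t_\varepsilon-s}T_1(s)$, $\partial_x S_{t_\varepsilon-s}T_2(s)$, $\partial_x\big(S_{t_\varepsilon-s}-S^{(\varepsilon)}_{t_\varepsilon-s}\big)\big(Z(s)-T_2(s)\big)$ and $-\partial_x\big(S_{t_\varepsilon-s}-S^{(\varepsilon)}_{t_\varepsilon-s}\big)T_1(s)$.

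The first and last pieces carry the dominant endpoint singularity and produce the asserted bound. For $\int_0^{t_\varepsilon}\partial_x S_{t_\varepsilon-s}T_1(s)\,ds$ I would use the smoothing estimate $\Vert\partial_x S_\tau\Vert_{\mathcal{C}^\alpha\to\mathcal{C}^\gamma}\lesssim\tau^{-\frac{1+\gamma-\alpha}{2}}$ (Lemma \ref{lem:S_reg}), the bound $\Vert T_1(s)\Vert_{\mathcal{C}^\alpha}\lesssim s^{\frac{\alpha-1}{2}}\big(\mathcal{D}_\varepsilon(t_\varepsilon)+\varepsilon^{\alpha_\star-\alpha-\kappa}\big)$, and the elementary estimate $\int_0^{t_\varepsilon}(t_\varepsilon-s)^{-a}s^{-b}\,ds\lesssim t_\varepsilon^{1-a-b}$, which is applicable because $\gamma\le1$ forces $a=\tfrac{1+\gamma-\alpha}{2}<1$ and $b=\tfrac{1-\alpha}{2}<1$; this yields a contribution $\lesssim t_\varepsilon^{\alpha-\gamma/2}\big(\mathcal{D}_\varepsilon(t_\varepsilon)+\varepsilon^{\alpha_\star-\alpha-\kappa}\big)$. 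For the last piece I would instead invoke Lemma \ref{lem:S_S_eps} with a small parameter, $\Vert\partial_x(S_\tau-S^{(\varepsilon)}_\tau)\Vert_{\mathcal{C}^\alpha\to\mathcal{C}^\gamma}\lesssim\varepsilon^{\kappa}\tau^{-\frac{1+\gamma-\alpha+\kappa}{2}}$, and repeat the computation; it produces the power $t_\varepsilon^{\alpha-\frac12(\gamma+\kappa)}$ and an extra factor $\varepsilon^{\kappa}\le1$, which is exactly the $\kappa$-loss in the statement. Since $t_\varepsilon\le T$ may be taken at most $1$, the first contribution is also $\lesssim t_\varepsilon^{\alpha-\frac12(\gamma+\kappa)}\big(\cdots\big)$, so these two pieces together give $t_\varepsilon^{\alpha-\frac12(\gamma+\kappa)}\big(\mathcal{D}_\varepsilon(t_\varepsilon)+\varepsilon^{\alpha_\star-\alpha-\kappa}\big)$.

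The remaining two pieces are of strictly higher order. For $\int_0^{t_\varepsilon}\partial_x S_{t_\varepsilon-s}T_2(s)\,ds$, using $\Vert\partial_x S_\tau\Vert_{\mathcal{C}^{\alpha_\star}\to\mathcal{C}^\gamma}\lesssim\tau^{-\frac{1+\gamma-\alpha_\star}{2}}$, $\Vert T_2(s)\Vert_{\mathcal{C}^{\alpha_\star}}\lesssim\varepsilon^{3\alpha_\star-1}s^{-\alpha_\star/2}$ and the same elementary integral gives $\lesssim\varepsilon^{3\alpha_\star-1}t_\varepsilon^{(1-\gamma)/2}$; here $3\alpha_\star-1=\alpha_\star-2\alpha$, which for the fixed small $\alpha$ is subsumed into $\varepsilon^{\alpha_\star-\alpha-\kappa}$ (up to an innocuous adjustment of $\kappa$), while $(1-\gamma)/2$ exceeds $\alpha-\tfrac12(\gamma+\kappa)$. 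For $\int_0^{t_\varepsilon}\partial_x\big(S_{t_\varepsilon-s}-S^{(\varepsilon)}_{t_\varepsilon-s}\big)\big(Z(s)-T_2(s)\big)\,ds$, Lemma \ref{lem:S_S_eps} with $\lambda=\alpha_\star-\kappa$ gives $\Vert\partial_x(S_\tau-S^{(\varepsilon)}_\tau)\Vert_{\mathcal{C}^{\alpha_\star}\to\mathcal{C}^\gamma}\lesssim\varepsilon^{\alpha_\star-\kappa}\tau^{-\frac{1+\gamma-\kappa}{2}}$, which against $s^{-\alpha_\star/2}$ integrates ($\alpha_\star<1$) to $\lesssim\varepsilon^{\alpha_\star-\kappa}t_\varepsilon^{\frac{1-\gamma+\kappa-\alpha_\star}{2}}$; comparing the $\varepsilon$- and $t_\varepsilon$-exponents (using $\varepsilon\le1$ and $t_\varepsilon\le1$) again shows this is dominated by $t_\varepsilon^{\alpha-\frac12(\gamma+\kappa)}\varepsilon^{\alpha_\star-\alpha-\kappa}$. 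Summing the four pieces gives the claim.

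The main obstacle is the balancing of the two endpoint singularities in the principal term: the $s\to0$ blow-up $s^{(\alpha-1)/2}$ of $T_1(s)$ --- inherited from the $\mathcal{C}^1$-singularities of $U(s)$ and of $U(s)-U_\varepsilon(s)$ near the initial time --- must be integrated against the $s\to t_\varepsilon$ singularity $(t_\varepsilon-s)^{-\frac{1+\gamma-\alpha}{2}}$ coming from $\partial_x$ and the heat smoothing, and one has to check that the two exponent losses add up to exactly $1-\alpha+\tfrac{\gamma}{2}$ so that the surviving power is precisely $\alpha-\tfrac{\gamma}{2}$, which is only just integrable for $\gamma$ up to $1$. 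A secondary subtlety, already used above, is that when the operator difference $S_\tau-S^{(\varepsilon)}_\tau$ is applied one cannot afford to treat $Z_\varepsilon(s)$ merely as a $\mathcal{C}^\alpha$-function: the combined regularity loss $1+\gamma-\alpha+\lambda$ would then exceed $2$ as soon as $\gamma$ is close to $1$ and $\lambda$ is the $\approx\alpha_\star$ needed to extract a useful power of $\varepsilon$; isolating the $\mathcal{C}^{\alpha_\star}$-regular part $Z(s)-T_2(s)$ and applying only a tiny smoothing parameter to the rough remainder $T_1(s)$ is precisely what circumvents this.
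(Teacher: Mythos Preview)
Your argument is correct, but it takes a more circuitous route than the paper's. The paper uses the simpler two-term decomposition
\[
\mathbf{Z}^{\bar v}(t_\varepsilon)-\mathbf{Z}^{v_\varepsilon}_\varepsilon(t_\varepsilon)
=\underbrace{\int_0^{t_\varepsilon}\partial_x\big(S_{t_\varepsilon-s}-S^{(\varepsilon)}_{t_\varepsilon-s}\big)Z(s)\,ds}_{J_1}
+\underbrace{\int_0^{t_\varepsilon}\partial_x S^{(\varepsilon)}_{t_\varepsilon-s}\big(Z(s)-Z_\varepsilon(s)\big)\,ds}_{J_2}\;,
\]
applying Lemma~\ref{lem:S_S_eps} (with $\lambda=\alpha_\star-\alpha-\kappa$) directly to $Z\in\mathcal{C}^{\alpha_\star}$ in $J_1$, and in $J_2$ splitting $Z-Z_\varepsilon=T_1+T_2$ and applying Lemma~\ref{lem:S_eps} to each part in its natural regularity ($\mathcal{C}^\alpha$ and $\mathcal{C}^{\alpha_\star}$ respectively). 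This already gives the claimed bound. Your four-term decomposition is equivalent --- you have effectively rewritten $S^{(\varepsilon)}_{t_\varepsilon-s}T_1=S_{t_\varepsilon-s}T_1-(S_{t_\varepsilon-s}-S^{(\varepsilon)}_{t_\varepsilon-s})T_1$ and regrouped --- but the extra rearrangement buys nothing. In particular, the concern you articulate at the end (that one might be forced to apply $S-S^{(\varepsilon)}$ to $Z_\varepsilon$ viewed merely as a $\mathcal{C}^\alpha$ function) does not arise in the paper's split, since the semigroup difference only ever hits $Z$, which is already $\mathcal{C}^{\alpha_\star}$; there is no need to isolate $Z-T_2$ for this purpose. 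Your treatment of the $T_1$-piece via the exact semigroup $S$ followed by the correction $(S-S^{(\varepsilon)})T_1$ is also slightly wasteful: the $\kappa$-loss you attribute to the fourth piece is already built into Lemma~\ref{lem:S_eps} for $S^{(\varepsilon)}$, so one can handle $S^{(\varepsilon)}T_1$ in one stroke.
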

\begin{proof}
We can rewrite $\mathbf{Z}^{\bar{v}} - \mathbf{Z}^{v_{\varepsilon}}_{\varepsilon}$ in the following way
\begin{equs}
\mathbf{Z}^{\bar{v}}(t_\varepsilon) - \mathbf{Z}^{v_{\varepsilon}}_{\varepsilon}(t_\varepsilon) &= \int_{0}^{t_\varepsilon} \partial_{x}( S_{t_\varepsilon-s} - S^{(\eps)}_{t_\varepsilon-s}) Z(s) \,ds + \int_{0}^{t_\varepsilon} \partial_{x} S^{(\eps)}_{t_\varepsilon-s} ( Z(s) - Z_{\varepsilon}(s)) \,ds \\
&=: J_{1} + J_{2}\;.
\end{equs}
By (\ref{eq:Z_0}) and Lemma \ref{lem:S_S_eps} with $\lambda = \alpha_\star - \alpha - \kappa$ we obtain for any $\kappa > 0$ small enough
\begin{equs}
\Vert &J_{1}\Vert_{\mathcal{C}^{\gamma}} \lesssim \int_{0}^{t_\varepsilon} \Vert S_{t_\varepsilon-s} - S^{(\eps)}_{t_\varepsilon-s}\Vert_{\mathcal{C}^{\alpha_{\star}} \rightarrow \mathcal{C}^{1 + \gamma}} \Vert Z(s)\Vert_{\mathcal{C}^{\alpha_{\star}}} \,ds\\
&\lesssim \varepsilon^{\alpha_\star - \alpha - \kappa} \int_{0}^{t_\varepsilon} (t_\varepsilon-s)^{-\frac{1}{2}(1+\gamma - \alpha)} s^{-\frac{\alpha_{\star}}{2}} \,ds \lesssim t_\varepsilon^{\frac{1}{2}(1 - \gamma + \alpha - \alpha_\star)} \varepsilon^{\alpha_\star - \alpha - \kappa}\;.\label{eq:Xi_Xi_eps_1}
\end{equs}
The second term can be estimated using Lemma \ref{lem:S_eps} and \eqref{eq:Z_0_Z_eps} by
\begin{equs}
\Vert J_{2}\Vert_{\mathcal{C}^{\gamma}} &\lesssim \int_{0}^{t_\varepsilon} \Vert S^{(\eps)}_{t_\varepsilon-s}\Vert_{\mathcal{C}^{\alpha} \rightarrow \mathcal{C}^{1 + \gamma}}\Vert T_1(s) \Vert_{\mathcal{C}^{\alpha}} \,ds + \int_{0}^{t_\varepsilon} \Vert S^{(\eps)}_{t_\varepsilon-s}\Vert_{\mathcal{C}^{\alpha_\star} \rightarrow \mathcal{C}^{1 + \gamma}} \Vert T_2 (s) \Vert_{\mathcal{C}^{\alpha_\star}} \,ds\\
&\lesssim \int_{0}^{t_\varepsilon} (t_\varepsilon-s)^{-\frac{1}{2}(1+\gamma - \alpha + \kappa)} s^{\frac{\alpha-1}{2}} \left(\mathcal{D}_{\varepsilon}(s) + \varepsilon^{\alpha_\star - \alpha - \kappa}\right) ds\\
&\quad+ \varepsilon^{3\alpha_{\star}-1} \int_{0}^{t_\varepsilon} (t_\varepsilon-s)^{-\frac{1}{2}(1+\gamma - \alpha_\star + \kappa)} s^{-\frac{\alpha_\star}{2}} \,ds \\
&\lesssim t_\varepsilon^{\alpha - \frac{1}{2}(\gamma + \kappa)} \left(\mathcal{D}_{\varepsilon}(t_\varepsilon) + \varepsilon^{\alpha_\star - \alpha - \kappa}\right) + \varepsilon^{3\alpha_{\star}-1} t_\varepsilon^{\frac{1}{2}(1 - \gamma - \kappa)}\;.
\label{eq:Xi_Xi_eps_2}
\end{equs}
Combining \eqref{eq:Xi_Xi_eps_1} and \eqref{eq:Xi_Xi_eps_2} we obtain the claimed bound.
\end{proof}


\section{Convergence of the solutions of the approximate equations}
\label{sec:convergence}

With the results from the previous sections at hand, we can prove Theorem~\ref{thm:first}.

\begin{proof}[Proof of Theorem \ref{thm:first}]
For $\alpha > 0$ as in the beginning of this section we define $p = \lfloor 1/\alpha \rfloor$. From the derivation of the bounds below we will see how small the value of $\alpha$ must be. To make the notation shorter, we introduce the following norm
\[
\Vert \cdot \Vert_{\alpha, t} := \Vert \cdot \Vert_{\mathcal{C}^{\alpha}_{t}} + \Vert \cdot \Vert_{\mathcal{C}^{1}_{(1-\alpha)/2, t}}\;.
\]

Then, using the notation \eqref{e:StoppedTime}, we obtain from \eqref{eq:u_mild} and \eqref{eq:u_eps_mild_new} the bound
\begin{equs}[eq:v_v_eps]
\Vert \bar{v} - v_{\varepsilon} \Vert_{\alpha, t_\varepsilon} &\leq \Vert \mathbf{G}^{\bar{v}} - \mathbf{G}_{\varepsilon}^{v_{\varepsilon}} \Vert_{\alpha, t_\varepsilon} + \Vert \mathbf{F}^{\bar{v}} - \mathbf{F}^{v_{\varepsilon}}_{\varepsilon} \Vert_{\alpha, t_\varepsilon} + \Vert \mathbf{H}^{\bar{v}} - \mathbf{H}^{v_{\varepsilon}}_{\varepsilon} \Vert_{\alpha, t_\varepsilon} \\
&\quad+ \Vert \mathbf{\bar H}^{v_{\varepsilon}}_{\varepsilon} \Vert_{\alpha, t_\varepsilon} + \Vert \mathbf{Z}^{\bar{v}} - \mathbf{Z}^{v_\varepsilon}_{\varepsilon} \Vert_{\alpha, t_\varepsilon}\;.
\end{equs}
We consider only time periods $t < 1$, for larger times the claim can easily be obtained 
by iteration.
To find a bound on the first term in \eqref{eq:v_v_eps} we use the results of Section \ref{sec:reaction}. Applying Proposition \ref{prop:Psi_Psi_eps} with a small constant $\kappa = \alpha$ we get
\begin{equs}
\Vert \mathbf{G}^{\bar{v}} - \mathbf{G}_{\varepsilon}^{v_{\varepsilon}} \Vert_{\alpha, t_\varepsilon} &\lesssim t_\varepsilon^{\frac{1}{2}} \Vert \bar{v} - v_{\varepsilon} \Vert_{\alpha, t_\varepsilon} \label{eq:I_1} + \Vert X - X_{\varepsilon} \Vert_{\mathcal{C}^{\alpha}_{t_\varepsilon}} \\
+& \Vert u^0 - u^0_{\varepsilon} \Vert_{\mathcal{C}^{\alpha}} + \varepsilon^{\alpha_\star - \alpha}\;.
\end{equs}

In order to bound the second term in \eqref{eq:v_v_eps}, we use Proposition \ref{prop:Phi_Phi_eps} with $\kappa = \alpha$,
\begin{equs}
\Vert \mathbf{F}^{\bar{v}} - \mathbf{F}^{v_{\varepsilon}}_{\varepsilon} \Vert_{\alpha, t_\varepsilon} &\lesssim t_\varepsilon^{\frac{1 - \alpha}{2}} \Vert \bar{v} - v_{\varepsilon} \Vert_{\mathcal{C}^{0}_{t}} + \Vert X - X_{\varepsilon} \Vert_{\mathcal{C}^{0}_{t_\varepsilon}} \\
&\quad+ \Vert u^0 - u_{\varepsilon}^0 \Vert_{\mathcal{C}^{0}} + \varepsilon^{\alpha_\star - \alpha}\;.\label{eq:I_2}
\end{equs}

Applying Proposition \ref{prop:U_U_eps} with the parameter $\kappa = \alpha$, we bound the expectation of the third term in \eqref{eq:v_v_eps} by
\begin{equs}[eq:I_3]
\mathbb{E} \Vert \mathbf{H}^{\bar{v}} - \mathbf{H}^{v_{\varepsilon}}_{\varepsilon} \Vert_{\alpha, t_\varepsilon} \lesssim T^{\frac{1 - \alpha}{2}} \mathbb{E} \Vert \bar{v} - v_\varepsilon \Vert_{\mathcal{C}^{0}_{t_\varepsilon}} + \mathbb{E} \Vert X - X_\varepsilon \Vert_{\mathcal{C}^{0}_{t_\varepsilon}} \\
+ \mathbb{E} \Vert u^0 - u_{\varepsilon}^0 \Vert_{\mathcal{C}^{0}} + \varepsilon^{ \alpha_\star - \alpha}\;,
\end{equs}
where $T > 0$ is as above \eqref{e:StoppedTime}.

A bound on the fourth term in \eqref{eq:v_v_eps} is a straightforward application of Proposition \ref{prop:U_bar},
\begin{equ}[eq:I_4]
\Vert \mathbf{\bar H}^{v_{\varepsilon}}_{\varepsilon} \Vert_{\mathcal{C}^{\alpha}_{t_\varepsilon}} + \Vert \mathbf{\bar H}^{v_{\varepsilon}}_{\varepsilon} \Vert_{\mathcal{C}^{1}_{t_\varepsilon}} \lesssim \varepsilon^{3\alpha_{\star}-1}\;.
\end{equ}
Using Proposition \ref{prop:Xi1} with the small parameter $\kappa = \alpha / 2$ we can bound the last term in \eqref{eq:v_v_eps} by
\begin{equs}[eq:I_5]
\Vert \mathbf{Z}^{\bar{v}} - \mathbf{Z}^{v_\varepsilon}_{\varepsilon} \Vert_{\alpha, t_\varepsilon} \lesssim t_\varepsilon^{\frac{\alpha}{4}} \mathcal{D}_{\varepsilon}(t_\varepsilon) + \varepsilon^{\alpha_\star - 3 \alpha /2}\;,
\end{equs}
where $\mathcal{D}_\varepsilon$ is defined in \eqref{eq:D_eps}.

Combining the bounds \eqref{eq:v_v_eps}--\eqref{eq:I_5} together we obtain
\begin{equs}[eq:E_first]
\mathbb{E} \Vert \bar{v} - v_{\varepsilon} \Vert_{\alpha, t_\varepsilon} \lesssim T^{\frac{\alpha}{4}} \mathbb{E} \Vert \bar{v} - v_{\varepsilon} \Vert_{\alpha, t_\varepsilon} + \mathbb{E} \Vert u^{0} - u_{\varepsilon}^{0} \Vert_{\mathcal{C}^{\alpha}} + \mathbb{E} \Vert X - X_{\varepsilon} \Vert_{\mathcal{C}^{\alpha}_{t_\varepsilon}} \\
+ \mathbb{E} \VERT \mathbf{X} - \mathbf{X}_\varepsilon \VERT_{\alpha, t_\varepsilon} + \varepsilon^{\frac{1}{2} - 3\alpha}\;,
\end{equs}
where we have used $\alpha_\star = \frac{1}{2} - \alpha$. By Lemma \ref{lem:X_X_eps} we can bound the norms of the controlling processes,
\begin{equs}
\mathbb{E} \Vert X - X_{\varepsilon} \Vert_{\mathcal{C}^{\alpha}_{t_\varepsilon}} + \mathbb{E} \VERT \mathbf{X} - \mathbf{X}_\varepsilon \VERT_{\alpha, t_\varepsilon} \lesssim \varepsilon^{\frac{1}{2} - 2\alpha}\;.
\end{equs}
Furthermore, by choosing $T$ in \eqref{e:StoppedTime} small enough we can absorb the first term on the right-hand side of \eqref{eq:E_first} into the left-hand side and obtain
\begin{equ}[eq:second]
\mathbb{E} \Vert \bar{v} - v_{\varepsilon} \Vert_{\alpha, t_\varepsilon} \leq C \left(\mathbb{E} \Vert u^{0} - u_{\varepsilon}^{0} \Vert_{\mathcal{C}^{\alpha}} + \varepsilon^{\frac{1}{2} - 3\alpha}\right).
\end{equ}

From the definition of $\bar{u}$ via $\bar{v}$ and \eqref{eq:second} we conclude
\begin{equs}
\mathbb{E} \Vert \bar{u} - u_{\varepsilon} \Vert_{\mathcal{C}^{\alpha}_{t_\varepsilon}} &\leq \mathbb{E} \Vert \bar{v} - v_{\varepsilon} \Vert_{\mathcal{C}^{\alpha}_{t_\varepsilon}} + \mathbb{E} \Vert X - X_{\varepsilon} \Vert_{\mathcal{C}^{\alpha}_{t_\varepsilon}} +  \mathbb{E} \Vert U - U_{\varepsilon} \Vert_{\mathcal{C}^{\alpha}_{t_\varepsilon}}\\
&\leq C \mathbb{E} \Vert u^{0} - u_{\varepsilon}^{0} \Vert_{\mathcal{C}^{\alpha}} + \varepsilon^{\frac{1}{2} - 3\alpha}\;.
\end{equs}
Here, we have also used Lemma \ref{lem:X_X_eps} and the bound
\begin{equs}
\Vert U(t) - U_{\varepsilon}(t) \Vert_{\mathcal{C}^{\alpha}} &\lesssim \Vert u^0 - u_\varepsilon^0 \Vert_{\mathcal{C}^{\alpha}} + \Vert X(0) - X_\varepsilon(0) \Vert_{\mathcal{C}^{\alpha}} \\
&\quad + \varepsilon^{\alpha_\star - 2\alpha} \left(\Vert u_\varepsilon^0 \Vert_{\mathcal{C}^{\alpha_\star}} + \Vert X_\varepsilon(0) \Vert_{\mathcal{C}^{\alpha_\star}} \right),
\end{equs}
which can be derived similarly to \eqref{eq:initial_eps}. The rest of the proof is almost identical to the proof of \cite[Theorem 1.5]{HMW14}.
\end{proof}

\appendix

\section{Regularity of distribution-valued processes}
\label{sec:distributions}

In this section we introduce the Besov spaces and give a Kolmogorov-like criterion for distribution-valued processes to belong to these spaces.

Any distribution $\psi$ defined on the circle $\mathbb{T}$ can be written as the Fourier series
\[
\psi(x) = \frac{1}{\sqrt{2\pi}} \sum_{k \in \intgr} \hat{\psi}(k) e^{ikx}\;.
\]
For $n \geq 1$ we define the $n$th Paley-Littlewood block of $\psi$ as
\[
\delta_n \psi(x) := \frac{1}{\sqrt{2\pi}} \sum_{2^{n-1} \leq |k| < 2^n} \hat{\psi}(k) e^{ikx}\;,
\]
and by definition $\delta_0 \psi \equiv \hat{\psi}(0) / \sqrt{2\pi}$. 

\begin{definition}
For any $\alpha \in \real$, the Besov space $\mathcal{B}^{\alpha}_{\infty, \infty}(\mathbb{T})$ consists of those distributions on $\mathbb{T}$, for which the norm
\[
\Vert \psi \Vert_{\mathcal{B}^{\alpha}_{\infty, \infty}} := \sup_{n \geq 0} 2^{\alpha n} \Vert \delta_n \psi \Vert_{\mathcal{C}^0}
\]
is finite. We denote $\mathcal{C}^\alpha(\mathbb{T}) = \mathcal{B}^{\alpha}_{\infty, \infty}(\mathbb{T})$ for $\alpha < 0$.
\end{definition}

For $\alpha \in (0,1)$ the Besov space $\mathcal{B}^{\alpha}_{\infty, \infty}(\mathbb{T})$ coincides with the H\"{o}lder space $\mathcal{C}^{\alpha}(\mathbb{T})$. The proof of this fact and more information on the Besov spaces can be found in \cite{BCD11}.

For $n \geq 1$ we define the Dirichlet kernel
\begin{equ}
D_n (x) := \frac{1}{\sqrt{2\pi}} \sum_{|k| < 2^n} e^{ikx} = \frac{1}{\sqrt{2\pi}} \frac{\sin\left( \left(2^n - \frac{1}{2} \right) x\right)}{\sin\left( \frac{1}{2}x \right)}\;,
\end{equ}
and $D_0 \equiv 1$.

The following Lemma provides a bound on the Dirichlet kernel $D_n$ in $L^p$ spaces.

\begin{lemma}
\label{lem:D_bound}
For every $1 < p \leq \infty$ there is a constant $C = C(p)$ such that
\[
\Vert D_n \Vert_{L^p(\mathbb{T})} \leq C 2^{\frac{n}{p'}}
\]
holds for every $n \geq 0$, where $p'$ is the conjugate exponent of $p$.
\end{lemma}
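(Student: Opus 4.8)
The plan is to work directly from the closed form
\[
D_n(x) = \frac{1}{\sqrt{2\pi}}\,\frac{\sin\bigl((2^n-\tfrac12)x\bigr)}{\sin(x/2)}\;,\qquad x\in\mathbb{T}=[-\pi,\pi]\;,
\]
and to reduce the whole statement to a single pointwise envelope bound. First I would record the two elementary estimates $\bigl|\sin\bigl((2^n-\tfrac12)x\bigr)\bigr|\le\min\bigl(1,(2^n-\tfrac12)|x|\bigr)$ and $|\sin(x/2)|\ge|x|/\pi$ for $|x|\le\pi$, the latter because $\sin$ lies above its chord on $[0,\pi/2]$ so that $\sin(x/2)\ge\tfrac2\pi\cdot\tfrac x2$ there, and then extend by evenness. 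Dividing one bound by the other gives
\[
|D_n(x)|\lesssim \min\bigl(2^n,\,|x|^{-1}\bigr)\;,\qquad x\in\mathbb{T}\;,
\]
with a constant independent of $n$.

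For $p=\infty$ (so $p'=1$) this envelope already yields $|D_n(x)|\lesssim 2^n=2^{n/p'}$, which is the claim. For $1<p<\infty$ I would integrate the $p$-th power of the envelope, splitting $\mathbb{T}$ at $|x|=2^{-n}$: on $\{|x|\le 2^{-n}\}$ the envelope equals $2^n$ and contributes at most $2^{-n}\cdot 2^{np}=2^{n(p-1)}$, while on $\{2^{-n}\le|x|\le\pi\}$ it equals $|x|^{-1}$ and contributes at most $2\int_{2^{-n}}^{\pi}x^{-p}\,dx$. This is precisely the step where the hypothesis $p>1$ is used: for $p>1$ the last integral is dominated by $\tfrac{1}{p-1}(2^{-n})^{1-p}=\tfrac1{p-1}2^{n(p-1)}$. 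Hence $\Vert D_n\Vert_{L^p}^p\lesssim 2^{n(p-1)}$, and taking $p$-th roots gives $\Vert D_n\Vert_{L^p}\lesssim 2^{n(p-1)/p}=2^{n/p'}$, with a constant $C=C(p)$ (which degenerates as $p\downarrow1$, consistent with the well-known logarithmic growth of the Lebesgue constant).

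I do not expect a genuine obstacle: the only points needing care are the chord bound for $\sin(x/2)$ near the origin, which is what produces the integrable singularity, and treating $p=\infty$ on its own rather than as a formal limit. An alternative, essentially equivalent, route is to prove the two endpoint bounds $\Vert D_n\Vert_{L^2}\lesssim 2^{n/2}$ (by Parseval, since $D_n$ has $\lesssim 2^n$ unit Fourier coefficients) and $\Vert D_n\Vert_{L^\infty}\lesssim 2^n$ and then interpolate to cover $p\in[2,\infty]$; but this still leaves $1<p<2$ to the direct computation above, so I would simply run the envelope argument for all $p\in(1,\infty]$ uniformly.
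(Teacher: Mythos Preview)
Your proof is correct and follows essentially the same approach as the paper: both arguments rest on the pointwise envelope $|D_n(x)|\lesssim\min(2^n,|x|^{-1})$ and then integrate, the paper via a change of variables $x\mapsto 2^{-n}x$ that pulls out the factor $2^{n(p-1)}$ and leaves an integrand bounded by $1\wedge|x|^{-p}$, while you split the original integral at $|x|=2^{-n}$ directly. The two computations are the same up to this cosmetic reorganisation.
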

\begin{proof}
In the case $p = \infty$, the function can be bounded by its value at $0$, which gives $|D_n(x)| \leq 2^{n+1}$. If $1 < p < \infty$, then we can rewrite
\begin{equs}
\Vert D_n \Vert_{L^p(\mathbb{T})}^p &= \frac{1}{(2\pi)^{p/2}} \int_{-\pi}^\pi \left| \frac{\sin\left( \left(2^n - \frac{1}{2} \right) x\right)}{\sin\left( \frac{1}{2}x \right)} \right|^p dx \\
&= \frac{2^{n(p-1)}}{(2\pi)^{p/2}} \int_{-\pi 2^n}^{\pi 2^n} \left| \frac{\sin\left( \left(1 - 2^{-(n+1)} \right) x\right)}{2^n \sin\left( 2^{-(n+1)}x \right)} \right|^p dx\;.
\end{equs}
The latter integral is bounded by a constant $C(p)$, since the integrand can be estimated up to a constant multiplier by $1 \wedge |x|^{-p}$. That gives the claimed estimate.
\end{proof}

Now, we provide a Kolmogorov-like criterion for distribution-valued processes.

\begin{lemma}
\label{lem:Kolmogorov}
Let $\psi$ be a random field on $[0,T] \times \mathbb{T}$, such that for every $t \in [0,T]$, $\psi(t)$ is a distribution taking values in a fixed Wiener chaos. Furthermore, let us assume that for every $n \geq 0$ the $n$th Paley-Littlewood block satisfies
\begin{equs}
\mathbb{E}\left[ \left| \delta_n \psi(t,x) \right|^2 \right] &\leq A 2^{-2n \alpha}\\
\mathbb{E}\left[ \left| \delta_n \psi(t,x) - \delta_n \psi(s,x) \right|^2 \right] &\leq B 2^{-2 n \alpha} |t-s|^\delta\;,
\end{equs}
for every $x \in \mathbb{T}$, and $t,s \in [0,T]$, and some constants $A, B >0$, $\delta > 0$ and $\alpha < 1$, $\alpha \neq 0$. Then, for any $\gamma < \alpha$, $\gamma \neq 0$, there is a constant $C = C(\alpha, \gamma)$ such that
\begin{equ}
\label{eq:psi_bound_0}
\mathbb{E} \Vert \psi \Vert_{\mathcal{C}_T^{\gamma}} \leq C (A + B)^{\frac{1}{2}}\;.
\end{equ} 
\end{lemma}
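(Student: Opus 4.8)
The plan is to upgrade the second-moment hypotheses to bounds on all moments using hypercontractivity, to convert these into summable bounds on the supremum norms of the Paley--Littlewood blocks of $\psi(t)$, and then to run a Banach-space-valued Kolmogorov continuity argument in the time variable.

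First, since for each $t$ the random distribution $\psi(t)$ lives in a fixed, say the $k$-th, Wiener chaos, and the block projection $\delta_n$ is a bounded linear operation, $\delta_n\psi(t,x)$ also belongs to the $k$-th chaos. By equivalence of moments in a fixed Wiener chaos, for every $q\ge2$ one obtains, with proportionality constants depending only on $q$ and $k$,
\begin{equs}
\mathbb{E}\bigl[|\delta_n\psi(t,x)|^q\bigr] &\lesssim \bigl(A\,2^{-2n\alpha}\bigr)^{q/2}\;,\\
\mathbb{E}\bigl[|\delta_n\psi(t,x)-\delta_n\psi(s,x)|^q\bigr] &\lesssim \bigl(B\,2^{-2n\alpha}|t-s|^{\delta}\bigr)^{q/2}\;.
\end{equs}
Next I would pass from these $L^q$-in-$x$ bounds to supremum-in-$x$ bounds. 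For fixed $t$ and $\omega$, $\delta_n\psi(t)$ is a trigonometric polynomial with spectrum in $\{2^{n-1}\le|k|<2^n\}$, hence it is reproduced by convolution with the Dirichlet kernel $D_n$; H\"{o}lder's inequality and Lemma~\ref{lem:D_bound} applied with exponent $q'$ (whose conjugate is $q$) give $\Vert\delta_n\psi(t)\Vert_{\mathcal{C}^0}\lesssim 2^{n/q}\Vert\delta_n\psi(t)\Vert_{L^q(\mathbb{T})}$. Raising to the power $q$, taking expectations and using Fubini together with the above,
\begin{equ}
\mathbb{E}\bigl[\Vert\delta_n\psi(t)\Vert_{\mathcal{C}^0}^q\bigr]\lesssim 2^n\int_{\mathbb{T}}\mathbb{E}\bigl[|\delta_n\psi(t,x)|^q\bigr]\,dx\lesssim A^{q/2}\,2^{n(1-q\alpha)}\;,
\end{equ}
and in the same way $\mathbb{E}[\Vert\delta_n\psi(t)-\delta_n\psi(s)\Vert_{\mathcal{C}^0}^q]\lesssim B^{q/2}\,2^{n(1-q\alpha)}|t-s|^{q\delta/2}$.

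Now I would fix an exponent $\gamma'$ with $\gamma<\gamma'<\alpha$ and $\gamma'\neq0$, and choose $q$ so large that $q(\alpha-\gamma')>1$ and $q\delta/2>1$. Bounding $\Vert\varphi\Vert_{\mathcal{B}^{\gamma'}_{\infty,\infty}}^q\le\sum_{n\ge0}2^{q\gamma'n}\Vert\delta_n\varphi\Vert_{\mathcal{C}^0}^q$ and summing the two estimates just obtained, the geometric series $\sum_n2^{n(1-q(\alpha-\gamma'))}$ converges and yields, uniformly in $s,t\in[0,T]$,
\begin{equs}
\mathbb{E}\Vert\psi(t)\Vert_{\mathcal{B}^{\gamma'}_{\infty,\infty}}^q\lesssim A^{q/2}\;,\qquad \mathbb{E}\Vert\psi(t)-\psi(s)\Vert_{\mathcal{B}^{\gamma'}_{\infty,\infty}}^q\lesssim B^{q/2}|t-s|^{q\delta/2}\;.
\end{equs}
Viewing $t\mapsto\psi(t)$ as a process with values in the Banach space $\mathcal{B}^{\gamma'}_{\infty,\infty}(\mathbb{T})$, the increment bound has temporal exponent $q\delta/2>1$, so the Kolmogorov continuity criterion \cite{Kal02} applies: $\psi$ has a continuous modification and, combining the resulting time-regularity bound with the first estimate at a single time, $\mathbb{E}\bigl[\sup_{t\in[0,T]}\Vert\psi(t)\Vert_{\mathcal{B}^{\gamma'}_{\infty,\infty}}^q\bigr]\lesssim A^{q/2}+B^{q/2}\lesssim(A+B)^{q/2}$. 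Since $\gamma'>\gamma$, one has the continuous embedding $\mathcal{B}^{\gamma'}_{\infty,\infty}\hookrightarrow\mathcal{B}^{\gamma}_{\infty,\infty}=\mathcal{C}^{\gamma}$, so $\mathbb{E}\Vert\psi\Vert_{\mathcal{C}^\gamma_T}^q\lesssim(A+B)^{q/2}$, and \eqref{eq:psi_bound_0} follows from Jensen's inequality.

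The step I expect to be the main obstacle is the passage from $L^q$-in-space to supremum-in-space: one has to use the spectral localisation of each Paley--Littlewood block so that the unavoidable loss $2^{n/q}$ coming from Lemma~\ref{lem:D_bound} is affordable, and then compensate for it by taking $q$ large, which is legitimate only because hypercontractivity promotes the hypothesised $L^2$ bounds to $L^q$ for every $q$. The rest --- selecting $\gamma'\in(\gamma,\alpha)$ and $q$ large enough for both the dyadic sum and the temporal Kolmogorov exponent to converge, and invoking the Besov--H\"{o}lder identification $\mathcal{C}^\gamma=\mathcal{B}^\gamma_{\infty,\infty}$ valid for $\gamma\neq0$, $\gamma<1$ --- is routine.
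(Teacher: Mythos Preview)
Your proposal is correct and follows essentially the same route as the paper: hypercontractivity to upgrade moments, the Dirichlet-kernel bound (Lemma~\ref{lem:D_bound}) together with H\"older's inequality to pass from $L^q(\mathbb{T})$ to $L^\infty(\mathbb{T})$ on each block, summation over dyadic blocks using the Besov characterisation, and finally a Banach-space-valued Kolmogorov criterion in time. The only cosmetic difference is the order of operations: the paper first bounds $\mathbb{E}\Vert\delta_n\psi(t)\Vert_{L^\infty}^2$ and sums to obtain $\mathbb{E}\Vert\psi(t)\Vert_{\mathcal{C}^\gamma}^2$, and only then invokes Nelson's lemma on the norm to reach higher moments before Kolmogorov, whereas you fix a large $q$ from the outset and work with $q$-th moments of scalar pointwise quantities throughout; your ordering is arguably cleaner since it applies hypercontractivity only to scalar chaos elements rather than to a Banach-space norm.
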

\begin{proof}
We can notice that $\delta_n \psi(t,x) = D_n * \delta_n \psi(t,x)$, where the convolution is taken over the variable $x \in \mathbb{T}$. Therefore, the H\"{o}lder inequality yields
\begin{equ}[eq:psi_bound_1]
|\delta_n \psi(t,x)| \leq \Vert D_n \Vert_{L^{p'}(\mathbb{T})} \Vert \delta_n \psi(t) \Vert_{L^p(\mathbb{T})}\;,
\end{equ}
for any $p \geq 1$, where as usual $p'$ is the exponent conjugate of $p$. Since $\psi(t)$ belongs to a fixed Wiener chaos, the same is true for the Paley-Littlewood block $\delta_n \psi(t)$, and we can apply Nelson's lemma to it \cite{Nel73}, saying that all moments of $\delta_n \psi(t)$ are bounded up to a constant multiplier by its second moment. Therefore, 
\begin{equ}[eq:psi_bound_2]
\mathbb{E} \Vert \delta_n \psi(t) \Vert^p_{L^p(\mathbb{T})} \lesssim \int_{\mathbb{T}} \left(\mathbb{E} |\delta_n \psi(t, x)|^2\right)^{\frac{p}{2}}\, dx \lesssim \left(A 2^{-2 n\alpha}\right)^{\frac{p}{2}}\;,
\end{equ}
where the proportionality constant depends on $p$. Combining the bounds \eqref{eq:psi_bound_1}, \eqref{eq:psi_bound_2} together with Lemma \ref{lem:D_bound} and Jensen's inequality, we derive
\begin{equs}
\mathbb{E} \Vert \delta_n \psi(t) \Vert^2_{L^\infty} &\leq \Vert D_n \Vert_{L^{p'}(\mathbb{T})}^2 \mathbb{E} \Vert \delta_n \psi(t) \Vert_{L^p(\mathbb{T})}^2 \\
&\leq \Vert D_n \Vert_{L^{p'}(\mathbb{T})}^2 \left(\mathbb{E} \Vert \delta_n \psi(t) \Vert^p_{L^p(\mathbb{T})}\right)^{\frac{2}{p}} \lesssim A 2^{2 n(\frac{1}{p'} - \alpha)}\;.
\end{equs}
Since for $\gamma < 1$, $\gamma \neq 0$, the space $\mathcal{C}^\gamma$ coincides with the Besov space $\mathcal{B}^\gamma_{\infty, \infty}$, we obtain
\begin{equs}
\mathbb{E} \Vert \psi(t) \Vert^2_{\mathcal{C}^{\gamma}} = \mathbb{E} \Big[\sup_{n \geq 0} 2^{2 n \gamma} \Vert \delta_n \psi(t) \Vert^2_{\mathcal{C}^0} \Big] &\leq \sum_{n \geq 0} 2^{2 n \gamma} \mathbb{E} \Vert \delta_n \psi(t) \Vert^2_{\mathcal{C}^0} \\
&\leq C A \sum_{n \geq 0} 2^{2 n (\gamma + \frac{1}{p'} - \alpha)}\;,
\end{equs}
which is finite if $\gamma < \alpha - \frac{1}{p'}$. Finally, we can notice that for any $\gamma < \alpha$, we can choose $p' \geq 1$ large enough such that $\gamma < \alpha - \frac{1}{p'}$, so that
\begin{equ}
\label{eq:psi_bound_3}
\mathbb{E} \Vert \psi(t) \Vert^2_{\mathcal{C}^{\gamma}} \leq C(\alpha, \gamma) A\;,
\end{equ}
for every $\gamma < \alpha$. Repeating the same argument for $\delta_n \psi(t) - \delta_n \psi(s)$, we derive
\begin{equ}
\label{eq:psi_bound_4}
\mathbb{E} \Vert \psi(t) - \psi(s) \Vert^2_{\mathcal{C}^{\gamma}} \leq C(\alpha, \gamma) B |t-s|^{\delta}\;.
\end{equ}

Since $\psi(t)$ belongs to a fixed Wiener chaos, Nelson's lemma \cite{Nel73} yields equivalence of moments for $\Vert \psi(t) \Vert_{\mathcal{C}^{\gamma}}$ and $\Vert \psi(t) - \psi(s) \Vert_{\mathcal{C}^{\gamma}}$, and we can finish the proof by applying the Banach space-valued version of the Kolmogorov continuity criterion \cite[Lem.~B.3]{HMW14}, which gives the estimate \eqref{eq:psi_bound_0} from \eqref{eq:psi_bound_3} and \eqref{eq:psi_bound_4}.
\end{proof}

The following Lemma provides a bound on the product of two distributions from certain H\"{o}lder spaces.

\begin{lemma}
\label{lem:multipl}
Let $\varphi \in \mathcal{C}^\alpha$ and $\psi \in \mathcal{C}^\beta$, where $\beta < 0 < \alpha < 1$ with $\alpha + \beta > 0$. Then there is a constant $C = C(\alpha, \beta)$ such that 
\[
\Vert \varphi \psi \Vert_{\mathcal{C}^\beta} \leq C \Vert \varphi \Vert_{\mathcal{C}^\alpha} \Vert \psi \Vert_{\mathcal{C}^\beta}\;.
\]
\end{lemma}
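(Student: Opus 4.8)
The plan is to run the classical Bony paraproduct decomposition with respect to the sharp Littlewood--Paley blocks $\delta_n$ fixed above. First I would write, at least formally,
\[
\varphi\psi=\sum_{i,j\geq 0}\delta_i\varphi\,\delta_j\psi\;,
\]
and split the double sum into the three regions $i\leq j-2$, $|i-j|\leq 1$ and $j\leq i-2$. The first step is an elementary Fourier-support observation for the sharp cutoffs: if $i\leq j-2$ then all Fourier modes of $\delta_i\varphi\,\delta_j\psi$ lie in an annulus $\{c\,2^{j}\leq|k|<C\,2^{j}\}$, so that $\delta_n(\delta_i\varphi\,\delta_j\psi)=0$ unless $|n-j|\leq N_0$ for a fixed integer $N_0$; symmetrically for $j\leq i-2$, with $i$ in place of $j$; whereas if $|i-j|\leq 1$ the modes only lie in a \emph{ball} $\{|k|<C\,2^{j}\}$, so that $\delta_n(\delta_i\varphi\,\delta_j\psi)=0$ unless $j\geq n-N_0$.

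With this at hand I would bound $2^{n\beta}\Vert\delta_n(\varphi\psi)\Vert_{\mathcal{C}^0}$ region by region, using only $\Vert\delta_k\varphi\Vert_{\mathcal{C}^0}\lesssim 2^{-k\alpha}\Vert\varphi\Vert_{\mathcal{C}^\alpha}$ and $\Vert\delta_k\psi\Vert_{\mathcal{C}^0}\lesssim 2^{-k\beta}\Vert\psi\Vert_{\mathcal{C}^\beta}$ (for $\alpha\in(0,1)$ the former is just the identification of $\mathcal{C}^\alpha$ with $\mathcal{B}^\alpha_{\infty,\infty}$). In the region $i\leq j-2$ one sums over $i$ first: since $\alpha>0$ the series $\sum_{i\geq 0}2^{-i\alpha}$ converges, hence $\sum_{i\leq j-2}\Vert\delta_i\varphi\Vert_{\mathcal{C}^0}\lesssim\Vert\varphi\Vert_{\mathcal{C}^\alpha}$ — so, crucially, no Lebesgue-constant logarithm appears despite the cutoffs being sharp — and the annulus statement leaves only the finitely many $j$ with $|n-j|\leq N_0$, giving $2^{n\beta}\Vert\cdot\Vert_{\mathcal{C}^0}\lesssim\Vert\varphi\Vert_{\mathcal{C}^\alpha}\Vert\psi\Vert_{\mathcal{C}^\beta}$. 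In the region $j\leq i-2$ one sums over $j$ first: since $\beta<0$ the geometric sum $\sum_{j\leq i-2}2^{-j\beta}$ is dominated by its last term, $\lesssim 2^{-i\beta}$, and together with $\Vert\delta_i\varphi\Vert_{\mathcal{C}^0}\lesssim 2^{-i\alpha}\Vert\varphi\Vert_{\mathcal{C}^\alpha}$ and $|n-i|\leq N_0$ this yields $\Vert\delta_n(\cdot)\Vert_{\mathcal{C}^0}\lesssim 2^{-n(\alpha+\beta)}\Vert\varphi\Vert_{\mathcal{C}^\alpha}\Vert\psi\Vert_{\mathcal{C}^\beta}$. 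In the resonant region $|i-j|\leq 1$ the ball statement forces $j\geq n-N_0$, and since $\alpha+\beta>0$ the series $\sum_{j\geq n-N_0}2^{-j(\alpha+\beta)}$ converges, so $\Vert\delta_n(\cdot)\Vert_{\mathcal{C}^0}\lesssim 2^{-n(\alpha+\beta)}\Vert\varphi\Vert_{\mathcal{C}^\alpha}\Vert\psi\Vert_{\mathcal{C}^\beta}$ as well. Multiplying the last two estimates by $2^{n\beta}$ produces a harmless factor $2^{-n\alpha}\leq 1$, so in all three regions $2^{n\beta}\Vert\delta_n(\varphi\psi)\Vert_{\mathcal{C}^0}\lesssim\Vert\varphi\Vert_{\mathcal{C}^\alpha}\Vert\psi\Vert_{\mathcal{C}^\beta}$; taking the supremum over $n\geq 0$ gives the claim.

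The only point that is not a routine computation is the meaning of the product itself, since $\psi$ is a genuine distribution. I would define $\varphi\psi$ in the usual way, as the limit in $\mathcal{C}^\beta$ of the products $\varphi_N\psi_N$ of the Fourier truncations $\varphi_N,\psi_N$ of $\varphi,\psi$ (equivalently, through the Bony decomposition), and note that the three series above define elements of $\mathcal{C}^\beta$ by the bounds just derived — the $n$-th block of the $i\leq j-2$ part becomes independent of the truncation once $N$ is large, while the $j\leq i-2$ and resonant parts converge geometrically thanks to $\alpha+\beta>0$ — and that on smooth functions this limit coincides with the pointwise product; alternatively one may simply specialise the paraproduct estimates of \cite{BCD11} to $\mathcal{B}^\alpha_{\infty,\infty}\times\mathcal{B}^\beta_{\infty,\infty}$. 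Thus the genuinely load-bearing hypothesis is $\alpha+\beta>0$, which is exactly what makes the resonant sum summable, whereas the sharpness of the blocks $\delta_n$ causes no loss because positivity of $\alpha$ already controls every low-frequency truncation of $\varphi$ by $\Vert\varphi\Vert_{\mathcal{C}^\alpha}$ with an absolute constant.
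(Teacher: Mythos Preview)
The paper does not actually prove this lemma; it simply cites \cite[Theorem~2.85]{BCD11}. Your paraproduct argument is exactly the proof that underlies that citation, so in spirit you and the paper take the same route.

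There is, however, one genuine slip. The blocks $\delta_n$ defined in this appendix are \emph{sharp} Fourier cutoffs, for which $\Vert\delta_n\Vert_{L^\infty\to L^\infty}\sim n$ (the Lebesgue constant of the difference of two Dirichlet kernels), whereas the paraproduct calculus in \cite{BCD11} uses smooth blocks, for which this operator norm is bounded uniformly in $n$. Your assertion that ``no Lebesgue-constant logarithm appears'' is correct for the low-frequency sum $\sum_{i\leq j-2}\Vert\delta_i\varphi\Vert_{\mathcal{C}^0}\lesssim\Vert\varphi\Vert_{\mathcal{C}^\alpha}$, but it is \emph{not} correct at the step where you apply the sharp projector $\delta_n$ to the product $S_{j-2}\varphi\cdot\delta_j\psi$: that step costs a factor of order $n$, and in the region $i\leq j-2$ you have no spare decay $2^{-n\alpha}$ to absorb it (in the other two regions you do, so they are fine as written). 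The clean fix is the one your final sentence already gestures at: run the paraproduct with smooth Littlewood--Paley blocks --- which is precisely what the cited reference does --- and then pass back via the equivalence of the smooth and sharp $\mathcal{B}^\beta_{\infty,\infty}$ norms.
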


The proof of this result can be found in \cite[Theorem 2.85]{BCD11}.


\section{Regularity properties of the semigroups}
\label{appx:heat_sg}

In this appendix we list some properties of the heat semigroup $S_t = e^{t\Delta}$, defined as a convolution on the circle $\mathbb{T}$ with the heat kernel \eqref{eq:heat_kernel}, and the approximate heat semigroup $S^{(\eps)}_t = e^{t\Delta_\varepsilon}$, which is defined as a convolution with the approximate heat kernel \eqref{eq:approx_heat_kernel}. 

The following Lemma provides the regularising property of the heat semigroup $S_t$ in the H\"{o}lder spaces.

\begin{lemma}
\label{lem:S_reg}
Let $\alpha < \beta$, $\beta \geq 0$, then for $t >0$ one has $\Vert S_t \Vert_{\mathcal{C}^\alpha \rightarrow \mathcal{C}^\beta} \lesssim t^{\frac{\alpha - \beta}{2}}$.
\end{lemma}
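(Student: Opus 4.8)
The plan is to reduce everything to the Paley--Littlewood blocks $\delta_n$ of Appendix~\ref{sec:distributions}: for $\beta\in(0,1)$ the H\"older space $\mathcal{C}^\beta$ coincides with $\mathcal{B}^\beta_{\infty,\infty}$ (see \cite{BCD11}), and since $S_t u$ is smooth for $t>0$, larger $\beta$ can be handled by bounding the genuine derivatives $\partial_x^j S_t u$. The operator $S_t$ is the Fourier multiplier with symbol $k\mapsto e^{-tk^2}$, so it commutes with each $\delta_n$, and the crux is the dyadic bound
\begin{equ}
\Vert \delta_n S_t\Vert_{\mathcal{C}^0\to\mathcal{C}^0}\lesssim e^{-c\,t\,2^{2n}}\;,\qquad \Vert \partial_x\delta_n S_t\Vert_{\mathcal{C}^0\to\mathcal{C}^0}\lesssim 2^{n}\,e^{-c\,t\,2^{2n}}\;,
\end{equ}
valid for $n\ge1$ and some fixed $c>0$; the block $n=0$ is trivial since $\delta_0 S_t u=\delta_0 u$.

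To prove the dyadic bound I would fix a smooth bump $\tilde\chi$ on $\real$ equal to $1$ on $\{\tfrac12\le|\xi|<1\}$ and supported in $\{\tfrac14\le|\xi|\le2\}$, and let $\tilde\delta_n$ be the Fourier multiplier with symbol $\tilde\chi(2^{-n}\,\cdot)$, so that $\delta_n=\delta_n\tilde\delta_n$. Then $\delta_n S_t u=\delta_n\big(K_{t,n}*u\big)$, where $K_{t,n}$ is (a fixed multiple of) the function on $\mathbb{T}$ with Fourier coefficients $m_{t,n}(k):=\tilde\chi(2^{-n}k)\,e^{-tk^2}$, and since the $\mathcal{C}^0\to\mathcal{C}^0$ norm of a convolution operator is controlled by the $L^1$-norm of its kernel, it suffices to prove $\Vert K_{t,n}\Vert_{L^1(\mathbb{T})}\lesssim e^{-ct2^{2n}}$ (with an extra factor $2^n$ when $m_{t,n}$ is replaced by $ik\,m_{t,n}$). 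For this I would periodise the associated kernel on $\real$, which does not increase the $L^1$-norm, and rescale, so that $\Vert K_{t,n}\Vert_{L^1(\mathbb{T})}\lesssim\big\Vert\mathcal{F}^{-1}_{\real}\big[\tilde\chi(\cdot)\,e^{-t2^{2n}(\cdot)^2}\big]\big\Vert_{L^1(\real)}$; on the support of $\tilde\chi$ one has $|\xi|\ge\tfrac14$, so $e^{-t2^{2n}\xi^2}\le e^{-t2^{2n}/16}$, and by the Leibniz rule the derivatives up to order two of $\tilde\chi(\xi)e^{-t2^{2n}\xi^2}$ are bounded there by a polynomial in $t2^{2n}$ times $e^{-t2^{2n}/16}$; using $\Vert\mathcal{F}^{-1}g\Vert_{L^1(\real)}\lesssim\Vert g\Vert_{L^1}+\Vert g''\Vert_{L^1}$ the polynomial is absorbed into a slightly smaller exponent $c>0$. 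This is the step I expect to require the most care, though it is routine; the version with $ik\,m_{t,n}$ is identical and produces the claimed extra $2^n$ after rescaling.

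Granting the dyadic bound, for $u\in\mathcal{C}^\alpha$ and $\beta\in(0,1)$ I would estimate, for $n\ge1$,
\begin{equ}
2^{\beta n}\Vert\delta_n S_t u\Vert_{\mathcal{C}^0}\lesssim 2^{\beta n}e^{-ct2^{2n}}\Vert\delta_n u\Vert_{\mathcal{C}^0}\lesssim 2^{(\beta-\alpha)n}e^{-ct2^{2n}}\Vert u\Vert_{\mathcal{C}^\alpha}\;,
\end{equ}
and combine it with $\Vert\delta_0 S_t u\Vert_{\mathcal{C}^0}=\Vert\delta_0 u\Vert_{\mathcal{C}^0}\le\Vert u\Vert_{\mathcal{C}^\alpha}$; taking the supremum over $n\ge0$ and using, since $\beta-\alpha>0$, the elementary bound $\sup_{n\ge0}2^{(\beta-\alpha)n}e^{-ct2^{2n}}\lesssim t^{(\alpha-\beta)/2}$ (the maximiser sitting at $2^{2n}\sim t^{-1}$; as the lemma is only applied for bounded $t$, the $n=0$ term is likewise $\lesssim t^{(\alpha-\beta)/2}$), one arrives at $\Vert S_t u\Vert_{\mathcal{C}^\beta}\lesssim t^{(\alpha-\beta)/2}\Vert u\Vert_{\mathcal{C}^\alpha}$. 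For $\beta\ge1$ the same scheme works with the second half of the dyadic bound (and its analogues with $(ik)^j$, $j\le\lfloor\beta\rfloor$) used to control $\partial_x^j S_t u$ and the H\"older seminorm of $\partial_x^{\lfloor\beta\rfloor}S_t u$, the supremum being replaced by the geometric sum $\sum_{n\ge0}2^{(\beta-\alpha)n}e^{-ct2^{2n}}\lesssim t^{(\alpha-\beta)/2}$; alternatively, one reduces to the case $\beta\in(0,1)$ via $S_t=S_{t/2}S_{t/2}$. The case $\alpha\le0$ requires no change, since only the Besov norm of $u$ enters the argument.
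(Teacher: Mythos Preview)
Your argument is correct and in fact more self-contained than what the paper does. The paper gives no proof of its own: it refers to \cite[Lemma~47]{GIP12} for the case $\alpha\ge0$, $\beta\le\alpha+1$, asserts that the case $\alpha\le0$ with integer $\beta$ is ``easy'' from the definition, and then fills in the remaining range by interpolation in $\beta$ and by iterating the semigroup property $S_t=S_{t/2}S_{t/2}$. Your approach instead produces the bound in one stroke via the Littlewood--Paley description of $\mathcal{C}^\beta$ and the dyadic multiplier estimate, which is the standard Besov-space proof of this type of smoothing bound.

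Two minor comments. First, the statement $\Vert\delta_n S_t\Vert_{\mathcal{C}^0\to\mathcal{C}^0}\lesssim e^{-ct2^{2n}}$ is slightly stronger than what you actually prove and use: with the paper's \emph{sharp} blocks $\delta_n$, the literal operator norm on $\mathcal{C}^0$ carries a logarithmic factor, but your argument really yields $\Vert\delta_n S_t u\Vert_{\mathcal{C}^0}\lesssim e^{-ct2^{2n}}\Vert\delta_n u\Vert_{\mathcal{C}^0}$ (via $\delta_n S_t u=K_{t,n}*\delta_n u$), and that is exactly what you invoke afterwards, so there is no gap. Second, the lemma allows $\beta=0$; this case is covered by the same computation, summing $\sum_{n\ge0}\Vert\delta_n S_t u\Vert_{\mathcal{C}^0}\lesssim\sum_{n\ge0}2^{-\alpha n}e^{-ct2^{2n}}\Vert u\Vert_{\mathcal{C}^\alpha}\lesssim t^{\alpha/2}\Vert u\Vert_{\mathcal{C}^\alpha}$, which you essentially already wrote down for $\beta\ge1$. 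The upshot is that your direct Littlewood--Paley proof avoids the external reference and the interpolation step in exchange for the (routine) $L^1$ multiplier estimate you sketched.
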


For $\alpha \leq 0$ and integer $\beta$, one can easily show this bound by the definition of the H\"{o}lder spaces. For non-integer $\beta$ the bound follows by interpolation. A proof of the Lemma for $\alpha \geq 0$ and $\beta \leq \alpha + 1$ can be found in \cite[Lemma 47]{GIP12}. For larger values of $\beta$, the estimate can be shown by using the semigroup property of $S_t$.

The following results provide the regularizing properties of the approximate semigroup $S_\varepsilon$, defined  in the beginning of Section \ref{sec:approx}. All the missing proofs can be found in \cite[Section 6]{HMW14}. 
We assume that Assumption~\ref{as:first} holds in order to derive these bounds.
First, we give a bound on the difference between $S_t$ and $S^{(\eps)}_t$.

\begin{lemma}
\label{lem:S_S_eps}
Let $\lambda \in [0,1]$ and $\alpha \leq \gamma+\lambda$. Then for $\kappa > 0$ sufficiently small and $t > 0$ one has $\Vert S_t - S^{(\eps)}_t \Vert_{\mathcal{C}^{\alpha} \rightarrow \mathcal{C}^{\gamma}} \lesssim t^{-\frac{1}{2}(\gamma - \alpha + \lambda + \kappa)} \varepsilon^{\lambda}$.
\end{lemma}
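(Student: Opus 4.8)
The plan is to work on the Fourier side. Since $S_t - S^{(\eps)}_t$ is the Fourier multiplier with symbol $\sigma^\varepsilon_t(k) := e^{-tk^2} - e^{-tk^2 m(\varepsilon k)}$, $k \in \intgr$, and since the H\"older/Besov spaces are characterised by the Paley--Littlewood blocks of Appendix~\ref{sec:distributions}, it suffices to (i) establish a pointwise bound on $\sigma^\varepsilon_t$ carrying the factor $\varepsilon^\lambda$, and (ii) feed it into a Littlewood--Paley estimate. Concretely, I would first prove that, for a constant $c \in (0,1)$ depending only on $c_m$,
\[
\bigl|\sigma^\varepsilon_t(k)\bigr| \;\lesssim\; \varepsilon^\lambda\,|k|^\lambda\,e^{-c t k^2}
\]
uniformly over $\varepsilon \in (0,1]$, $t > 0$ and $k \in \intgr$.

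For this symbol estimate I would split according to whether $|\varepsilon k| \le \delta$ or $|\varepsilon k| > \delta$, with $\delta$ from Assumption~\ref{as:first}. Using $m \ge c_m$ together with the elementary inequality $|e^{-a} - e^{-b}| \le |a-b|\,e^{-\min(a,b)}$, in the first regime one gets $|\sigma^\varepsilon_t(k)| \le t k^2\,|m(\varepsilon k) - 1|\,e^{-c_m t k^2}$; since $m$ is $\mathcal{C}^1$ near $0$ with $m(0) = 1$ one has $|m(\varepsilon k)-1| \lesssim |\varepsilon k| \wedge 1 \lesssim |\varepsilon k|^\lambda$ (this is where $\lambda \in [0,1]$ is used), and absorbing the polynomial factor $t k^2$ into a slightly weaker exponential gives the claim. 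In the second regime one simply bounds $|\sigma^\varepsilon_t(k)| \le e^{-tk^2} + e^{-tk^2 m(\varepsilon k)} \le 2 e^{-c_m t k^2}$ and uses that $|\varepsilon k| > \delta$ forces $1 \le (|\varepsilon k|/\delta)^\lambda$; this also painlessly covers the case $m(\varepsilon k) = +\infty$.

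To pass from the symbol bound to the operator bound, decompose $(S_t - S^{(\eps)}_t)\psi = \sum_{n\ge 0} \delta_n(S_t - S^{(\eps)}_t)\psi$ and note that $\delta_n(S_t - S^{(\eps)}_t)\psi = K_n^{\varepsilon,t} * \delta_n\psi$, where $K_n^{\varepsilon,t}$ is the trigonometric polynomial with coefficients $\sigma^\varepsilon_t(k)\mathbf 1_{2^{n-1}\le|k|<2^n}$. Estimating $\|K_n^{\varepsilon,t}\|_{L^1(\mathbb{T})}$ by combining the pointwise bound $|\sigma^\varepsilon_t| \lesssim \varepsilon^\lambda 2^{n\lambda} e^{-ct2^{2n}}$ on the annulus with an Abel-summation argument (exploiting the bounded variation of $k \mapsto e^{-tk^2}$ and, via Assumption~\ref{as:first}, of $k \mapsto e^{-tk^2 m(\varepsilon k)}$ on the annulus) and the Dirichlet-kernel bound of Lemma~\ref{lem:D_bound}, one obtains $\|K_n^{\varepsilon,t}\|_{L^1} \lesssim \varepsilon^\lambda 2^{n(\lambda+\kappa)} e^{-ct2^{2n}}$ for any $\kappa>0$; the logarithmic loss $2^{n\kappa}$ is the price paid for the merely $\mathcal{C}^1$/$\BV$ regularity of $m$, and this step is handled precisely as in the companion estimates for $S^{(\eps)}_t$ in \cite[Section~6]{HMW14}. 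Consequently $\|\delta_n(S_t - S^{(\eps)}_t)\psi\|_{\mathcal{C}^0} \lesssim \varepsilon^\lambda 2^{n(\lambda+\kappa-\alpha)} e^{-ct2^{2n}}\|\psi\|_{\mathcal{C}^\alpha}$, whence
\[
\bigl\|(S_t - S^{(\eps)}_t)\psi\bigr\|_{\mathcal{C}^\gamma} \;\lesssim\; \varepsilon^\lambda\,\|\psi\|_{\mathcal{C}^\alpha}\,\sup_{n\ge 0} 2^{n(\gamma-\alpha+\lambda+\kappa)} e^{-ct2^{2n}}.
\]
The hypothesis $\alpha \le \gamma + \lambda$ makes the exponent $\gamma-\alpha+\lambda+\kappa$ nonnegative, so the supremum is dominated by $\sup_{z\ge1} z^{\gamma-\alpha+\lambda+\kappa}e^{-ctz^2} \lesssim t^{-\frac12(\gamma-\alpha+\lambda+\kappa)}$ for every $t>0$, which is the asserted bound after renaming $\kappa$. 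The main obstacle is the combination of the uniform-in-$\varepsilon$ symbol estimate with the $\kappa$-loss $L^1$ bound on the localised kernels, both of which rest on Assumption~\ref{as:first}; everything else is bookkeeping with the Besov characterisation and Lemma~\ref{lem:S_reg}-type scaling.
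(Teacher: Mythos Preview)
The paper does not prove this lemma in the text; it simply refers to \cite[Section~6]{HMW14} for the argument, and your sketch is precisely the kind of Fourier--multiplier\,/\,Littlewood--Paley proof one expects to find there (indeed you invoke the same reference for the $L^1$ kernel bound). Your symbol estimate $|\sigma^\varepsilon_t(k)| \lesssim \varepsilon^\lambda |k|^\lambda e^{-ctk^2}$ via the $|\varepsilon k|\lessgtr\delta$ split is correct, and the subsequent block-wise bound leading to the $t^{-\frac12(\gamma-\alpha+\lambda+\kappa)}$ factor is the standard reduction; so your proposal is correct and aligned with the paper's (deferred) approach.
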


The following result is analogous to the regularisation property of the heat semigroup.

\begin{lemma}
\label{lem:S_eps}
For any $\gamma, \bar{\gamma} \geq 0$, any $t > 0$ and any $\kappa > 0$ sufficiently small one has $\sup_{\varepsilon \in (0,1)} \Vert S^{(\eps)}_t \Vert_{\mathcal{C}^{\bar{\gamma}} \rightarrow \mathcal{C}^{\bar{\gamma} + \gamma - \kappa}} \lesssim t^{-\frac{\gamma}{2}}$.
\end{lemma}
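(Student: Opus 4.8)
The plan is to argue directly with Littlewood–Paley blocks, reducing the claimed operator bound on Hölder spaces to an $L^1$-estimate on the dyadically localised kernels of $S^{(\eps)}_t$. A first, natural attempt would be to write $S^{(\eps)}_t = M^{(\varepsilon)}_t \circ S_{c_m t}$, where $M^{(\varepsilon)}_t$ has Fourier multiplier $e^{-tk^2(m(\varepsilon k)-c_m)} \in (0,1]$, and to combine the smoothing bound for $S_{c_m t}$ from Lemma~\ref{lem:S_reg} with a uniform boundedness statement for $M^{(\varepsilon)}_t$ on Hölder spaces; the obstacle is that $m$ is only of bounded variation, not smooth, so $M^{(\varepsilon)}_t$ is not a classical Mikhlin-type multiplier, and this is precisely the reason the loss $\kappa > 0$ in the Hölder exponent appears. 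We therefore proceed directly. Throughout we may assume $t \le 1$, which is the only regime needed.

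First, since $S^{(\eps)}_t$ and each block $\delta_n$ are Fourier multipliers on $\mathbb{T}$, they commute, and applying $S^{(\eps)}_t$ does not enlarge Fourier support. Writing $\beta = \bar\gamma + \gamma - \kappa$ and using the Littlewood–Paley characterisation of $\mathcal{C}^\beta$, for $\psi \in \mathcal{C}^{\bar\gamma}$ one has
\[
\Vert S^{(\eps)}_t \psi \Vert_{\mathcal{C}^\beta} = \sup_{n \geq 0} 2^{n\beta} \Vert S^{(\eps)}_t \delta_n \psi \Vert_{\mathcal{C}^0} \leq \Big( \sup_{n \geq 0} 2^{n(\gamma - \kappa)} \Vert L^{(\varepsilon,n)}_t \Vert_{L^1(\mathbb{T})} \Big) \Vert \psi \Vert_{\mathcal{C}^{\bar\gamma}}\;,
\]
where $L^{(\varepsilon,n)}_t$ denotes the function on $\mathbb{T}$ with Fourier coefficients $\mathbf{1}_{2^{n-1} \leq |k| < 2^n}\, e^{-t k^2 m(\varepsilon k)}$ (for $n = 0$ only $k = 0$ contributes), and we used $\Vert S^{(\eps)}_t \delta_n \psi\Vert_{\mathcal{C}^0} \leq \Vert L^{(\varepsilon,n)}_t\Vert_{L^1}\Vert\delta_n\psi\Vert_{\mathcal{C}^0}$ together with $\Vert \delta_n\psi\Vert_{\mathcal{C}^0} \leq 2^{-n\bar\gamma}\Vert\psi\Vert_{\mathcal{C}^{\bar\gamma}}$. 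It thus remains to prove $\sup_{n} 2^{n(\gamma-\kappa)}\Vert L^{(\varepsilon,n)}_t\Vert_{L^1} \lesssim t^{-\gamma/2}$, uniformly in $\varepsilon \in (0,1)$.

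The heart of the matter is the kernel estimate. Substituting $x = \varepsilon k$ and $s = t/\varepsilon^2$, the symbol of $L^{(\varepsilon,n)}_t$ becomes the restriction to the $n$th dyadic block of $k \mapsto b_{s}(\varepsilon k)$, where $b_s(x) = \exp(-x^2 m(x)s)$ is exactly the family from Assumption~\ref{as:first}, which is uniformly bounded in $s>0$ in the BV norm; moreover $m \geq c_m$ gives the pointwise bound $|b_s(x)| \leq e^{-c_m s x^2}$. Estimating $\Vert L^{(\varepsilon,n)}_t\Vert_{L^1}$ by comparison with the Fourier transform of the corresponding symbol on $\real$ (equivalently, by summation by parts against Dirichlet-type kernels, using Lemma~\ref{lem:D_bound} and the uniform BV bound), one obtains
\[
\Vert L^{(\varepsilon,n)}_t \Vert_{L^1(\mathbb{T})} \lesssim (n+1)\, \big(1 \wedge e^{-c_m t 2^{2n-2}}\big)\;,
\]
uniformly in $\varepsilon \in (0,1)$. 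The factor $n+1$ is the logarithmic price of the sharp cut-off defining $\delta_n$ together with the mere bounded-variation regularity of $m$; it is what forces $\kappa > 0$.

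Finally one splits the supremum over $n$. In the regime $t 2^{2n} \leq 1$ one uses $2^{n(\gamma-\kappa)}(n+1) \lesssim 2^{n\gamma} \leq t^{-\gamma/2}$; in the regime $t 2^{2n} > 1$ one writes $u = t 2^{2n} > 1$ and uses $2^{n(\gamma-\kappa)}(n+1)\, e^{-c_m t 2^{2n-2}} \lesssim 2^{n\gamma} e^{-c_m u/4} = u^{\gamma/2} e^{-c_m u/4}\, t^{-\gamma/2} \lesssim t^{-\gamma/2}$, since polynomials are dominated by decaying exponentials. Combining the two regimes gives the claim. The main obstacle is the middle step: obtaining the kernel bound uniformly in $\varepsilon \in (0,1)$ while still capturing the Gaussian-type decay in $t$, which is exactly what dictates working with the rescaled family $b_{t/\varepsilon^2}$ and invoking the uniform BV bound of Assumption~\ref{as:first} in place of any smoothness of $m$.
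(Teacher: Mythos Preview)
The paper itself does not give a proof of this lemma, deferring instead to \cite[Section~6]{HMW14}, so there is no in-paper argument to compare against directly. Your overall strategy---reducing the operator bound to an $L^1$ estimate on the dyadically localised kernels via the Littlewood--Paley description of $\mathcal{C}^\alpha$---is sound, and the final splitting into the regimes $t2^{2n}\le 1$ and $t2^{2n}>1$ is exactly right. The bound $\Vert L^{(\varepsilon,n)}_t\Vert_{L^1}\lesssim n+1$, uniformly in $t$ and $\varepsilon$, does follow from Abel summation together with the uniform BV bound on $b_{t/\varepsilon^2}$ and the logarithmic $L^1$ growth of the partial Dirichlet sums.

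The genuine gap is the second half of your kernel estimate, namely $\Vert L^{(\varepsilon,n)}_t\Vert_{L^1}\lesssim (n+1)\,e^{-c_m t\,2^{2n-2}}$. Abel summation gives
\[
\Vert L^{(\varepsilon,n)}_t\Vert_{L^1}\lesssim n\Bigl(\sup_{k\in\text{block}}|a_k|+\sum_{k\in\text{block}}|a_{k+1}-a_k|\Bigr),
\]
and while the pointwise bound $|a_k|\le e^{-c_m t\,2^{2(n-1)}}$ controls the first term, the second is only bounded by the total variation of $b_{t/\varepsilon^2}$ on the interval $[\varepsilon 2^{n-1},\varepsilon 2^n]$. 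Assumption~\ref{as:first} gives a \emph{uniform} bound $\sup_s|b_s|_{\BV}<\infty$ on all of $\real$, but says nothing about the variation of $b_s$ on a dyadic interval far from the origin being comparable to $\sup|b_s|$ there; a BV function bounded by $\delta$ on an interval can still have variation of order one on that interval. So the product form $(n+1)\,e^{-c_m t\,2^{2n-2}}$ does not follow from the tools you invoke. The obvious substitutes---for instance the crude bound $\Vert L^{(\varepsilon,n)}_t\Vert_{L^1}\lesssim\sum_{k\in\text{block}}e^{-c_m tk^2}$ or the $L^2$ estimate---carry an extra factor of $2^{n}$ or $2^{n/2}$, which in the large-$n$ regime converts to a worse power of $t$ (roughly $t^{-(\gamma+1)/2}$ or $t^{-(\gamma+1/2)/2}$) and the argument collapses.

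One way to close the gap without leaving your framework is simply to observe that, for $t\le 1$, Lemma~\ref{lem:S_eps} is a direct consequence of Lemmas~\ref{lem:S_reg} and~\ref{lem:S_S_eps}: taking $\lambda=0$ and $\kappa'\le\kappa$ in Lemma~\ref{lem:S_S_eps} yields $\Vert S_t-S^{(\eps)}_t\Vert_{\mathcal{C}^{\bar\gamma}\to\mathcal{C}^{\bar\gamma+\gamma-\kappa}}\lesssim t^{-(\gamma-\kappa+\kappa')/2}\le t^{-\gamma/2}$, uniformly in $\varepsilon$, and combining with $\Vert S_t\Vert_{\mathcal{C}^{\bar\gamma}\to\mathcal{C}^{\bar\gamma+\gamma-\kappa}}\lesssim t^{-(\gamma-\kappa)/2}$ from Lemma~\ref{lem:S_reg} gives the claim by the triangle inequality.
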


\endappendix

\bibliographystyle{Martin}
\bibliography{bibliography}

\begin{thebibliography}{FGGR13}
\expandafter\ifx\csname url\endcsname\relax
  \def\url#1{\texttt{#1}}\fi
\expandafter\ifx\csname urlprefix\endcsname\relax\def\urlprefix{URL }\fi
\expandafter\ifx\csname href\endcsname\relax
  \def\href#1#2{#2}\fi
\expandafter\ifx\csname burlalt\endcsname\relax
  \def\burlalt#1#2{\href{#2}{\texttt{#1}}}\fi

\bibitem[AG06]{AG06}
\textsc{A.~Alabert} and \textsc{I.~Gy{\"o}ngy}.
\newblock On numerical approximation of stochastic {B}urgers' equation.
\newblock In \emph{From stochastic calculus to mathematical finance},  1--15.
  Springer, Berlin, 2006.

\bibitem[BCD11]{BCD11}
\textsc{H.~Bahouri}, \textsc{J.-Y. Chemin}, and \textsc{R.~Danchin}.
\newblock \emph{Fourier analysis and nonlinear partial differential equations},
  vol. 343 of \emph{Grundlehren der Mathematischen Wissenschaften}.
\newblock Springer, Heidelberg, 2011.
\newblock
  \burlalt{doi:10.1007/978-3-642-16830-7}{http://dx.doi.org/10.1007/978-3-642-16830-7}.

\bibitem[BJ13]{BJ13}
\textsc{D.~Bl{\"o}mker} and \textsc{A.~Jentzen}.
\newblock Galerkin approximations for the stochastic {B}urgers equation.
\newblock \emph{SIAM J. Numer. Anal.} \textbf{51}, no.~1, (2013), 694--715.

\bibitem[Che54]{Che54}
\textsc{K.-T. Chen}.
\newblock Iterated integrals and exponential homomorphisms.
\newblock \emph{Proc. London Math. Soc. (3)} \textbf{4}, (1954), 502--512.

\bibitem[DG01]{DG01}
\textsc{A.~M. Davie} and \textsc{J.~G. Gaines}.
\newblock Convergence of numerical schemes for the solution of parabolic
  stochastic partial differential equations.
\newblock \emph{Math. Comp.} \textbf{70}, no. 233, (2001), 121--134.

\bibitem[DPDT94]{DPDT94}
\textsc{G.~Da~Prato}, \textsc{A.~Debussche}, and \textsc{R.~Temam}.
\newblock Stochastic {B}urgers' equation.
\newblock \emph{NoDEA Nonlinear Differential Equations Appl.} \textbf{1},
  no.~4, (1994), 389--402.

\bibitem[DPZ02]{DPZ}
\textsc{G.~Da~Prato} and \textsc{J.~Zabczyk}.
\newblock \emph{Second order partial differential equations in {H}ilbert
  spaces}, vol. 293 of \emph{London Mathematical Society Lecture Note Series}.
\newblock Cambridge University Press, Cambridge, 2002.
\newblock
  \burlalt{doi:10.1017/CBO9780511543210}{http://dx.doi.org/10.1017/CBO9780511543210}.

\bibitem[FGGR13]{FGGR13}
\textsc{P.~K. Friz}, \textsc{B.~Gess}, \textsc{A.~Gulisashvili}, and
  \textsc{S.~Riedel}.
\newblock {Jain-Monrad criterion for rough paths and applications} (2013).
\newblock \burlalt{arXiv:1307.3460v2}{http://arxiv.org/abs/1307.3460v2}.

\bibitem[FH14]{Book}
\textsc{P.~K. Friz} and \textsc{M.~Hairer}.
\newblock \emph{A course on rough paths}.
\newblock Universitext. Springer, Cham, 2014.
\newblock With an introduction to regularity structures.
\newblock
  \burlalt{doi:10.1007/978-3-319-08332-2}{http://dx.doi.org/10.1007/978-3-319-08332-2}.

\bibitem[FV10]{FV10}
\textsc{P.~Friz} and \textsc{N.~Victoir}.
\newblock \emph{Multidimensional stochastic processes as rough paths. Theory
  and applications}, vol. 120 of \emph{Cambridge Studies in Advanced
  Mathematics}.
\newblock Cambridge University Press, Cambridge, 2010.

\bibitem[GIP12]{GIP12}
\textsc{M.~Gubinelli}, \textsc{P.~Imkeller}, and \textsc{N.~Perkowski}.
\newblock Paracontrolled distributions and singular {PDE}s (2012).
\newblock \burlalt{arXiv:1210.2684}{http://arxiv.org/abs/1210.2684}.

\bibitem[Gub04]{Gub04}
\textsc{M.~Gubinelli}.
\newblock Controlling rough paths.
\newblock \emph{J. Funct. Anal.} \textbf{216}, no.~1, (2004), 86--140.

\bibitem[Gub10]{Gub10}
\textsc{M.~Gubinelli}.
\newblock Ramification of rough paths.
\newblock \emph{J. Differential Equations} \textbf{248}, no.~4, (2010),
  693--721.

\bibitem[Gy{\"o}98a]{Gyo98a}
\textsc{I.~Gy{\"o}ngy}.
\newblock Existence and uniqueness results for semilinear stochastic partial
  differential equations.
\newblock \emph{Stochastic Process. Appl.} \textbf{73}, no.~2, (1998),
  271--299.

\bibitem[Gy{\"o}98b]{Gyo98b}
\textsc{I.~Gy{\"o}ngy}.
\newblock Lattice approximations for stochastic quasi-linear parabolic partial
  differential equations driven by space-time white noise. {I}.
\newblock \emph{Potential Anal.} \textbf{9}, no.~1, (1998), 1--25.

\bibitem[Gy{\"o}99]{Gyo99}
\textsc{I.~Gy{\"o}ngy}.
\newblock Lattice approximations for stochastic quasi-linear parabolic partial
  differential equations driven by space-time white noise. {II}.
\newblock \emph{Potential Anal.} \textbf{11}, no.~1, (1999), 1--37.

\bibitem[Hai09]{Hai09}
\textsc{M.~Hairer}.
\newblock An introduction to stochastic {PDE}s (2009).
\newblock \burlalt{arXiv:0907.4178}{http://arxiv.org/abs/0907.4178}.

\bibitem[Hai11]{Hai11a}
\textsc{M.~Hairer}.
\newblock Rough stochastic {PDE}s.
\newblock \emph{Comm. Pure Appl. Math.} \textbf{64}, no.~11, (2011),
  1547--1585.

\bibitem[HM12]{HM12}
\textsc{M.~Hairer} and \textsc{J.~Maas}.
\newblock A spatial version of the {I}t\^o-{S}tratonovich correction.
\newblock \emph{Ann. Probab.} \textbf{40}, no.~4, (2012), 1675--1714.

\bibitem[HMW14]{HMW14}
\textsc{M.~Hairer}, \textsc{J.~Maas}, and \textsc{H.~Weber}.
\newblock Approximating rough stochastic {PDE}s.
\newblock \emph{Comm. Pure Appl. Math.} \textbf{67}, no.~5, (2014), 776--870.

\bibitem[HW13]{HW10}
\textsc{M.~Hairer} and \textsc{H.~Weber}.
\newblock Rough {B}urgers-like equations with multiplicative noise.
\newblock \emph{Probab. Theory Related Fields} \textbf{155}, no. 1-2, (2013),
  71--126.

\bibitem[Kal02]{Kal02}
\textsc{O.~Kallenberg}.
\newblock \emph{Foundations of modern probability}.
\newblock Probability and its Applications (New York). Springer-Verlag, New
  York, second ed., 2002.

\bibitem[LCL07]{LCL07}
\textsc{T.~Lyons}, \textsc{M.~Caruana}, and \textsc{T.~L{\'e}vy}.
\newblock \emph{Differential equations driven by rough paths}, vol. 1908 of
  \emph{Lecture Notes in Mathematics}.
\newblock Springer, Berlin, 2007.

\bibitem[LQ02]{LQ02}
\textsc{T.~Lyons} and \textsc{Z.~Qian}.
\newblock \emph{System control and rough paths}.
\newblock Oxford Mathematical Monographs. Oxford University Press, Oxford,
  2002.
\newblock
  \burlalt{doi:10.1093/acprof:oso/9780198506485.001.0001}{http://dx.doi.org/10.1093/acprof:oso/9780198506485.001.0001}.

\bibitem[Lyo98]{Lyo98}
\textsc{T.~J. Lyons}.
\newblock Differential equations driven by rough signals.
\newblock \emph{Rev. Mat. Iberoamericana} \textbf{14}, no.~2, (1998), 215--310.

\bibitem[Nel73]{Nel73}
\textsc{E.~Nelson}.
\newblock The free {M}arkoff field.
\newblock \emph{J. Functional Analysis} \textbf{12}, (1973), 211--227.

\bibitem[Reu93]{Reu93}
\textsc{C.~Reutenauer}.
\newblock \emph{Free {L}ie algebras}, vol.~7 of \emph{London Mathematical
  Society Monographs. New Series}.
\newblock The Clarendon Press, Oxford University Press, New York, 1993.
\newblock Oxford Science Publications.

\bibitem[Tel73]{Tel73}
\textsc{S.~A. Teljakovski{\u\i}}.
\newblock A certain sufficient condition of {S}idon for the integrability of
  trigonometric series.
\newblock \emph{Mat. Zametki} \textbf{14}, (1973), 317--328.

\bibitem[Wal86]{Walsh}
\textsc{J.~B. Walsh}.
\newblock An introduction to stochastic partial differential equations.
\newblock In \emph{\'{E}cole d'\'et\'e de probabilit\'es de {S}aint-{F}lour,
  {XIV}---1984}, vol. 1180 of \emph{Lecture Notes in Math.},  265--439.
  Springer, Berlin, 1986.

\bibitem[You36]{You36}
\textsc{L.~C. Young}.
\newblock An inequality of the {H}\"older type, connected with {S}tieltjes
  integration.
\newblock \emph{Acta Math.} \textbf{67}, no.~1, (1936), 251--282.

\end{thebibliography}

\end{document}